\numberwithin{equation}{section}
\newtheorem{theorem}{Theorem}[section]
\newtheorem{lemma}{Lemma}[section]
\theoremstyle{remark}
\newtheorem{remark}{Remark}[section]
\providecommand{\abs}[1]{\lvert #1\rvert}
\newcommand{\nc}{\newcommand}
\nc{\vb}{\mathbf{v}}
\nc{\bx}{\mathbf{x}}
\nc{\by}{\mathbf{y}}
\nc{\bz}{\mathbf{z}}
\nc{\bu}{\mathbf{u}}
\nc{\bv}{\mathbf{v}}
\nc{\ba}{\mathbf{a}}
\nc{\bs}{\mathbf{s}}
\nc{\bq}{\mathbf{q}}
\nc{\bd}{\mathbf{d}}
\nc{\bb}{\mathbf{b}}
\nc{\bc}{\mathbf{c}}
\nc{\bi}{\mathbf{i}}
\nc{\bfr}{\mathbf{r}}
\nc{\bA}{\mathbf{A}}
\nc{\R}{\mathbb R}
\nc{\N}{\mathbb N}
\nc{\C}{\mathbb C}
\nc{\D}{\mathbb D}
\nc{\Z}{\mathbb Z}
\nc{\F}{\mathbf F}
\nc{\bbS}{\mathbb S}
\nc{\B}{\cal B}
\nc{\br}{\bigr}
\nc{\bl}{\bigl}
\nc{\Bl}{\Bigl}
\nc{\Br}{\Bigr}
\nc{\ind}{\mathbf{1}}
\nc{\bP}{\mathbf{P}}
\DeclareMathOperator*{\cl}{cl}  
\title{On the  metastability of a  loss network with diminishing rates}
\author{Anatolii A. Puhalskii }
\begin{document}
\maketitle
\sloppy
\vspace{1.cm}

%Anatolii A. Puhalskii \footnote{Email: puhalski@iitp.ru}\\
% Institute for Problems in Information
%Transmission,
% Moscow, Russia
%\end{center}
%\vspace{.5cm}

\begin{abstract}
A trajectorial large deviation principle is established in a mean field
thermodynamic limit for a
multiclass loss
network with diminishing rates,
which may have several stable equilibria. The large deviation limit
 is identified as a solution to a maxingale
problem with a Markov property. 
 The
invariant measure of the network process   obeys a large deviation
principle as well. The network is 
 metastable in that it spends exponentially long periods
of time in the neighbourhoods of  stable equilibria. 
A specific case
of  a two--class network with two stable equilibria and one unstable equilibrium
is examined.   \end{abstract}
\begin{flushleft}
\mbox{MSC: 60F10, 60F17}
\end{flushleft}

\begin{flushleft}
\mbox{Key words:  large deviations; stochastic networks;
 metastability; invariant measures }
\end{flushleft}
\begin{flushleft}
\mbox{Short title: on the metastability}
\end{flushleft}

\section{Introduction}
\label{sec:introduction}

The following model of a cellular network was studied in
Antunes et al.  \cite{Rob08}. There are $n$ nodes of capacity $C$
each. Customers  of
$K$ classes   arrive  at the nodes according to
Poisson processes of respective rates $\alpha_k$\,, $1\le k\le K$\,. On arrival
at a node,
a class $k$ customer occupies $A_k$ units of the node's 
capacity, being rejected
and removed from the network if
  the required capacity
is  not available. On acceptance, the
customer stays at the node for an exponentially distributed length of time
with mean $1/\gamma_k$ and then moves to another node, a destination
 being chosen uniformly at random. As on arrival, 
rejection  occurs within the network
when less than
 $A_k$ units of unused capacity are available at a destination node.
A class $k$ customer may also leave
 the network after  an exponentially distributed 
length of time  of mean
$1/\delta_k$\,.
The arrival processes,
  sojourn times at the nodes,  sojourn times in
the network
 and routing decisions
are independent.   

Antunes et al.  \cite{Rob08}
obtained a law of large numbers for the process of the proportions of
nodes with a given population, as the number of nodes goes to
infinity. 
They analysed stability properties of
the limit dynamical system and showed that it may have several stable 
equilibria.
In Tibi \cite{Tib10}, it was argued that the network process
would spend exponentially
long periods of time in the neighbourhoods of stable equilibria, in
analogy with the developments in Freidlin and Wentzell \cite{wf2},
implying that the network is metastable.
Unfortunately, the analysis in Tibi \cite{Tib10} is not complete. 

As observed in Tibi \cite{Tib10}, this model stands out because
mean field behaviour arises in the limit only, 
the interactions within the network  being local.
 Usually, when
mean--field models are considered, the mean-field
interaction is built in the hypotheses.
In a similar vein, in the available literature
multistability of a  dynamical system, resulting in
metastability,   is assumed extraneously, for the most part,
  whereas in this model
it is an intrinsic feature, too.

To elucidate the contribution of this paper, one needs to
put things in a precise setting and review  the results in
Antunes et al. \cite{Rob08} in more detail.
The state of node $i$ at time $t$ is described by the vector
$X^{(n)}_i(t)=(X^{(n)}_{i,1}(t),\ldots, X^{(n)}_{i,K}(t))$\,, whose $k$-th
entry records the number of class $k$ customers at the node.
The process $X^{(n)}_i=(X^{(n)}_i(t)\,, t\ge0)$ takes values in the set
 $\Theta=\{\theta=(\theta_1,\ldots,\theta_K)\in\Z_+^K:\,
\sum_{k=1}^K\theta_kA_k\le C\}$\,. It is assumed that
$\abs{\Theta}\ge2$\,, $\abs{\Theta}$ denoting the cardinality of
$\Theta$\,.
Let $Y^{(n)}_\theta(t)$ represent the proportion of nodes with
$\theta$ as the population vector, i.e.,
\begin{equation*}
   Y^{(n)}_\theta(t)=\frac{1}{n}\,\sum_{i=1}^{n}\ind_{\{X^{(n)}_i(t)=\theta\}}
\end{equation*}
and  let $Y^{(n)}(t)=(Y^{(n)}_\theta(t)\,,\theta\in \Theta)$\,, where
$\ind_{\Xi}$ denotes the indicator of event $\Xi$\,.
As \begin{equation*}
    \sum_{\theta\in \Theta}Y^{(n)}_\theta(t)=1\,,
\end{equation*}
the  process $Y^{(n)}=(Y^{(n)}(t)\,, t\ge0)$ is a Markov process with
 values
 in the discrete simplex 
${\mathbb S}_{\abs{\Theta}}^{(n)}=\{y=(y_\theta,\,\theta\in \Theta):\,\sum_{\theta\in \Theta}y_\theta=1\,, y_\theta\ge0\,, ny_\theta\text{ is
  an integer}\}$\,.
Let  ${\mathbb S}_{\abs{\Theta}}=\{y=(y_\theta\,,\theta\in \Theta):\,\sum_{\theta\in \Theta}y_\theta=1\,, y_\theta\ge0\}$\,.
It follows from the results in Antunes et al. \cite{Rob08}
 that if the sequence $Y^{(n)}(0)$ converges
 in probability to $\hat y\in \mathbb
 S_{\abs{\Theta}}$\,, as $n\to\infty$\,, then the sequence 
$Y^{(n)}$ converges in
probability  uniformly over compact  intervals to the solution
$\by=(\by(t)\,, t\ge0)$ of the initial value problem
\begin{equation}
  \label{eq:27}
  \dot\by_\theta(t)=V_\theta(\by(t))
\end{equation}
and $\by(0)=\hat y\,,$
where
 $\by(t)=(\by_\theta(t)\,,\theta\in\Theta)\in\mathbb S_{\abs{\Theta}}$
and, for $y=(y_\theta\,,\theta\in\Theta)\in\mathbb S_{\abs{\Theta}}$\,, 
\begin{multline*}
    V_\theta(y)=\sum_{k=1}^K\bl((\alpha_k+\sum_{\theta'\in\Theta}\theta'_k\gamma_ky_{\theta'})y_{\theta-e_k}+(\delta_k+\gamma_k)(\theta_k+1)y_{\theta+e_k}\\-
(\alpha_k+(\delta_k+\gamma_k)\theta_k+\sum_{\theta'\in\Theta}\theta'_k\gamma_ky_{\theta'})y_\theta\br)\,,
\end{multline*}
with an  overdot denoting a time derivative,
 $e_k$ denoting the $k$th  vector of the
canonical basis of $\R^K$ and with the convention that
 $y_{\theta\pm e_k}=0$ if $\theta\pm e_k\notin\Theta$\,.

 Both 
$Y^{(n)}(t)$ and  $\by(t)$ are probability distributions on $\Theta$\,.
The equilibrium points of \eqref{eq:27} are given by an Erlang formula
for the stationary distribution of an $M/M/C/C$ queue.
More specifically, for $\rho=(\rho_1,\ldots,\rho_K)\in\R_+^K$\,,
let a probability distribution
$\nu(\rho)=(\nu_\theta(\rho)\,,\theta\in\Theta)$
on $\Theta$ be defined as
\begin{equation}
  \label{eq:48}
  \nu_\theta(\rho)=\frac{1}{Z(\rho)}\,
\prod_{k=1}^K\frac{\rho_k^{\theta_k}}{\theta_k!}\,,
\end{equation}
 $Z(\rho)$ being a normalising
constant. If, for $k=1,2,\ldots,K$\,,
\begin{equation}
  \label{eq:53}
  \rho_k=\frac{\alpha_k+\gamma_k\sum_{\theta\in\Theta}\theta_k\nu_\theta(\rho)}{\gamma_k+\delta_k}\,,
\end{equation}
then  $y=\nu(\rho)$ is an equilibrium of \eqref{eq:27}. Every
equilibrium is of this form. 
The existence of  solutions to \eqref{eq:48} and \eqref{eq:53}  is proved via an application
of  Brouwer's fixed point theorem.    
On the other hand,  uniqueness
of an equilibrium  for \eqref{eq:27}
might not hold and in Antunes et al. \cite{Rob08}  an example of a network with no less than two  stable
equilibria is provided, so, metastability is likely to occur.

  An essential
 stepping stone 
 toward proving  metastability is to derive a trajectorial large
 deviation principle (LDP) for the sequence of
 $Y^{(n)}$
as 
  random elements of the associated Skorohod space.
 General results on large
 deviations of Markov processes in Freidlin and Wentzell \cite{wf2}
 and
in Wentzell \cite{Wen90} fall short. A major sticking point is
what is known as the phenomenon of
  ''diminishing rates'', see Shwartz and Weiss \cite{SchWei05}: 
 near the
boundary of the state space the normalised  transition rates get vanishingly small, e.g., 
the transitions $y\to
y+(f_{\theta+e_{k-1}}-f_\theta)/n$\,, which correspond to departures
of class $k$ customers from nodes with population vector $\theta$\,,
occur at the rate
$ny_{\theta}\theta_k\delta_k$\,, which, when
divided by $n$\,, tends to $0$ as
$y_\theta\to0$\,, where 
$f_\theta$ denotes the $\theta$-th vector of the canonical basis of
 $\R^{\Theta}$\,. The line of attack in this paper is to
prove $\C$--exponential tightness of the sequence
of distributions of $Y^{(n)}$ 
  and to identify
a large deviation (LD) limit point as a solution to a maxingale problem,
cf., Puhalskii \cite{Puh01}. The issue of diminishing rates is
tackled by approximating trajectories that  reach the boundary of the
state space with trajectories that stay away from the boundary.
In the process, some new techniques are developed, e.g.,
the LD limit point is shown to have a Markov property which enables
one to identify it piecewise.

The trajectorial LDP is called upon, at first, in order  to obtain
 an LDP for the invariant measure of $Y^{(n)}$\,,
which is done by applying   the results in
Puhalskii \cite{Puh21}. Secondly,
following the developments in Freidlin
and Wentzell \cite{wf2} and in Shwartz and Weiss \cite{SchWei95},
 logarithmic asymptotics of both  exit
times   from the neighbourhoods of stable equilibria
and of the moments of the exit times are  obtained,
 thus establishing metastability.
 As an illustration, 
 a two--class
metastable  network is looked at,  which is similar to the one   in Antunes et
al. \cite{Rob08}. 

Here is how this paper is organised.
 The trajectorial LDP is stated and proved in
Section \ref{sec:proof-theor-refth}. Section
\ref{sec:large-devi-invar} is concerned with the LDP for the invariant
measure of  $Y^{(n)}$ and  the metastability. The paper uses
extensively the terminology and techniques of large deviation
convergence as expounded upon in Puhalskii \cite{Puh01}. A primer
is  available at the beginning of Section 3 in Puhalskii \cite{Puh21}.
\section{The trajectorial LDP }
\label{sec:proof-theor-refth}
 Let,
 for
 $y=(y_\theta\,,\theta\in\Theta)\in\R^\Theta,\,
z=(z_\theta\,,\theta\in\Theta)\in\R^\Theta,$ and
$\lambda=(\lambda_\theta\,,\theta\in\Theta)\in\R^\Theta$\,,
 \begin{multline}
   \label{eq:13}
   H(y,\lambda)=\sum_{k=1}^K\sum_{\theta\in\Theta_k^+}
(e^{\lambda_{\theta+e_k}-\lambda_{\theta}}-1)
\alpha_k y_\theta\\+\sum_{k=1}^K\sum_{\theta\in \Theta^-_k}
(e^{\lambda_{\theta-e_k}-\lambda_{\theta}}-1)
(\delta_k+
\gamma_k\sum_{\theta'\in\Theta\setminus\Theta^+_k}
y_{\theta'})\theta_ky_{\theta}
\\+\sum_{k=1}^K\sum_{\theta'\in \Theta^+_k,\,\theta\in\Theta^-_k}
(e^{\lambda_{\theta'+e_k}-\lambda_{\theta'}+\lambda_{\theta-e_k}-\lambda_{\theta}}-1)
\theta_k\gamma_k y_\theta y_{\theta'}
\end{multline}
and
\begin{equation}
  \label{eq:11}
  L(y,z)=\sup_{\lambda\in\R^\Theta}\bl(\lambda\cdot z-H(y,\lambda)\br)\,,
\end{equation}
where
$  \Theta^{\pm}_k=\{\theta\in \Theta:\,\theta\pm e_k\in\Theta\}$
and `` $\cdot$ '' is used to denote an inner product.
Let $\D(\R_+,\R^\Theta)$ denote the Skorohod space of right continuous
$\R^{\Theta}$--valued functions with lefthand limits. It is endowed
with a metric rendering it a complete separable metric space, see,
e.g., Ethier and Kurtz \cite{EthKur86}, Jacod and Shiryaev \cite{jacshir}.
 \begin{theorem}
\label{the:LDP}
Let $y^{(n)}\in\mathbb S^{(n)}_\Theta$\,, $y\in {\mathbb
  S}_{\abs{\Theta}}$\,, and
 $y^{(n)}\to y$ as $n\to\infty$\,. Then
the sequence $Y^{(n)}$ with $Y^{(n)}(0)=y^{(n)}$ obeys an LDP in $\D(\R_+,\R^\Theta)$  with deviation function
  \begin{equation}
    \label{eq:15}
    I_{  y}^\ast(\by)=\int_0^\infty L(\by(s),\dot\by(s))\,ds\,,
  \end{equation}
provided $\by=(\by(t)\,, t\ge0)$ is an absolutely continuous function
taking values in $\mathbb S_{\abs{\Theta}}$ with $\by(0)=y$\,, and $I^\ast_{ y}(\by)=\infty$\,, otherwise.
\end{theorem}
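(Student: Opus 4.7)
The plan is to follow the maxingale-problem route of Puhalskii \cite{Puh01}, in three steps: (i) establish $\C$--exponential tightness of the sequence $(Y^{(n)})$ in $\D(\R_+,\R^\Theta)$; (ii) identify every large-deviation accumulation point via an exponential-compensator equation with Hamiltonian $H$ from \eqref{eq:13}; and (iii) prove uniqueness of that accumulation point, which is where the diminishing-rates issue has to be dealt with.

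For step (i), the state space $\mathbb{S}_{\abs{\Theta}}$ is compact, the jumps of $Y^{(n)}$ have size $1/n$, and the normalised total jump rate is uniformly bounded because populations $\theta_k$ are capped by $C/A_k$. Applying the generator of $Y^{(n)}$ to $\exp(n\lambda\cdot y)$ for constant $\lambda\in\R^\Theta$, a direct enumeration of the arrival, departure, and routing transitions shows that $\mathcal{A}^{(n)}e^{n\lambda\cdot y}=nH(y,\lambda)e^{n\lambda\cdot y}$ exactly, so
\begin{equation*}
  \exp\Bl(n\lambda\cdot Y^{(n)}(t)-n\lambda\cdot y^{(n)}-n\int_0^tH(Y^{(n)}(s),\lambda)\,ds\Br)
\end{equation*}
is a local martingale. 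Standard exponential tail estimates on its quadratic variation then give uniform bounds on the modulus of continuity of $Y^{(n)}$, yielding $\C$--exponential tightness.

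For step (ii), the same generator calculation with time-dependent $\lambda\in C^1_b(\R_+,\R^\Theta)$ shows that
\begin{equation*}
  \exp\Bl(n\lambda(t)\cdot Y^{(n)}(t)-n\lambda(0)\cdot y^{(n)}-n\int_0^t\bl(\dot\lambda(s)\cdot Y^{(n)}(s)+H(Y^{(n)}(s),\lambda(s))\br)ds\Br)
\end{equation*}
is an exponential local martingale. Passing to the large-deviation limit in the sense of \cite{Puh01}, every accumulation deviability $\Pi$ on $\D(\R_+,\R^\Theta)$ satisfies the associated maxingale inequality. Varying $\lambda$ and invoking Legendre duality \eqref{eq:11}, the linear growth in $\lambda$ forces $\Pi$ to be supported on absolutely continuous paths $\by$ in $\mathbb{S}_{\abs{\Theta}}$ with $\by(0)=y$, and its deviation function to be bounded below by $I_y^\ast(\by)=\int_0^\infty L(\by(s),\dot\by(s))\,ds$.

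The matching upper bound together with uniqueness of the accumulation point is the main obstacle, because of the diminishing-rates phenomenon. When some component $y_\theta$ vanishes, all rates in \eqref{eq:13} that produce jumps out of state $\theta$ vanish with it, so $H(y,\cdot)$ loses strict convexity on the boundary and $L(y,\cdot)$ can take the value $+\infty$ in directions leading out of the simplex along the boundary; in particular, the naive exponential change of measure is not well defined there. The plan is to approximate any absolutely continuous $\by$ with $I_y^\ast(\by)<\infty$ by a family of perturbed paths $\by^\varepsilon$ that lie uniformly in the relative interior of $\mathbb{S}_{\abs{\Theta}}$ on each compact interval, constructed so that $\by^\varepsilon\to\by$ uniformly and $I_y^\ast(\by^\varepsilon)\to I_y^\ast(\by)$ as $\varepsilon\downarrow 0$; the natural candidate mixes $\by$ with a fixed strictly interior reference path and uses the convexity of $L$ in its second argument together with a time mollification. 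For interior paths, a standard exponential tilt of the Poisson rates by the maximiser of $\lambda\cdot\dot\by(s)-H(\by(s),\lambda)$ in \eqref{eq:11} produces the lower bound directly. Finally, to pass from this local picture to the full identification of $\Pi$, one verifies that any accumulation deviability enjoys a Markov property in the sense of \cite{Puh01}, which allows $\Pi$ to be identified on short intervals on which boundary effects are controlled and then glued by successive conditioning. Combined with $\C$--exponential tightness, this uniqueness yields the full LDP with deviation function $I_y^\ast$.
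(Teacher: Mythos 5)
Your overall architecture is the same as the paper's: stochastic exponential/maxingale identification of LD limit points after $\C$--exponential tightness (Theorem 5.1.5 of Puhalskii), the upper bound $-\ln\Pi_y(\by)\ge I^\ast_y(\by)$ from the general maxingale-problem comparison, identification for interior trajectories, approximation of boundary-touching trajectories by interior ones, and gluing via a Markov property of the limit deviability. (A small inaccuracy along the way: the generator identity $\mathcal{A}^{(n)}e^{n\lambda\cdot y}=nH(y,\lambda)e^{n\lambda\cdot y}$ is not exact; the migration transitions carry factors $n/(n-1)$ and an exclusion of self-routing, so the prelimit cumulant only converges to $H$; this is harmless but should not be asserted as exact.)

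The genuine gap is in the one step that carries the whole difficulty: proving $I^\ast(\by^\varepsilon)\to I^\ast(\by)$ for your interior approximations. You propose to take $\by^\varepsilon$ as a convex mixture of $\by$ with a fixed interior reference path and to invoke ``convexity of $L$ in its second argument together with a time mollification''. Convexity in $z$ alone does not control the simultaneous change of the first argument, and $L$ is not jointly convex here: because of the migration term $\sum_k\sum_{\theta,\theta'}(e^{\lambda_{\theta'+e_k}-\lambda_{\theta'}+\lambda_{\theta-e_k}-\lambda_\theta}-1)\theta_k\gamma_k y_\theta y_{\theta'}$, the Hamiltonian $H(y,\lambda)$ is quadratic, not concave, in $y$, so the inequality $L(\by^\varepsilon,\dot\by^\varepsilon)\le(1-\varepsilon)L(\by,\dot\by)+\varepsilon L(\cdot,\cdot)$ is unavailable and nothing in your argument supplies an integrable majorant for $L(\by^\varepsilon(s),\dot\by^\varepsilon(s))$ as $\varepsilon\downarrow0$ near times where some $\by_\theta(s)=0$. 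The paper's way around this is a nontrivial a priori bound on the optimising $\lambda$ in \eqref{eq:11} (Lemma \ref{le:bound}), showing that the relevant exponentials are bounded by $C_1+C_2\sum_{\theta''}\abs{z_{\theta''}}$; this is what makes a dominated-convergence/Fatou argument work, and even then only on time intervals short enough that the largest coordinate stays bounded away from zero, which is why the final proof cuts trajectories into pieces with $\int\sum_\theta\abs{\dot\by_\theta}\le1/(3\abs{\Theta})$ and glues with the Markov property. Two further ingredients you would also need and do not mention: (a) your approximations (like the paper's) change the initial point, so you need upper semicontinuity of $\Pi_y(\by)$ jointly in $(y,\by)$ (Lemma \ref{le:usc}) to transfer the identity at $\by^\varepsilon(0)$ back to $y$; (b) for interior trajectories the identification is not just a ``standard tilt'': one needs attainment of the supremum in \eqref{eq:11} and local integrability of the optimisers so that the characteristic equation $\dot\by=\nabla_\lambda H(\by,\lambda)$ has a unique solution (again via Lemma \ref{le:bound}), which is what the uniqueness theorems for the maxingale problem require. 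As it stands, the core analytic content of the theorem is missing from your proposal.
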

\begin{remark}
  More explicitly, the theorem asserts that the sets $\{\by\in 
\D(\R_+,\R^\Theta):\,I_{  y}^\ast(\by)\le \beta\}$ are compact for
  all $\beta\ge0$ and that, for any  Borel set $W\subset\D(\R_+,\R^\Theta)$
  such that $\inf_{\by\in
  \text{int}\, W}I_y^\ast(\by)=\inf_{\by\in
  \text{cl}\, W}I_y^\ast(\by)$\,, 
$(1/n)\ln\mathbf P(Y^{(n)}\in W)\to-\inf_{\by\in
  W}I_y^\ast(\by)$\,, as $n\to\infty$\,, where $\text{int}$ and $\text{cl}$ denote the
interior and closure of a set, respectively.
\end{remark}
\begin{remark}
 The limit of the law of large numbers in \eqref{eq:27} follows 
with
$\dot\by(t)=\nabla_\lambda H(\by(t),\lambda)\big|_{\lambda=0}$\,.
In addition, $I_{y}^\ast(\by')=0$ if and only if $\by'=\by$\,, with 
 $\by(0)=y$\,.
\end{remark}

 A proof outline is provided next.
The process $Y^{(n)}$ is a jump semimartingale. The jumps can be of
several kinds:    exogenous class $k$ arrivals at  nodes with population vector
$\theta$ result in jumps $f_{\theta+e_k}/n-f_{\theta}/n$\,,
departures of class $k$ customers from nodes with population vector
$\theta$ produce jumps $f_{\theta-e_k}/n-f_{\theta}/n$\,, whereas
class $k$ customer migrations  from 
nodes $\theta$ to nodes $\theta'$
 give rise to jumps
$f_{\theta'+e_k}/n-f_{\theta'}/n+
f_{\theta-e_k}/n-f_{\theta}/n$\,.
Let 
\begin{equation*}
  \mu^{(n)}([0,t],\Gamma)=\sum_{0<s\le t}\ind_{\{\Delta Y^{(n)}(s)\in\Gamma\}}
\end{equation*}
represent the measure of jumps of $Y^{(n)}$\,, where 
$\Delta Y^{(n)}(s)=Y^{(n)}(s)-Y^{(n)}(s-)$\,, with $Y^{(n)}(s-)$ denoting
the lefthand limit of $Y^{(n)}$ at $s$ and $\Gamma$ standing
for a Borel subset of $\R^\Theta\setminus\{ 0\}\,$\,.
Then, assuming that $\theta\in\Theta^+_k$\,, $\theta\in\Theta^-_k$
and $\theta'\in\Theta^+_k$ on the lefthand sides below, where relevant,
\begin{align*}
    \mu^{(n)}([0,t],\{\frac{f_{\theta+e_k}}{n}-\frac{f_{\theta}}{n}\})
=\int_0^t\sum_{i=1}^{n}
\ind_{\{X^{(n)}_i(s-)=\theta\}}
\,d  N^{(n)}_{i,k}(s)\,,\\
    \mu^{(n)}([0,t],\{\frac{f_{\theta-e_k}}{n}-\frac{f_{\theta}}{n}\})=
\int_0^t\sum_{i=1}^{n}\ind_{\{X^{(n)}_i(s-)=\theta\}}
\sum_{j=1}^\infty\ind_{\{j\le
  X^{(n)}_{i,k}(s-)\}}d  L^{(n)}_{i,k,j}(s)\\
+\int_0^t\sum_{\theta'\in\Theta\setminus Theta^+_k}\sum_{i=1}^{n}\sum_{i'=1}^n\ind_{\{i'\not=i\}}\ind_{\{X^{(n)}_i(s-)=\theta\}}
\ind_{\{X^{(n)}_{i'}(s-)=\theta'\}}
\sum_{j=1}^\infty\ind_{\{j\le
  X^{(n)}_{i,k}(s-)\}}\ind_{\{\xi_{i,k,j}^{(n)}(s)=i'\}}\,d 
R^{(n)}_{i,k,j}(s)\,,\\
  \mu^{(n)}([0,t],\{\frac{f_{\theta'+e_k}}{n}-\frac{f_{\theta'}}{n}+
\frac{f_{\theta-e_k}}{n}-\frac{f_{\theta}}{n}\})\\=
\int_0^t\sum_{i=1}^{n}\sum_{i'=1}^n\ind_{\{i'\not=i\}}\ind_{\{X^{(n)}_i(s-)=\theta\}}
\ind_{\{X^{(n)}_{i'}(s-)=\theta'\}}
\sum_{j=1}^\infty\ind_{\{j\le
  X^{(n)}_{i,k}(s-)\}}\ind_{\{\xi_{i,k,j}^{(n)}(s)=i'\}}\,d 
R^{(n)}_{i,k,j}(s)\,,
\end{align*}
where the $N^{(n)}_{i,k}$\,, $L^{(n)}_{i,k,j}$ and
$R^{(n)}_{i,k,j}$ are  independent  Poisson processes of respective
rates $\alpha_k$\,, $\delta_k$ and $\gamma_k$\,, which are responsible
for customer arrivals, departures and migrations, respectively,
and the $\xi_{i,k,j}^{(n)}(s)$ are
independent random variables  uniformly distributed
in $\{1,2,\ldots,n\}\setminus\{i\}$\,, which are responsible for
 reroutings from node $i$ and which are  independent of the Poisson processes.
The compensators of these measures of jumps relative to the natural
filtration are as follows,
\begin{subequations}
  \begin{align*}
    \nu^{(n)}([0,t],\{\frac{f_{\theta+e_k}}{n}-\frac{f_{\theta}}{n}\})
=n
\int_0^t Y^{(n)}_\theta(s)
\alpha_k\,ds\,,\\
    \nu^{(n)}([0,t],\{\frac{f_{\theta-e_k}}{n}-\frac{f_{\theta}}{n}\})=
n\int_0^tY^{(n)}_{\theta}(s)
\notag\theta_k\delta_k\,ds
\\+\frac{n^2}{n-1}\,\sum_{\theta'\in\Theta}\int_0^tY^{(n)}_{\theta}(s)Y^{(n)}_{\theta'}(s)
\ind_{\{\theta'\in \Theta\setminus \Theta^+_k\}}
\notag\theta_k\gamma_kds
-\frac{n}{n-1}\,\int_0^tY^{(n)}_\theta(s)
\ind_{\{\theta\in\Theta\setminus \Theta^+_k\}}
\theta_k\,\gamma_kds\,, \\
  \nu^{(n)}([0,t],\{\frac{f_{\theta'+e_k}}{n}-\frac{f_{\theta'}}{n}+
\frac{f_{\theta-e_k}}{n}-\frac{f_{\theta}}{n}\})=\frac{n^2}{n-1}
\int_0^tY^{(n)}_\theta(s)Y^{(n)}_{\theta'}(s)\theta_k\,\gamma_k
\,ds\notag\\-
\frac{n}{n-1}
\int_0^tY^{(n)}_\theta(s)
\ind_{\{\theta'=\theta\}}\theta_k\,\gamma_k\,ds\,.
\end{align*}
\end{subequations}
Therefore,
the stochastic cumulant of
$Y^{(n)}$\,, as defined by
 (4.1.14) on p.293 in Puhalskii \cite{Puh01},
 is 
\begin{multline*}
    G^{(n)}_t(\lambda)=\int_0^t\int_{\R^\Theta}(e^{\lambda\cdot u}-1)\nu^{(n)}(ds,du)=
\sum_{k=1}^K\sum_{\theta\in\Theta^+_k}
(e^{(\lambda_{\theta+e_k}-\lambda_\theta)/n}-1)
n\int_0^tY^{(n)}_\theta(s)
\alpha_k\,ds
\\+\sum_{k=1}^K\sum_{\theta\in\Theta^-_k}(e^{(\lambda_{\theta-e_k}-\lambda_\theta)/n}-1)
\bl(n\int_0^tY^{(n)}_{\theta}(s)
\theta_k\delta_k\,ds
+\frac{n^2}{n-1}\sum_{\theta'\in\Theta}\int_0^tY^{(n)}_{\theta}(s)Y^{(n)}_{\theta'}(s)
\ind_{\{\theta'\in\Theta\setminus \Theta^+_k\}}
\theta_k\gamma_kds\\
-\frac{n}{n-1}\int_0^tY^{(n)}_\theta(s)
\ind_{\{\theta\in\Theta\setminus \Theta^+_k\}}
\theta_k\,\gamma_kds\br)+\sum_{k=1}^K
\sum_{\theta\in\Theta^-_k,\,\theta'\in\Theta^+_k}(e^{(\lambda_{\theta'+e_k}-\lambda_{\theta'}
+\lambda_{\theta-e_k}-\lambda_{\theta})/n}-1)\\
\Bl(\frac{n^2}{n-1}
\int_0^tY^{(n)}_\theta(s)Y^{(n)}_{\theta'}(s)\theta_k\,\gamma_k
\,ds-
\frac{n}{n-1}
\int_0^tY^{(n)}_\theta(s)
\ind_{\{\theta'=\theta\}}\theta_k\,\gamma_k
\,ds\Br)\,.\end{multline*}

 The process $Y^{(n)}$ satisfies
 the hypotheses of  Theorem 5.1.5 on
p.357 in Puhalskii \cite{Puh01}.
 In some more detail, since $Y^{(n)}$
is a continuous--time process,    condition $(\sup \mathcal{E})$ on
p.357 in Puhalskii \cite{Puh01} need be checked with
$
\mathcal{E}^{n}_t(\lambda)=e^{G^{(n)}_t( \lambda)}$\,, see (4.1.15) on
p.293 in Puhalskii \cite{Puh01}. 
Recalling that  $Y^{(n)}_\theta(t)$  takes values
in $[0,1]$ implies that the condition in question holds with
\begin{equation*}
  G_t(\lambda;\by)=\int_0^tH(\by(s),\lambda)\,ds
\,.
\end{equation*}
By Theorem 5.1.5 on p.357 in Puhalskii \cite{Puh01}, the sequence
$Y^{(n)}$ is $\C$--exponentially tight in $\D(\R_+,\R^{\Theta})$ and 
its every LD limit point solves
 maxingale problem $( y,G)$\,.

Let  deviability 
$\Pi_y=(\Pi_y(W)\,,W\subset\D(\R_+,\R^\Theta))$ represent an
LD limit point of  $Y^{(n)}$ (recall that   $y^{(n)}\to y$)\,, i.e.,
$\Pi_y(W)\in[0,1]$\,, $\Pi_y(\emptyset)=0$\,, $\Pi_y(
\D(\R_+,\R^\Theta))=1$\,, $\Pi_y(W)=\sup_{\by\in W}\Pi_y(\by)$\,,
 sets
$\{\by\in\D(\R_+,\R^\Theta):\,\Pi_y(\by)\ge\beta\}$ are compact
for $\beta\in(0,1]$\,, where $\Pi_y(\by)=\Pi_y(\{\by\})$\,, and the
sequence of the distributions of $Y^{(n)}$ obeys a subsequential LDP
with deviation function $-\ln\Pi_y(\by)$\,, see Puhalskii
\cite{Puh01,Puh21}.
 Then, $\Pi_y(\by)=0$ unless $\by\in
\C(\R_+,\R^\Theta)$\,, $\by(0)=y$\,, and
$\exp\bl(\lambda\cdot(\by(t)-y)-G_t(\lambda;\by)\br)$ is a local exponential
maxingale in $\C(\R_+,\R^\Theta)$\,, as defined in Puhalskii \cite{Puh01}, where
 $\C(\R_+,\R^\Theta)$ denotes the subset
of continuous functions of
$\D(\R_+,\R^\Theta)$ 
 with the subspace topology which
is the topology of locally uniform convergence. 

 By Lemma 2.7.11 on p.174 in Puhalskii \cite{Puh01},
\begin{equation}
  \label{eq:45a}
  \Pi_y(\by)\le\Pi_y^\ast(\by)\,,
\end{equation}
where 
\begin{equation}
  \label{eq:46a}
  \Pi^\ast_y(\by)=
e^{-I^\ast_y(\by)}\,.
\end{equation}
It is being proved that, in fact,  in \eqref{eq:45a} equality holds.
By \eqref{eq:45a} and \eqref{eq:46a}, it may be assumed that 
$\Pi^\ast_y(\by)>0$\,.
It is  immediate 
that $\Pi_y^\ast(\by)=0$ unless $\by\in\mathbb{ S}_{\abs{\Theta}}$
 so that when proving the equality it may and will be assumed
that $\by(t)=(\by_\theta(t)\,,\theta\in\Theta)\in{\mathbb S}_{\abs{\Theta}}$\,, for all $t$\,.
The equality in \eqref{eq:45a}   is proved, at first, for the case
where $\by$ stays away from the boundary of $\mathbb
S_{\abs{\Theta}}$ so that
$\by_\theta(t)>0$\,, for all $\theta$ and $t$\,, see 
 Lemma \ref{le:identify 1} below.  
Furthermore, if
$\by_\theta(s)>0$ for all $s\in[0,t]$ and one 
 defines, in analogy with  pp.210, 212 in Puhalskii \cite{Puh01}, 
\begin{equation*}
      I^\ast_{y,t}(\by)=\int_0^t
L(\by(s),\dot\by(s))\,ds\,,
 \end{equation*}
provided $\by$ is absolutely continuous, $\by(0)= y$ and 
$\by(s)\in \mathbb S_{\abs{\Theta}}$\,, 
and $I_{y,t}^\ast(\by)=\infty$\,, otherwise, and lets
$\Pi_{y,t}^\ast(\by)=e^{-I^\ast_{y,t}(\by)}$\,, then
$\Pi_y(p_t^{-1}(p_t\by))=\Pi_{y,t}^\ast(\by)$\,, where
$p_t\by=(\by(s\wedge t)\,, s\ge0)$\,, with $u\wedge v=\min(u,v)$\,. 
In order to tackle the case of  trajectories $\by$ that reach the boundary
of the state space, one needs to find trajectories
$\by^\epsilon$ that are locally
 bounded away from zero entrywise and converge to $\by$
locally uniformly, 
 as $\epsilon\to0$\,, such that
\begin{equation}
  \label{eq:54}
\int_0^t L(\by^\epsilon(s),\dot\by^\epsilon(s))\,ds
\to\int_0^t L(\by(s),\dot\by(s))\,ds\,.
\end{equation}
The hard part in the proof of \eqref{eq:54}
is verifying  the hypotheses of  Lebesgue's dominated
convergence theorem.
The needed majoration for the $L(\by^\epsilon(s),\dot\by^\epsilon(s))$
is obtained through the use of a nontrivial bound on the optimisers in  \eqref{eq:11}, see Lemma \ref{le:bound}.
 Upper semicontinuity of $\Pi_y(\by)$ in
 $(y,\by)$ is also important and novel, see Lemma \ref{le:usc}.
 Nevertheless, even then the convergence in \eqref{eq:54} is proved
 for values of $t$ that are not too great. In
order to finish the proof of Theorem \ref{the:LDP}, an arbitrary
trajectory is cut into pieces, for each of which 
$\Pi_y(p_t^{-1}(p_t\by))=\Pi_{y,t}^\ast(\by)$\,,
and
 a Markov property of $\Pi_y$ is used in order to obtain
the needed equality $\Pi_y(\by))=\Pi_{y}^\ast(\by)$\,.

Next,  the 
groundwork is laid by establishing some properties of the function
$L(y,z)$\,.  
Given $\theta,\theta'\in\Theta$\,, 
  a sequence $\theta_0,\ldots,\theta_\ell$ of elements of $\Theta$ is called  a path from
 $\theta$ to $\theta'$ provided $\theta_0=\theta$\,,
 $\theta_\ell=\theta'$ and either 
$\theta_{i+1}=\theta_i+ e_{k_i}$
or $\theta_{i+1}=\theta_i- e_{k_i}$\,, for some
 $k_i$\,, for all  $i\in\{0,1,2,\ldots,\ell-1\}$\,. For $y\in \mathbb S_{\abs{\Theta}}$\,, 
it is said that points $\theta$ and $\theta'$  $y$--communicate if
$y_\theta>0$\,, $y_{\theta'}>0$ and  there
exists a path from $\theta$ to $\theta'$ such that
$y_{\tilde\theta}>0$\,, for every $\tilde\theta$ on the path. The
communication relation is an equivalence relation. The
equivalence classes are denoted by $\Theta_1(y)\,,\ldots,\Theta_{m(y)}(y)$\,.
\begin{lemma}
\label{le:equiv}Let
$y\in\mathbb S_{\abs{\Theta}}$ and  $z\in \R^\Theta$\,.
Suppose that   $z_\theta=0$ when $y_\theta=0$\,.
Then $L(y,z)<\infty$ if and only if $\sum_{\theta\in
  \Theta_i(y)}z_\theta=0$\,, for each $i=1,2,\ldots,m(y)$\,.
If, furthermore,  $y_\theta>0$\,, for all $\theta$\,, then
  supremum in
\eqref{eq:11} is attained.
 \end{lemma}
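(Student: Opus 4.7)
The plan is to recognize that $H(y,\cdot)$ has the form $\sum_j c_j(y)(e^{\lambda\cdot v_j}-1)$, a sum over transition types $j$ (arrival, departure, migration) with rates $c_j(y)\ge 0$ and displacement vectors $v_j \in \R^\Theta$ (of shapes $f_{\theta+e_k}-f_\theta$, $f_{\theta-e_k}-f_\theta$, and $f_{\theta'+e_k}-f_{\theta'}+f_{\theta-e_k}-f_\theta$ respectively). By a standard Farkas-type argument obtained by probing $\lambda$ along rays, $L(y,z)<\infty$ if and only if $z$ lies in the convex cone $\mathcal C(y)$ generated by the active displacements $\{v_j:c_j(y)>0\}$. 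I will characterize $\mathcal C(y)\cap\{z:z_\theta=0\text{ when }y_\theta=0\}$ via the class sums $s_i:=\sum_{\theta\in\Theta_i(y)}z_\theta$, showing both that necessity forces each $s_i=0$ and that these conditions are sufficient.

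For necessity, I would probe the supremum using shifts $\lambda\to\lambda+cw$. First take $w=\ind_{\Theta_i(y)}$: a case analysis over the three transition families shows $w\cdot v_j\le 0$ for every active $v_j$, since a $y$-positive neighbor of $\Theta_i(y)$ must itself lie in $\Theta_i(y)$ by maximality of the class, while a neighbor outside $\text{supp}(y)$ drops $w$ from $1$ to $0$. Thus $H(y,\lambda+cw)$ remains bounded as $c\to+\infty$, forcing $s_i=w\cdot z\le 0$. Next take $w=\ind_C$ where $C$ is a connected component of the graph $G$ on $\Theta$ having edges $\{\theta,\theta\pm e_k\}$ whenever either endpoint has $y>0$; every active displacement has its constituent edges internal to one $G$-component, so $w\cdot v_j=0$ for all active $j$ and $H$ is genuinely invariant under the shift, yielding $\sum_{\theta\in C}z_\theta=0$. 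Since $z$ vanishes outside $\text{supp}(y)$ and $C\cap\text{supp}(y)$ is a disjoint union of the $\Theta_i$'s contained in $C$, this reduces to $\sum_{\Theta_i\subseteq C}s_i=0$; combined with $s_i\le 0$ it forces $s_i=0$ for every $i$.

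For sufficiency, assume $s_i=0$ for each $i$ and build $z$ as a non-negative combination of active displacements. Within each class $\Theta_i(y)$ both arrival vectors $f_{\theta+e_k}-f_\theta$ and departure vectors $f_{\theta-e_k}-f_\theta$ between $y$-communicating neighbors are active in both orientations; by connectedness of $\Theta_i(y)$ and a standard path-decomposition to a base vertex, their non-negative combinations exhaust the zero-sum subspace of $\R^{\Theta_i(y)}$. Since $s_i=0$, the restriction $z|_{\Theta_i(y)}$ lies in this cone, and summing across $i$ expresses $z$ as a non-negative combination of active $v_j$'s, so $z\in\mathcal C(y)$ and $L(y,z)<\infty$.

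For the attainment claim when $y_\theta>0$ for all $\theta$, one has a single class $\Theta_1(y)=\Theta$ and $\sum_\theta z_\theta=0$. I would restrict $\lambda$ to the hyperplane $\{\lambda_{\theta_0}=0\}$, which is legitimate because $H$ is invariant under adding a constant to $\lambda$. On this hyperplane $H$ is strictly convex (the active arrival vectors span the zero-sum subspace by connectedness of $\Theta$) and coercive (for any nonzero $\hat\lambda$ in the hyperplane some active $v_j$ satisfies $\hat\lambda\cdot v_j>0$, since both $\pm(f_{\theta+e_k}-f_\theta)$ are active under $y>0$ everywhere, so $H(y,t\hat\lambda)\to\infty$ exponentially fast). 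The supremum is therefore attained at a unique point of this hyperplane. The principal technical hurdle is the combinatorial case analysis on the migration displacements $v_j=f_{\theta'+e_k}-f_{\theta'}+f_{\theta-e_k}-f_\theta$, where the locations of four endpoints — inside $\Theta_i(y)$, in another class, or outside $\text{supp}(y)$ — must be tracked to certify $w\cdot v_j\le 0$ in the class-sum step and $w\cdot v_j=0$ in the invariance step.
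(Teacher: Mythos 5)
Your argument is correct, and it reaches the lemma by a genuinely different route from the paper's. On necessity the two are close in spirit: the paper sends $\lambda=\Lambda\ind_{\Theta_1(y)}$ to $+\infty$ and bounds $H$ via the quantities $u_k,v_k$, which is exactly your probe $w=\ind_{\Theta_i(y)}$ together with the case analysis $w\cdot v_j\le0$; your second probe along indicators of $G$--components (equivalently, along constant shifts) is what excludes strictly negative class sums, a case the paper leaves implicit. The real divergence is in sufficiency: the paper argues by contradiction, discarding the migration cross terms, splitting $\tilde H$ into the class functionals $\tilde H_i$, normalising $\lambda$ at its within--class minimum and extracting along a path an edge difference that makes $\tilde H_i$ blow up exponentially; you instead write $z|_{\Theta_i(y)}$, which has zero sum, as a combination of the active edge differences $\pm(f_{\theta+e_k}-f_\theta)$ inside each class (both orientations being active because $\alpha_k>0$, $\delta_k>0$ and $y>0$ on the class) and conclude $z\in\mathcal{C}(y)$. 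This is cleaner and separates the combinatorics from the analysis, but one step needs shoring up: ``$z\in\mathcal{C}(y)$ implies $L(y,z)<\infty$'' is not a Farkas/ray--probing statement --- rays only give the converse ($z\notin\mathcal{C}(y)$ implies $L=\infty$, via a separating $w$ and closedness of finitely generated cones). To finish you need the standard entropy--type bound, e.g. $pq\le e^{p}+q\ln q-q$ applied termwise to a representation $z=\sum_j\mu_j v_j$ with $\mu_j\ge0$, giving $L(y,z)\le\sum_j\bl(\mu_j\ln(\mu_j/c_j(y))-\mu_j+c_j(y)\br)<\infty$; this is routine and standard for Poissonian cumulants, but it should be written out or cited, since it is precisely the half of your ``if and only if'' that carries the sufficiency claim. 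Your attainment argument (reduce to the hyperplane $\lambda_{\theta_0}=0$, note invariance under constants since $\sum_\theta z_\theta=0$, and show the objective tends to $-\infty$ along every ray because some active displacement has positive inner product with the direction) is essentially the paper's coercivity argument phrased with recession directions instead of sequences; strict convexity and uniqueness are not needed but harmless. Finally, like the paper, whose own estimate uses $\min_k(\alpha_k\wedge\delta_k)>0$, your both--orientations claim rests on the standing assumption $\alpha_k>0$ and $\delta_k>0$, which is worth stating explicitly.
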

\begin{proof}
It is proved first  that  $L(y,z)=\infty$ provided
$\sum_{z\in \Theta_1(y)}z_\theta>0$\,. Let
$\lambda_\theta=\Lambda$\,, for $\theta\in \Theta_1(y)$\,, and
$\lambda_\theta=0$\,, for $\theta\notin \Theta_1(y)$\,, where
$\Lambda\to\infty$\,.  Note that
$\sum_{\theta\in\Theta}\lambda_\theta
z_\theta=\Lambda\sum_{\theta\in\Theta_1(y)}z_\theta\to\infty$\,.
Let
\begin{subequations}
  \begin{align}
  \label{eq:5}
  u_k(y,\lambda)=\sum_{\theta\in \Theta^+_k}
e^{\lambda_{\theta+e_k}-\lambda_{\theta}}
 y_\theta\intertext{and}
v_k(y,\lambda)=\sum_{\theta\in \Theta^-_k}
e^{\lambda_{\theta-e_k}-\lambda_{\theta}}
\theta_k y_{\theta}\,.
\label{eq:5a}\end{align}
\end{subequations}
Let also
\begin{equation}
  \label{eq:8}
 \tilde  H(y,\lambda)=
\sum_{k=1}^K\Bl(\alpha_ku_k(y,\lambda)+(\delta_k+
\gamma_k\sum_{\theta'\not\in\Theta^+_k}
y_{\theta'})v_k(y,\lambda)\Br)\,.
\end{equation}
By \eqref{eq:13}, 
\begin{equation*}
    H(y,\lambda)\le
\tilde H(y,\lambda)+ \sum_{k=1}^K
\gamma_k u_k(y,\lambda)v_k(y,\lambda)\,.
\end{equation*}
Noting that if either $\theta+ e_k\in\Theta_1(y)$
or $\theta- e_k\in\Theta_1(y)$\,, whereas
$\theta\not\in\Theta_1(y)$\,, 
then $y_\theta=0$ yields
\begin{align}\label{eq:uk}
    u_k(y,\lambda)=\sum_{\theta\in \Theta_1(y):\,\theta+e_k\not\in\Theta_1(y)}
e^{-\Lambda}
 y_\theta+\sum_{\theta\in \Theta_1(y):\,\theta+e_k\in\Theta_1(y)}
 y_\theta
+\sum_{\theta\not\in\Theta_1(y):\,\theta+e_k\not\in\Theta_1(y)}
 y_\theta\intertext{and}
v_k(y,\lambda)=\sum_{\theta\in \Theta_1(y):\,\theta-e_k\not\in\Theta_1(y)}
e^{-\Lambda}
\theta_ky_{\theta}
+\sum_{\theta\in \Theta_1(y):\,\theta-e_k\in\Theta_1(y)}
\theta_ky_{\theta}+\sum_{\theta\not\in
  \Theta_1(y):\,\theta-e_k\not\in\Theta_1(y)}
\theta_ky_{\theta}\,.\notag
\end{align}
It follows that both $u_k(y,\lambda)$ and $v_k(y,\lambda)$ are bounded
  as $\Lambda\to\infty$\,, so, $H(y,\lambda)$ is bounded. Therefore, $
  \sum_{\theta\in\Theta}\lambda_\theta z_\theta-H(y,\lambda)\to\infty\,.
$

For a sufficiency proof,
suppose that $\sum_{\theta\in\Theta}\lambda_\theta
z_\theta-H(y,\lambda)\to\infty$\,, for some sequence of $\lambda$\,.
Since, by \eqref{eq:13}, \eqref{eq:5}, \eqref{eq:5a} and \eqref{eq:8},
 on recalling that $y\in\mathbb S_\Theta$\,,
\begin{equation*}
      H(y,\lambda)
\ge\tilde H(y,\lambda)-
\sum_{k=1}^K\bl(\alpha_k +
\abs{\Theta}(\delta_k+\gamma_k)
 \br)\,,
\end{equation*}
it follows that
 $\sum_{\theta\in\Theta}\lambda_\theta z_\theta-\tilde H(y,\lambda)\to\infty$\,.
Analogously to \eqref{eq:uk},
\begin{equation*}
      u_k(y,\lambda)\ge\sum_{i=1}^{m(y)}\sum_{\theta\in \Theta:\,\theta+e_k\in\Theta_i(y)}
e^{\lambda_{\theta+e_k}-\lambda_{\theta}}y_{\theta}
=\sum_{i=1}^m\sum_{\theta\in \Theta_i(y):\,\theta+e_k\in\Theta_i(y)}
e^{\lambda_{\theta+e_k}-\lambda_{\theta}}y_{\theta}\,.
\end{equation*}
Applying a similar line of reasoning to $v_k(y,\lambda)$ and
introducing
\begin{subequations}
  \begin{align}
  \label{eq:17}
  u_{i,k}(y,\lambda)=\sum_{\theta\in \Theta_i(y):\,\theta+e_k\in\Theta_i(y)}
e^{\lambda_{\theta+e_k}-\lambda_{\theta}}y_{\theta}
\,, \\
\label{eq:17b}  v_{i,k}(y,\lambda)=\sum_{\theta\in \Theta_i(y):\,\theta-e_k\in\Theta_i(y)}
e^{\lambda_{\theta-e_k}-\lambda_{\theta}}\theta_ky_{\theta}\intertext{ and }
  \tilde H_i(y,\lambda)=
\sum_{k=1}^K\Bl(\alpha_ku_{i,k}(y,\lambda)+(\delta_k+
\gamma_k\sum_{\theta'\not\in\Theta^+_k}
y_{\theta'})v_{i,k}(y,\lambda)\Br)\,,
\label{eq:17a}\end{align}
\end{subequations}
as well as recalling that $z_\theta=0$ when $y_\theta=0$\,, the latter
condition being equivalent to $\theta\notin\cup_{i=1}^m\Theta_i(y)$\,,
  obtains that
\begin{equation*}
    \sum_{\theta\in\Theta}\lambda_\theta z_\theta-\tilde H(y,\lambda)\le
\sum_{i=1}^{m(y)}(\sum_{\theta\in\Theta_i(y)}\lambda_\theta z_\theta-\tilde H_i(y,\lambda))\,.
\end{equation*}
Hence, $\sum_{\theta\in\Theta_i(y)}\lambda_\theta
z_\theta-\tilde H_i(y,\lambda)\to\infty$\,, for some $i$\,. It  is
noteworthy that
 $\tilde H_i(y,\lambda)$ depends on $\lambda$ through
$\lambda_\theta\text{ with }\theta\in\Theta_i(y)$ only\,.

Let $\tilde\theta_i$ represent an element of
$\Theta_i(y)$ with the minimal value of $\lambda_{\theta}$ over
$\theta\in\Theta_i(y)$\,. By passing to subsequences, it  may be assumed that
$\tilde\theta_i$ does not depend on $\lambda$\,.
By \eqref{eq:17}, \eqref{eq:17b}, \eqref{eq:17a} and the fact that
$\sum_{\theta\in \Theta_i(y)} z_\theta=0$, it may be (and will be)
assume that $\lambda_{\tilde\theta_i}=0$ so that $\lambda_\theta\ge0$\,,
for all $\theta\in \Theta_i(y)$\,.
There exists $\hat\theta_i\in\Theta_i(y)$\,, which may be assumed not to
depend on $\lambda$ either, such that  $z_{\hat\theta_i}>0$\,,
$\lambda_{\hat\theta_i}\to\infty$ and
$\lambda_{\hat\theta_i}z_{\hat\theta_i}-
\tilde H_i(y,\lambda)\to\infty$\,.
Let $\theta_0=\hat\theta_i,\theta_1,\ldots,\theta_\ell=\tilde\theta_i$ be
a path
in $\Theta_i(y)$ that
connects $\hat\theta_i$ and $\tilde\theta_i$\,. Since
$\lambda_{\hat\theta_i}=
\sum_{j=1}^\ell(\lambda_{\theta_{j-1}}-\lambda_{\theta_{j}})$ and
 $\lambda_{\hat\theta_i}\to\infty$\,, 
 there exists $j_0$ such that
$\lambda_{\theta_{j_0-1}}-\lambda_{\theta_{j_0}}$ tends to infinity
no slower than $\lambda_{\hat\theta_i}$\,. By \eqref{eq:17},
\eqref{eq:17b}
 and \eqref{eq:17a}, 
\begin{equation*}
     \tilde H_i(y,\lambda)\ge
e^{\lambda_{\theta_{j_0-1}}-\lambda_{\theta_{j_0}}}y_{\theta_{j_0}}
\min_{k}(
\alpha_k \wedge
\delta_k)\,.
\end{equation*}
Since $y_{\theta_{j_0}}>0$\,,
 $\lambda_{\hat\theta_i}
z_{\hat\theta_i}-\tilde H_i(y,\lambda)\to-\infty$\,. 
The contradiction proves the claim.

Suppose now that $y_\theta>0$\,, for all $\theta\in\Theta$\,.
Since $\sum_{\theta\in\Theta}z_\theta=0$\,, it may be assumed that
$\lambda_0=0$\,. If
$\abs{\lambda_{\theta}}\to\infty$\,, for certain $\theta$\,, and
$\abs{\lambda_{\hat\theta}}$ grows the fastest, then
there exist $\tilde\theta$ and $k$ such that 
either $\abs{\lambda_{\tilde\theta+e_k}-\lambda_{\tilde\theta}}$
or $\abs{\lambda_{\tilde\theta-e_k}-\lambda_{\tilde\theta}}$ tends to
infinity at the same rate or faster, which implies that either
$\abs{\lambda_{\hat\theta}z_{\hat\theta}}-
e^{\lambda_{\tilde\theta+e_k}-\lambda_{\tilde\theta}}\alpha_ky_{\tilde\theta}
-e^{\lambda_{\tilde\theta}-\lambda_{\tilde\theta+e_k}}\delta_k
y_{\tilde\theta+e_k}\to-\infty$
or $\abs{\lambda_{\hat\theta}z_{\hat\theta}}-
e^{\lambda_{\tilde\theta-e_k}-\lambda_{\tilde\theta}}\delta_ky_{\tilde\theta}-
e^{\lambda_{\tilde\theta}-\lambda_{\tilde\theta-e_k}}\alpha_k
y_{\tilde\theta-e_k}\to-\infty$\,.
Hence, supremum in \eqref{eq:11} may be taken over a bounded set, so,
it is attained.
   \end{proof}
\begin{lemma}
  \label{le:bound}
There exist $C_1$ and $C_2$ such that
if   $\lambda$ delivers  supremum in \eqref{eq:11}, then
 for  all $k$\,, $\theta\in\Theta^+_k$\,, and $\theta'\in\Theta^-_k$\,,
\begin{equation*}
    e^{\lambda_{\theta+ e_k}-\lambda_\theta}y_\theta
+e^{\lambda_{\theta'- e_k}-\lambda_{\theta'}}y_{\theta'}
+e^{\lambda_{\theta+e_k}-\lambda_{\theta}+\lambda_{\theta'-e_k}-
\lambda_{\theta'}}
  y_{\theta}y_{\theta'}\le C_1+
C_2\sum_{\theta''\in\Theta}\abs{z_{\theta''}}\,.
\end{equation*}
\end{lemma}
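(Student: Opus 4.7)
The proof exploits the first-order optimality condition at the supremum-attaining $\lambda$, namely $\nabla_\lambda H(y,\lambda)=z$. Writing $A_{\theta,k}=e^{\lambda_{\theta+e_k}-\lambda_\theta}y_\theta$ for $\theta\in\Theta^+_k$, $B_{\theta,k}=e^{\lambda_{\theta-e_k}-\lambda_\theta}y_\theta$ for $\theta\in\Theta^-_k$, and $M_{\theta,\theta',k}=A_{\theta,k}B_{\theta',k}$, the component $z_{\tilde\theta}=\partial H/\partial\lambda_{\tilde\theta}$ of the gradient becomes a Kirchhoff-type balance at the vertex $\tilde\theta$, expressing $z_{\tilde\theta}$ as a signed sum of the nonnegative modified fluxes $\alpha_k A$, $(\delta_k+\gamma_k\sum_{\theta''\notin\Theta^+_k}y_{\theta''})\theta_k B$, and $\theta_k\gamma_k M$ that touch $\tilde\theta$, with coefficients bounded by constants depending only on $\alpha_k,\delta_k,\gamma_k$ and $\abs{\Theta}$.

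The central step is to turn these equations into one-sided bounds on individual fluxes by summing over suitable subsets. For fixed $k$ and $\theta\in\Theta^+_k$, summing the balance equation over $S=\{\tilde\theta\in\Theta:\tilde\theta_k\ge\theta_k+1\}$ cancels all fluxes internal to $S$ by telescoping and leaves only the $k$-class fluxes crossing the boundary, yielding
\begin{equation*}
\sum_{\tilde\theta\in S}z_{\tilde\theta}=\sum_{\tilde\theta:\,\tilde\theta_k=\theta_k,\,\tilde\theta\in\Theta^+_k}\alpha_k A_{\tilde\theta,k}-\sum_{\tilde\theta:\,\tilde\theta_k=\theta_k+1,\,\tilde\theta\in\Theta^-_k}(\text{coeff})\, B_{\tilde\theta,k}+(\text{migration crossing}).
\end{equation*}
Nonnegativity of the fluxes and the obvious $\abs{\sum_{\tilde\theta\in S}z_{\tilde\theta}}\le\sum_{\theta''}\abs{z_{\theta''}}$ give a one-sided inequality. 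To isolate $A_{\theta,k}$ alone (rather than a whole level set), I refine the argument by summing over the ray $\{\theta+je_k:j\ge 1\}\cap\Theta$, accepting additional transverse-loss terms from $k'$-transitions ($k'\ne k$) that exit the ray.

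Assembling these inequalities over all $(k,\theta)$, together with the analogous ones for $B$'s obtained from the complementary halfspace $\{\tilde\theta_k\le\theta_k\}$, gives a finite linear system $Q\le c+\mathbf{M}Q$ on the vector $Q$ collecting all $A_{\theta,k}$'s and $B_{\theta,k}$'s, where the nonnegative vector $c$ is bounded entrywise by $C_1+C_2\sum_{\theta''}\abs{z_{\theta''}}$ and the nonnegative matrix $\mathbf{M}$ has entries depending only on rate constants. A Perron--Frobenius/Lyapunov argument using the mass conservation $\sum_\theta y_\theta=1$ and the capacity bound $\sum_k\theta_k A_k\le C$ shows $(I-\mathbf{M})^{-1}$ is nonnegative with bounded norm, hence each $A_{\theta,k}$ and $B_{\theta,k}$ is linearly bounded in $\sum_{\theta''}\abs{z_{\theta''}}$. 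For the product term $M_{\theta,\theta',k}=A_{\theta,k}B_{\theta',k}$, naive multiplication of the individual bounds is quadratic in $\sum\abs{z}$. Instead, I revisit the Kirchhoff balance at $\theta'$, where the migration flux $\theta'_k\gamma_k M_{\theta,\theta',k}$ appears with coefficient $\theta'_k\gamma_k\ge\min_k\gamma_k>0$ (using $\theta'_k\ge 1$ since $\theta'\in\Theta^-_k$); solving for $M_{\theta,\theta',k}$ using the previously established bounds on the other fluxes in that equation gives the required linear bound.

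The main obstacle is the Perron step --- establishing that $I-\mathbf{M}$ is invertible with bounded nonnegative inverse. The off-diagonal entries coming from transverse losses in the ray argument match the diagonal in order of magnitude, so contractivity does not hold coordinatewise; the Lyapunov functional has to be a weighted $\ell^1$-norm whose weights are forced by the mass-conservation identity, and its construction relies on the finiteness of $\Theta$ together with the uniform positivity of the rates $\alpha_k$ and $\delta_k$. If this global Lyapunov argument proves elusive, the fallback is an inductive argument along a carefully chosen linear ordering of pairs $(\theta,k)$ in which each bound depends only on previously bounded quantities.
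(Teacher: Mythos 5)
Your starting point is sound and, up to a point, coincides with the paper's argument: stationarity $\nabla_\lambda H(y,\lambda)=z$ is exactly what the paper encodes via the constrained reformulation \eqref{eq:38}, and your ray sums are precisely the paper's explicit Lagrange multipliers $r_{\theta,k}=\sum_{i=0}^{\theta_k}z_{\theta-ie_k}$ of \eqref{eq:61}; your final remark about solving the balance relation for the migration flux is also the same back-substitution into \eqref{eq:49} that ends the paper's proof. The genuine gap is the closing step in the middle, which you yourself flag as unresolved. The level-crossing identities you propose cannot be closed into a system $Q\le c+\mathbf{M}Q$ with $(I-\mathbf{M})^{-1}$ bounded: for fixed $k$ and threshold $m$, summing the vertex balances over $\{\tilde\theta:\tilde\theta_k\ge m+1\}$ yields a \emph{single} identity in which the upward fluxes out of level $m$ (arrival terms and migration-destination terms) and the downward fluxes out of level $m+1$ (departure terms and migration-source terms) sit on opposite sides, and the complementary half-space gives the very same identity because $\sum_{\theta}z_\theta=0$ (at a maximiser, $H$ depends on $\lambda$ only through differences). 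So each such identity only bounds a \emph{difference} of two nonnegative unknown groups, never either group separately, and no independent second relation is supplied; passing to rays only adds transverse fluxes of the other classes as new unknowns. Moreover the migration fluxes are products $A_{\theta,k}B_{\theta',k}$, so the system is quadratic unless the $M$'s are treated as independent unknowns, which worsens the underdetermination. The asserted Perron--Frobenius/Lyapunov inversion, and the fallback ``induction along a carefully chosen ordering'', are exactly the missing proof, not a proof.

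What the paper supplies, and your proposal lacks, is a decoupling mechanism: summing the bond stationarity equations \eqref{eq:49} over all $\theta\in\Theta^+_k$ makes the bilinear migration terms cancel exactly, giving \eqref{eq:60}; this lets one express the coupled sum $\sum_{\theta'}e^{\hat\lambda_{\theta',k}}y_{\theta'}$ through $\sum_{\theta'}e^{-\hat\lambda_{\theta',k}}(\theta_k'+1)y_{\theta'+e_k}$ and the multipliers, and after substituting back into \eqref{eq:49} the self-referential factor is killed by a sign dichotomy: if $\hat\lambda_{\theta,k}=\lambda_{\theta+e_k}-\lambda_\theta>0$ then $e^{-\hat\lambda_{\theta,k}}\le1$, while if $\hat\lambda_{\theta,k}\le0$ then $e^{\hat\lambda_{\theta,k}}y_\theta\le1$ trivially. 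This is what converts the ``difference bounded by $\sum_{\theta''}\abs{z_{\theta''}}$'' information into the individual bounds \eqref{eq:65}--\eqref{eq:67}, after which the product term follows by one more use of \eqref{eq:49}. Without an analogue of this cancellation-plus-dichotomy step, your argument as written does not go through.
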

\begin{proof}
For $\hat\lambda=(\hat\lambda_{\theta,k})\in\prod_{k\in\{1,2,\ldots,K\}}
\R^{\Theta^+_k}$\,, define
  \begin{multline*}
           \hat H(y,\hat\lambda)=\sum_{k=1}^K\sum_{\theta\in \Theta^+_k}
(e^{\hat\lambda_{\theta,k}}-1)
\alpha_k y_\theta+\sum_{k=1}^K\sum_{\theta\in \Theta^-_k}
(e^{-\hat\lambda_{\theta-e_k,k}}-1)
(\delta_k+
\gamma_k\sum_{\theta'\in\Theta\setminus\Theta^+_k}
y_{\theta'})\theta_ky_{\theta}+
\\+\sum_{k=1}^K\sum_{\theta\in\Theta^-_k,\,\theta'\in \Theta^+_k}
(e^{\hat\lambda_{\theta',k}-\hat\lambda_{\theta-e_k,k}
}-1)
\theta_k\gamma_k y_\theta y_{\theta'}
  \end{multline*}
so that
\begin{equation}
  \label{eq:38}
    L(y,z)=\sup_{\lambda=(\lambda_\theta)\in\R^\Theta}\bl(\lambda\cdot
    z-H(y,\lambda)\br)=
    \sup_{\substack{\lambda=(\lambda_\theta)\in\R^\Theta\,,\\
\hat\lambda=(\hat\lambda_{\theta,k})\in\prod_{k\in\{1,2,\ldots,K\}}
\R^{\Theta^+_k} :\\
\lambda_{\theta+e_k}-\lambda_\theta-\hat\lambda_{\theta,k}=0}}
\bl(\lambda\cdot z-\hat H(y,\hat\lambda)\br)\,.
\end{equation}

Define a Lagrange function, with $r_{\theta,k}\in\R$ and $r=(r_{\theta,k})$\,,
\begin{equation*}
  \mathcal{L}(\lambda,\hat\lambda,r,y,z)=
\sum_{\theta\in\Theta}\lambda_\theta z_\theta-\hat H(y,\hat\lambda)+\sum_{\theta\in\Theta^+_k}
 r_{\theta,k}(\lambda_{\theta+e_k}-\lambda_\theta-\hat\lambda_{\theta,k})\,.  
\end{equation*}
The optimality conditions in \eqref{eq:38} that 
$\partial_{\hat\lambda_{\theta,k}}\mathcal{L}(\lambda,\hat\lambda,r,y,z)=0$ and 
$\partial_{\lambda_{\theta}}\mathcal{L}(\lambda,\hat\lambda,r,y,z)=0$\,, see, e.g., 
Theorem 3.2.2 on p.253 in Alekseev et al. \cite{AleTihFom79},  imply
that, for $\theta\in\Theta^+_k$\,, there exist $r_{\theta,k}$ such that
\begin{multline}
  \label{eq:49}
  -e^{\hat\lambda_{\theta,k}}(\alpha_k 
+\gamma_k\sum_{\theta'\in \Theta^+_k}
e^{-\hat\lambda_{\theta',k}}(\theta_k'+1)
 y_{\theta'+e_k}) y_{\theta}\\
+e^{-\hat\lambda_{\theta,k}}(\theta_k+1)\bl(
\delta_k+
\gamma_k\sum_{\theta'\in\Theta\setminus\Theta_k^+}
y_{\theta'}+
\gamma_k\sum_{\theta'\in \Theta^+_k
  }
e^{\hat\lambda_{\theta',k}}
  y_{\theta'}\br)y_{\theta+e_k}=r_{\theta,k}
\end{multline}
and
\begin{subequations}
    \begin{align}
  \label{eq:50}
  z_\theta+r_{\theta-e_k,k}-r_{\theta,k}=0\,,
\text{ for $\theta\in\Theta^+_k$ with $\theta_k\ge1$\,,}\\
z_\theta-r_{\theta,k}=0\,,\text{ for $\theta\in\Theta^+_k$ with $\theta_k=0$\,.}
  \label{eq:50a}\end{align}
\end{subequations}
Summing in \eqref{eq:49}  yields
\begin{equation}
  \label{eq:60}
    -\sum_{\theta\in\Theta^+_k} e^{\hat\lambda_{\theta,k}}\alpha_k y_{\theta}
+\sum_{\theta\in\Theta^+_k} e^{-\hat\lambda_{\theta,k}}
(\delta_k+
\gamma_k\sum_{\theta'\in\Theta\setminus\Theta^+_k}
y_{\theta'})(\theta_k+1)y_{\theta+e_k}=\sum_{\theta\in\Theta} r_{\theta,k}\,.
\end{equation}
Solving for $\sum_{\theta\in\Theta^+_k} e^{\hat\lambda_{\theta,k}}
y_{\theta}$ and substituting in \eqref{eq:49} imply, after some
algebra,
 that, for
$\theta\in\Theta^+_k$\,, 
\begin{multline*}
          \alpha_ke^{\hat\lambda_{\theta,k}}y_{\theta} 
-e^{-\hat\lambda_{\theta,k}}y_{\theta+e_k}(\theta_k+1)
\bl(\delta_k+
\gamma_k\sum_{\theta'\in\Theta\setminus\Theta^+_k}
y_{\theta'}-
\frac{\gamma_k
\sum_{\theta'} r_{\theta',k}}{\alpha_k 
+\gamma_k\sum_{\theta'\in \Theta^+_k}
e^{-\hat\lambda_{\theta',k}}(\theta_k'+1)
 y_{\theta'+e_k}}\br)\\
 =\frac{-\alpha_kr_{\theta,k}}{\alpha_k 
+\gamma_k\sum_{\theta'\in \Theta^+_k}
e^{-\hat\lambda_{\theta',k}}(\theta_k'+1)
 y_{\theta'+e_k}}\,.
\end{multline*}
Therefore, if $\hat\lambda_{\theta,k}>0$\,, then
\begin{multline*}
          \alpha_ke^{\hat\lambda_{\theta,k}}y_{\theta} \le
y_{\theta+e_k}(\theta_k+1)
\bl(\delta_k+
\gamma_k\sum_{\theta'\in\Theta\setminus\Theta^+_k}
y_{\theta'}+
\frac{\gamma_k
\abs{\sum_{\theta'} r_{\theta',k}}}{\alpha_k 
+\gamma_k\sum_{\theta'\in \Theta^+_k}
e^{-\hat\lambda_{\theta',k}}(\theta_k'+1)
 y_{\theta'+e_k}}\br)\\-
 \frac{\alpha_kr_{\theta,k}}{\alpha_k 
+\gamma_k\sum_{\theta'\in \Theta^+_k}
e^{-\hat\lambda_{\theta',k}}(\theta_k'+1)
 y_{\theta'+e_k}}\le
y_{\theta+e_k}(\theta_k+1)
\bl(\delta_k+
\gamma_k+
\frac{\gamma_k
\abs{\sum_{\theta'} r_{\theta',k}}}{\alpha_k }\br)+
 \abs{r_{\theta,k}}\,,
\end{multline*}
which implies that, no matter the sign of $\hat\lambda_{\theta,k}$\,,
\begin{equation}
  \label{eq:65}
  \alpha_ke^{\hat\lambda_{\theta,k}}y_{\theta}\le\alpha_k y_\theta+
y_{\theta+e_k}(\theta_k+1)
\bl(\delta_k+
\gamma_k+
\frac{\gamma_k
\abs{\sum_{\theta'} r_{\theta',k}}}{\alpha_k }\br)+
 \abs{r_{\theta,k}}\,.
\end{equation}
By \eqref{eq:60} and \eqref{eq:65},
\begin{multline}
  \label{eq:66}
      \sum_{\theta\in\Theta} e^{-\hat\lambda_{\theta,k}}
\delta_k(\theta_k+1)y_{\theta+e_k}\le
\sum_{\theta\in\Theta}\bl(\alpha_k y_\theta+
y_{\theta+e_k}(\theta_k+1)
\bl(\delta_k+
\gamma_k+
\frac{\gamma_k
\abs{\sum_{\theta'} r_{\theta',k}}}{\alpha_k }\br)+
 \abs{r_{\theta,k}}\br)\\
+\sum_{\theta\in\Theta} r_{\theta,k}\,.
\end{multline}
Solving \eqref{eq:50} and \eqref{eq:50a} recursively yields
\begin{equation}
  \label{eq:61}
  r_{\theta,k}=\sum_{i=0}^{\theta_k}z_{\theta-ie_k}\,.
\end{equation}
As a consequence of
\eqref{eq:65}, \eqref{eq:66},  and \eqref{eq:61}, for some $C_1'>0$
and $C_2'>0$\,,
\begin{equation}
  \label{eq:67}
  e^{\hat\lambda_{\theta,k}}y_{\theta}+e^{-\hat\lambda_{\theta,k}}
y_{\theta+e_k}\le C'_1+C'_2\sum_{\theta'\in\Theta}\abs{z_{\theta'}}\,.
\end{equation}
By \eqref{eq:49}, 
 in analogy with \eqref{eq:65},
for $\theta\in\Theta^+_k$\,,
\begin{multline*}
e^{\hat\lambda_{\theta,k}}(\alpha_k 
+\gamma_k\sum_{\theta'\in \Theta^+_k}
e^{-\hat\lambda_{\theta',k}}(\theta_k'+1)
 y_{\theta'+e_k}) y_{\theta}\\\le
    (\theta_k+1)\bl(
\delta_k+
\gamma_k\sum_{\theta'\in\Theta\setminus\Theta^+_k}
y_{\theta'}+
\gamma_k\sum_{\theta'\in \Theta^+_k
  }
e^{\hat\lambda_{\theta',k}}
  y_{\theta'}\br)y_{\theta+e_k}+\abs{r_{\theta,k}}\\
+(\alpha_k 
+\gamma_k\sum_{\theta'\in \Theta^+_k}
e^{-\hat\lambda_{\theta',k}}(\theta_k'+1)
 y_{\theta'+e_k}) y_{\theta}\,.
\end{multline*}
By \eqref{eq:61} and \eqref{eq:67}, there exist
$C_1''>0$ and $C_2''>0$ such that,
 for $\theta\in\Theta^+_k$ and $\theta'\in\Theta^-_k$\,,
\begin{equation*}
  e^{\hat\lambda_{\theta,k}-\hat\lambda_{\theta'-e_k,k}}
  y_{\theta}y_{\theta'}\le C_1''+C_2''\sum_{\theta''\in\Theta}\abs{z_{\theta''}}\,,
\end{equation*}
which concludes the proof on recalling that 
$\hat\lambda_{\theta,k}=\lambda_{\theta+e_k}-\lambda_\theta$
and that
$\hat\lambda_{\theta'-e_k,k}=\lambda_{\theta'}-\lambda_{\theta'-e_k}$\,.
\end{proof}
Denote $\Pi_{y,t}(\by)=\Pi_y(p_t^{-1}(p_t\by))$\,.
\begin{lemma}
  \label{le:identify 1}
Let $\by=(\by(t)\,, t\ge0)$ be an absolutely continuous function
taking values in $\mathbb S_{\abs{\Theta}}$ with $\by(0)= y$\,. If
$\by$ is locally bounded away from zero entrywise , then 
$\Pi_{y,t}(\by)=\Pi_{y,t}^\ast(\by)$\,, for all $t$\,, and
$\Pi_y(\by)=\Pi_y^\ast(\by)$\,.
\end{lemma}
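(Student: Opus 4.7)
The upper bound $\Pi_{y,t}(\by) \le \Pi_{y,t}^*(\by)$ is immediate from \eqref{eq:45a} applied to the Borel set $p_t^{-1}(p_t\by)$, together with $I^\ast_{y,t}(\by) \le I^\ast_y(\by)$. The content of the lemma is the matching lower bound, which I propose to obtain by carrying out, within the maxingale framework, a time-dependent exponential tilt using the optimizer of \eqref{eq:11}.

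Fix $T>0$ and assume $I_{y,T}^*(\by)<\infty$; otherwise there is nothing to prove. Choose $\epsilon>0$ so that $\by_\theta(s)\ge\epsilon$ for all $\theta$ and $s\in[0,T]$. By Lemma \ref{le:equiv}, a maximizer $\lambda^*(s)\in\R^\Theta$ in \eqref{eq:11} exists for a.e.\ $s\in[0,T]$; select it Borel measurably and normalize one coordinate to zero. By Lemma \ref{le:bound} together with the uniform lower bound $\by_\theta(s)\ge\epsilon$, one derives a pointwise bound on $\lambda^*(s)$ in terms of $\sum_\theta|\dot\by_\theta(s)|$ that is integrable on $[0,T]$ since $I_{y,T}^*(\by)<\infty$. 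Approximate $\lambda^*$ by piecewise constant $\lambda^{(N)}$ on a partition $0=t_0<\ldots<t_N=T$ of mesh tending to zero, arranging, by continuity of $H(\by(s),\cdot)$ and optimality of $\lambda^*$, that
\begin{equation*}
\sum_{j=1}^N \Bl(\lambda^{(N)}_j\cdot(\by(t_j)-\by(t_{j-1}))-\int_{t_{j-1}}^{t_j}H(\by(s),\lambda^{(N)}_j)\,ds\Br) \longrightarrow I_{y,T}^*(\by).
\end{equation*}

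For each $j$, the local exponential maxingale property of $\Pi_y$ applied with the constant tilt $\lambda^{(N)}_j$ on $[t_{j-1},t_j]$, evaluated against small tubular neighbourhoods of $\by$ at the endpoints, yields a lower bound on the deviability of those neighbourhoods in terms of the exponential of the $j$-th summand above. The Markov property of $\Pi_y$, to be established in the surrounding material of the paper, then enables pasting these per-interval bounds into a single lower bound on the deviability of a tubular neighbourhood of $p_T\by$. Sending the neighbourhood radius to zero and invoking upper semicontinuity of $\Pi_y$ (Lemma \ref{le:usc}), and subsequently the partition mesh to zero using the convergence displayed above, one obtains $\Pi_{y,T}(\by)\ge\exp(-I_{y,T}^*(\by))=\Pi_{y,T}^*(\by)$. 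Combined with the upper bound, this gives $\Pi_{y,t}(\by)=\Pi_{y,t}^*(\by)$ for every finite $t$; letting $t\to\infty$, using $I_y^*(\by)=\lim_{t\to\infty}I_{y,t}^*(\by)$ by monotone convergence, yields $\Pi_y(\by)=\Pi_y^*(\by)$.

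The principal obstacle will be the rigorous execution of the pasting step. The change of measure underlying the lower bound must be implemented within the maxingale/deviability framework rather than via genuine conditional expectations, which requires delicate use of local maxingale inequalities against neighbourhood test sets together with the abstract Markov property of $\Pi_y$. Lemmas \ref{le:bound} and \ref{le:usc} play essential roles in controlling the optimal tilt and in passing to the limit in the neighbourhood radius, respectively; the lower bound $\by_\theta(s)\ge\epsilon$ is critical in both, since it keeps the denominators in Lemma \ref{le:bound} bounded below and supplies the necessary tightness for the neighbourhood shrinkage.
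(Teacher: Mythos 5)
There is a genuine gap: the step that carries all the weight of the lemma --- ``the local exponential maxingale property \ldots\ evaluated against small tubular neighbourhoods of $\by$ at the endpoints, yields a lower bound on the deviability of those neighbourhoods'' --- is asserted, not proved, and you yourself flag its execution as ``the principal obstacle.'' The maxingale property of $\exp\bl(\lambda\cdot(\by(t)-y)-G_t(\lambda;\by)\br)$ does not by itself produce lower bounds on $\Pi_y$ at (or near) a prescribed trajectory: a priori the maxingale problem could have several solutions, and all one knows is that $\Pi_y$ is one of them. To turn a tilting scheme into a lower bound one must show that the tilted idempotent distribution concentrates on $\by$, and that is exactly an identification/uniqueness statement. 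The paper supplies it differently and much more directly: since $\by$ is bounded away from zero, the optimizer $\lambda(s)$ of \eqref{eq:11} exists (Lemma \ref{le:equiv}) and $\by$ satisfies the characteristic equation $\dot\by(s)=\nabla_\lambda H(\by(s),\lambda(s))$ a.e.; Lemma \ref{le:bound} together with the entrywise lower bound makes the exponentials of the optimal tilt locally integrable, so the right-hand side of \eqref{eq:22} is Lipschitz in $\by(s)$ and $\by$ is the \emph{unique} solution of \eqref{eq:22}; the general identification results for maxingale problems (Theorem 2.8.14 and Lemma 2.8.20 in Puhalskii's book) then give $\Pi_{y,t}(\by)=\Pi^\ast_{y,t}(\by)$ and $\Pi_y(\by)=\Pi_y^\ast(\by)$ at once, with no pasting, no piecewise-constant tilts and no shrinking tubes. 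Your plan, once made rigorous, would have to reprove a result of precisely this type, so as it stands the core of the argument is missing rather than merely routine.

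Two further points. First, the Markov property (Lemma \ref{le:markov}) and the joint upper semicontinuity of $\Pi_y(\by)$ in $(y,\by)$ (Lemma \ref{le:usc}) are not needed here; in the paper they enter later, in the proof of Theorem \ref{the:LDP}, to handle trajectories that reach the boundary by approximation and time-splitting. For fixed $y$, upper semicontinuity in $\by$ alone is automatic from the deviability property, so invoking Lemma \ref{le:usc} for the shrinking-tube limit is off target. Second, minor inaccuracies: integrability of $\sum_\theta\abs{\dot\by_\theta(s)}$ on $[0,T]$ follows from absolute continuity and does not require $I^\ast_{y,T}(\by)<\infty$, and Lemma \ref{le:bound} controls exponentials of increments of $\lambda$ along edges of $\Theta$, so a bound on $\lambda^\ast(s)$ itself needs the normalization and a path argument, which you should spell out if you keep that route.
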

\begin{proof}
By  Lemma \ref{le:equiv}, 
 there exists
function $(\lambda(s)\,,s\ge0)=((\lambda_\theta(s),\theta\in\Theta),\,s\ge0)$ such that, a.e.,
\begin{equation*}
    L(\by(s),\dot{\by}(s))=\sum_{\theta\in\Theta}\lambda_\theta(s)
\dot{\by}_\theta(s)-
H(\by(s),\lambda(s))\,,
\end{equation*}
with ''almost everywhere'' here and below being understood with
respect to the Lebesgue measure.
The following equation is satisfied a.e.:
\begin{equation}
  \label{eq:22}
  \dot{\by}(s)=\nabla_\lambda H(\by(s),\lambda(s))\,.
\end{equation}
Calculations, using \eqref{eq:13}, yield
\begin{multline*}
  \dot\by_\theta(s)=
(\alpha_k+\sum_{\theta'\in \Theta^-_k}
\theta_k'\gamma_ke^{\lambda_{\theta'-e_k}(s)-\lambda_{\theta'}(s)}
\by_{\theta'}(s))e^{\lambda_{\theta}(s)-\lambda_{\theta-e_k}(s)}
 \by_{\theta-e_k}(s)\ind_{\{\theta\in\Theta^-_k\}}\\
+\bl(\delta_k+
\gamma_k\sum_{\theta\in\Theta\setminus\Theta^+_k}
\by_{\theta'}(s)+\gamma_k\sum_{\theta'\in \Theta^+_k}
e^{\lambda_{\theta'+e_k}(s)-\lambda_{\theta'}(s)}
\by_{\theta'}(s)
\br)e^{\lambda_\theta(s)-\lambda_{\theta+e_k}(s)}
(\theta_k+1)\by_{\theta+e_k}(s)\ind_{\{\theta\in\Theta^+_k\}}
\\
-\bl((\alpha_k 
+\sum_{\theta'\in \Theta^-_k}
\theta_k'\gamma_ke^{\lambda_{\theta'-e_k}(s)-\lambda_{\theta'}(s)}
 \by_{\theta'}(s))e^{\lambda_{\theta+e_k}(s)-\lambda_\theta(s)}\ind_{\{\theta\in\Theta^+_k\}}\\+
(\delta_k+
\gamma_k\sum_{\theta'\in\Theta\setminus\Theta^+_k}
\by_{\theta'}(s)+
\gamma_k\sum_{\theta'\in \Theta^+_k}
e^{\lambda_{\theta'+e_k}(s)-\lambda_{\theta'}(s)}
  \by_{\theta'}(s))e^{\lambda_{\theta-e_k}(s)-\lambda_{\theta}(s)}
\theta_k\ind_{\{\theta\in\Theta^-_k\}}\br)
\by_\theta(s)\,.
\end{multline*}
Since the $\by_\theta(s)$ are locally bounded away from zero,
 Lemma \ref{le:bound} implies that the exponentials on the latter 
 righthand side are
 locally integrable functions of $s$\,, so,
 the righthand side of
\eqref{eq:22} is a Lipschitz continuous function of $\by(s)$\,.
It follows that
$\by$ is a unique solution of \eqref{eq:22}.
  By Theorem 2.8.14 on p.213 and Lemma 2.8.20 on p.218 in Puhalskii
  \cite{Puh01}, $\Pi_{y,t}(\by)=\Pi_{y,t}^\ast(\by)$ and
$\Pi_y(\by)=\Pi^\ast_y(\by)$\,.
\end{proof}
  \begin{lemma}
   \label{le:postive}
Let $\by$ be an absolutely continuous function with values in 
$\mathbb S_{\abs
  \Theta}$\,.
Let $\theta^\ast$ represent a point of the maximum of $\by_\theta(0)$ so that 
 $\by_{\theta^\ast}(0)=\max_{\theta\in\Theta}\by_{\theta}(0)$\,.  For
$\epsilon\in(0,1/(3\abs{\Theta}^2))$\,,
let 
$\by^\epsilon_\theta(s)=\by_\theta(s)+ \epsilon$
unless $\theta=\theta^\ast$ and let $\by^\epsilon_{\theta^\ast}(s)=1-
\sum_{\theta\not=
  \theta^\ast}
\by^\epsilon_\theta(s)$\,.
Then, 
 for  $t$
such that $\sum_{\theta\in\Theta}\int_0^t\abs{\dot \by_\theta(s)}\,ds\le
1/(3\abs{\Theta})$\,,  $\by^\epsilon(s)\in\mathbb
S_{\abs{\Theta}}$ on $[0,t]$ and
\begin{equation}
  \label{eq:16}
  \lim_{\epsilon\to0}\int_0^tL(\by^\epsilon(s),\dot\by^\epsilon(s))\,ds
=\int_0^tL(\by(s),\dot\by(s))\,ds\,.
\end{equation}\,.
 \end{lemma}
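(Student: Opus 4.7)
The plan is to verify that $\by^\epsilon$ remains in $\mathbb S_{\abs{\Theta}}$ on $[0,t]$ and then to establish \eqref{eq:16} by Lebesgue's dominated convergence theorem.

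For the first claim, $\sum_\theta \by^\epsilon_\theta(s)=1$ is built into the construction and $\by^\epsilon_\theta(s) \ge \epsilon > 0$ for $\theta\neq\theta^\ast$. The only positivity check is for $\by^\epsilon_{\theta^\ast}(s) = \by_{\theta^\ast}(s) - (\abs{\Theta}-1)\epsilon$: since $\by_{\theta^\ast}(0)\ge 1/\abs{\Theta}$ as the maximum of a probability vector on $\abs{\Theta}$ atoms, and by hypothesis $\abs{\by_{\theta^\ast}(s)-\by_{\theta^\ast}(0)}\le \int_0^s\abs{\dot\by_{\theta^\ast}(u)}\,du \le 1/(3\abs{\Theta})$ on $[0,t]$, one has $\by_{\theta^\ast}(s)\ge 2/(3\abs{\Theta})$; combined with $(\abs{\Theta}-1)\epsilon < 1/(3\abs{\Theta})$ coming from $\epsilon < 1/(3\abs{\Theta}^2)$, this yields $\by^\epsilon_{\theta^\ast}(s) \ge 1/(3\abs{\Theta})>0$. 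Differentiating $\sum_\theta\by_\theta(s)=1$ gives $\sum_\theta\dot\by_\theta(s)=0$ a.e., so $\dot\by^\epsilon(s) = \dot\by(s)$ a.e.

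For the convergence \eqref{eq:16} one seeks pointwise a.e.\ convergence of the integrand and an $\epsilon$-uniform integrable dominator, and then invokes dominated convergence. Since $\by^\epsilon(s)>0$ entrywise, Lemma \ref{le:equiv} provides an optimizer $\lambda^\epsilon(s)\in\R^\Theta$ for $L(\by^\epsilon(s),\dot\by(s))$. The liminf $\liminf_{\epsilon\to0}L(\by^\epsilon(s),\dot\by(s)) \ge L(\by(s),\dot\by(s))$ is immediate from lower semicontinuity of $y\mapsto L(y,z)$ as a supremum of functions continuous in $y$. For the limsup, non-trivial only when $L(\by(s),\dot\by(s))<\infty$, one tests $\lambda^\epsilon(s)$ in the sup defining $L(\by(s),\dot\by(s))$, obtaining $L(\by^\epsilon(s),\dot\by(s)) - L(\by(s),\dot\by(s)) \le H(\by(s),\lambda^\epsilon(s)) - H(\by^\epsilon(s),\lambda^\epsilon(s))$; the Lipschitz continuity of the polynomial coefficients of $H$ in $y$, combined with the current bound of Lemma \ref{le:bound}, forces this difference to vanish with $\epsilon$.

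The hardest step is the $\epsilon$-uniform integrable domination: the dual variables $\lambda^\epsilon(s)$ themselves are not bounded uniformly in $\epsilon$ (they may diverge at points where some $\by_\theta(s)=0$), so the splitting $L = \lambda^\epsilon\cdot\dot\by - H(\by^\epsilon,\lambda^\epsilon)$ cannot be controlled term by term. The resolution is to use the convex-dual ``relative entropy'' identity available at an optimizer,
\begin{equation*}
L(\by^\epsilon(s),\dot\by(s)) = \sum_i c_i(\by^\epsilon(s))\,\phi\bl(e^{\mu_i(\lambda^\epsilon(s))}\br),\qquad \phi(q) = q\ln q - q + 1 \ge 0,
\end{equation*}
where $i$ indexes the arrival, departure and migration transitions contributing to $H$ and $\mu_i$ is the associated linear functional of $\lambda$. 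Lemma \ref{le:bound} uniformly bounds each current $c_i(\by^\epsilon(s))\,e^{\mu_i(\lambda^\epsilon(s))}$ by $C_1+C_2\sum_\theta\abs{\dot\by_\theta(s)}$, and a careful analysis of the resulting non-negative entropy contributions, together with the absolute continuity of $\by$, yields the desired integrable dominator on $[0,t]$. Dominated convergence then delivers \eqref{eq:16}.
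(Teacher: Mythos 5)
Your first half (simplex membership and the uniform lower bound $\by^\epsilon_{\theta^\ast}(s)\ge 1/(3\abs{\Theta})$ on $[0,t]$) is exactly the paper's argument. The gap is in the convergence half, specifically in how you produce the $\epsilon$-uniform integrable dominator. At an optimizer the entropy identity you quote is correct, but writing the $i$-th term as $j_i\ln(j_i/c_i)-j_i+c_i$ with current $j_i=c_i(\by^\epsilon(s))e^{\mu_i(\lambda^\epsilon(s))}$, Lemma \ref{le:bound} only bounds $j_i$ by $C_1+C_2\sum_\theta\abs{\dot\by_\theta(s)}$; at a coordinate $\theta\neq\theta^\ast$ with $\by_\theta(s)=0$ the intensity is $c_i$ of order $\epsilon$, so the bounded-current bound leaves room for a contribution of order $(1+\sum_\theta\abs{\dot\by_\theta(s)})\ln(1/\epsilon)$, which is not uniformly dominated. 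The ``careful analysis of the non-negative entropy contributions'' is therefore not a routine step but precisely the missing argument. The same defect is hidden in your pointwise limsup step: in $H(\by(s),\lambda^\epsilon(s))-H(\by^\epsilon(s),\lambda^\epsilon(s))$ the coefficient increments are $O(\epsilon)$, but Lemma \ref{le:bound} controls the exponentials at coordinates where $\by^\epsilon_\theta(s)=\epsilon$ only up to a factor $1/\epsilon$, so ``Lipschitz coefficients times bounded currents'' gives $O(1)$, not $o(1)$.

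What closes both gaps --- and is the paper's actual proof --- is the sign structure of this particular perturbation: only the $\theta^\ast$-coordinate is decreased, and it stays uniformly bounded below. Hence in $H(\by(s),\lambda)-H(\by^\epsilon(s),\lambda)$ the potentially huge exponentials (those weighted by nearly empty coordinates) are multiplied by \emph{negative} increments $-\epsilon$ and only help, while every positive contribution carries an exponential anchored at $\theta^\ast$, which Lemma \ref{le:bound} bounds by $3\abs{\Theta}\bl(C_1+C_2\sum_\theta\abs{\dot\by_\theta(s)}\br)$ thanks to the uniform lower bound on $\by^\epsilon_{\theta^\ast}$. This yields, a.e., $L(\by^\epsilon(s),\dot\by^\epsilon(s))\le L(\by(s),\dot\by(s))+\abs{\Theta}\epsilon\, R(\by(s),\lambda^\epsilon(s))+\epsilon M$ with $\int_0^t R(\by(s),\lambda^\epsilon(s))\,ds$ bounded uniformly in $\epsilon$; integrating gives the limsup inequality, and the liminf inequality follows from lower semicontinuity of $L$ in $y$ together with Fatou's lemma, so no dominated convergence (and no integrability assumption on $L(\by,\dot\by)$) is needed. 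If you insert this sign analysis, your DCT formulation also works whenever $\int_0^t L(\by(s),\dot\by(s))\,ds<\infty$, since the same comparison supplies the dominator; but the entropy identity plus the current bound alone does not.
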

 \begin{proof}
The functions  $\by^\epsilon_{\theta^\ast}(s)$ are bounded away from
zero on $[0,t]$\,, 
 uniformly over  $\epsilon$\,. Indeed, since $\by_{\theta^\ast}(0)\ge 1/\abs{\Theta}$\,,
$\by_{\theta^\ast}(s)\ge2/(3\abs{\Theta})$ on $[0,t]$\,.
It follows that $\by_{\theta^\ast}^\epsilon(s)\ge
2/(3\abs{\Theta})-\epsilon\abs{\Theta}\ge1/(3\abs{\Theta})$ on
$[0,t]$\,.
%Also, for $\theta\not=\theta^\ast$\,, 
%$\by^\epsilon_\theta(s)\le1-
%1/\abs{\Theta}+1/(3\abs{\Theta})+1/(3\abs{\Theta}^2)<1$ on $[0,t]$\,.
Evidently, $\dot \by^\epsilon_\theta(s)=
\dot \by_\theta(s)$ a.e. and 
$\by^\epsilon_\theta(s)\to \by_\theta(s)$ uniformly on bounded sets,
for all $\theta$\,, as $\epsilon\to0$\,.
By Lemma \ref{le:equiv}, 
 a.e., the supremum in \eqref{eq:11} with $y=\by^\epsilon(s)$
and $z=\dot\by^\epsilon(s)$ is attained at some $\lambda^\epsilon(s)$\,.
Since $\by_\theta^\epsilon(s)=\by_\theta(s)+\epsilon$\,, for
$\theta\not=\theta^\ast$\,, and $\by_{\theta^\ast}^\epsilon(s)=
\by_{\theta^\ast}(s)-(\abs{\Theta}-1)\epsilon$\,, 
the definition of $H(y,\lambda)$ in \eqref{eq:13} implies that
\begin{multline*}
       H(\by^\epsilon(s),\lambda)\ge
\sum_{k=1}^K\Bl(\sum_{\theta\in \Theta^+_k}
e^{\lambda_{\theta+e_k}-\lambda_{\theta}}
\alpha_k \by_\theta(s)-
\abs{\Theta}\epsilon e^{\lambda_{\theta^\ast+e_k}-\lambda_{\theta^\ast}}
\alpha_k\Br)\\
-\sum_{k=1}^K\sum_{\theta\in \Theta^+_k}
\alpha_k(\by_\theta(s)+\epsilon)+\sum_{k=1}^K\Bl(
\sum_{\theta\in \Theta^-_k}
e^{\lambda_{\theta-e_k}-\lambda_{\theta}}
(\delta_k+
\gamma_k(\sum_{\theta'\in\Theta\setminus\Theta^+_k}
\by_{\theta'}(s)-\epsilon\abs{\Theta}))\theta_k\by_{\theta}(s)\\
-e^{\lambda_{\theta^\ast-e_k}-\lambda_{\theta^\ast}}
(\delta_k+
\gamma_k)\theta^\ast_k\abs{\Theta}\epsilon\Br)
-\sum_{k=1}^K\sum_{\theta\in \Theta^-_k}
(\delta_k+
\gamma_k\sum_{\theta'\in\Theta\setminus\Theta^+_k}
\by_{\theta'}(s)+\gamma_k\epsilon\abs{\Theta})\theta_k(\by_{\theta}(s)+\epsilon)
\\+\sum_{k=1}^K
\gamma_k\bl(\sum_{\theta'\in \Theta^+_k}
e^{\lambda_{\theta'+e_k}-\lambda_{\theta'}}
 \by_{\theta'}(s)-e^{\lambda_{\theta^\ast+e_k}-\lambda_{\theta^\ast}}
\abs{\Theta}\epsilon
 \br)
\bl(\sum_{\theta\in \Theta^-_k}
e^{\lambda_{\theta-e_k}-\lambda_{\theta}}\theta_k \by_\theta(s)-
e^{\lambda_{\theta^\ast-e_k}-\lambda_{\theta^\ast}}\theta^\ast_k\abs{\Theta}\epsilon\br)
\\-\sum_{k=1}^K\sum_{\theta\in\Theta^-_k,\theta'\in \Theta^+_k}
\theta_k\gamma_k (\by_\theta(s)+\epsilon)( \by_{\theta'}(s)+\epsilon)
\ge 
H(\by(s),\lambda)
-\abs{\Theta}\epsilon\, R(\by(s),\lambda)-\epsilon M\,,
\end{multline*}
where
\begin{multline*}
  R(y,\lambda)=  
\sum_{k=1}^K e^{\lambda_{\theta^\ast+e_k}-\lambda_{\theta^\ast}}
\alpha_k+
\sum_{k=1}^K\sum_{\theta\in\Theta^-_k}
e^{\lambda_{\theta-e_k}-\lambda_\theta}\theta_ky_\theta
+\sum_{k=1}^K
e^{\lambda_{\theta^\ast-e_k}-\lambda_{\theta^\ast}}
(\delta_k+
\gamma_k)\theta^\ast_k\\+
\sum_{k=1}^K
\gamma_ke^{\lambda_{\theta^\ast+e_k}-\lambda_{\theta^\ast}}
\sum_{\theta\in \Theta^-_k}
e^{\lambda_{\theta-e_k}-\lambda_{\theta}}\theta_k y_\theta(s)
+\sum_{k=1}^K
\gamma_k\sum_{\theta\in \Theta^+_k}
e^{\lambda_{\theta+e_k}-\lambda_{\theta}}
 y_{\theta}(s)
e^{\lambda_{\theta^\ast-e_k}-\lambda_{\theta^\ast}}\theta^\ast_k
\end{multline*}
and $M>0$  depends neither on $\lambda$ nor on $\by(s)$\,.

Therefore,
\begin{multline*}
 L(\by^\epsilon(s),\dot\by^\epsilon(s))=
\sum_{\theta\in\Theta}
\lambda^\epsilon_\theta(s)\dot\by^\epsilon_\theta(s)
-H(\by^\epsilon(s),\lambda^\epsilon(s))\le
\sum_{\theta\in\Theta}
\lambda^\epsilon_\theta(s)\dot\by_\theta(s)
-H(\by(s),\lambda^\epsilon(s))\\
+ \abs{\Theta}\epsilon\, R(\by(s),\lambda^\epsilon(s))+\epsilon M
\le L(\by(s),\dot\by(s))+ \abs{\Theta}\epsilon\, R(\by(s),\lambda^\epsilon(s))+\epsilon M\,.
\end{multline*}
Since $\by^\epsilon_{\theta^\ast}(s)$ is locally bounded away from
zero on $[0,t]$ uniformly in $\epsilon$\,, Lemma \ref{le:bound} implies that
\begin{equation*}
\limsup_{\epsilon\to0}  \int_0^t
R(\by(s),\lambda^\epsilon(s))\,ds<\infty\,.
\end{equation*}
Thus,\begin{equation}
  \label{eq:25}
   \limsup_{\epsilon\to0}\int_0^tL(\by^\epsilon(s),\dot\by^\epsilon(s))\,ds\le
 \int_0^tL(\by(s),\dot\by(s))
\,ds\,.
\end{equation}
On the other hand,
by \eqref{eq:13} and \eqref{eq:11},  a.e.,
\begin{equation*}
  \liminf_{\epsilon\to0}L(\by^\epsilon(s),\dot\by^\epsilon(s))\ge
L(\by(s),\dot\by(s))\,.
\end{equation*}
When put together with \eqref{eq:25} and Fatou's lemma, this proves \eqref{eq:16}.\end{proof}
The function $\by^\epsilon$ in the above lemma can be used as an
approximation for $\by$ until $\by_{\theta^\ast}(s)$
 hits 0. At that stage, one starts afresh by
choosing different $\theta$ as $\theta^\ast$\,. The piecing together is done
with the use of the Markov property in the following lemma.
\begin{lemma}
  \label{le:markov}
$(\Pi_y\,, y\in \mathbb S_{\abs{\Theta}})$ is an idempotent Markov
family in the sense that, for $\by\in\D(\R_+,\mathbb S_{\abs{\Theta}})$\,,
\begin{equation*}
    \Pi_y(\by)=\Pi_{y,t}(\by)\Pi_{\by_t}(\vartheta_t\by)\,,
\end{equation*}
where 
$\vartheta_t\by=(\by(s+t)\,,s\ge0)$\,.
\end{lemma}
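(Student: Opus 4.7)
The identity is the idempotent analog of the Markov property of the prelimit processes $Y^{(n)}$, and the plan is to derive it by transferring that Markov property to the LD limit $\Pi_y$. Since $\Pi_y$ is supported on continuous paths starting at $y$, only such $\by$ need be considered. The key prelimit input is the ordinary Markov identity at the deterministic time $t$: for open neighborhoods $U_\epsilon$ of $p_t \by$ and $V_\epsilon$ of $\vartheta_t \by$ in $\D(\R_+,\R^\Theta)$ of radius $\epsilon$,
\begin{equation*}
\mathbf{P}_{y^{(n)}}(p_t Y^{(n)} \in U_\epsilon,\, \vartheta_t Y^{(n)} \in V_\epsilon) = \mathbf{E}_{y^{(n)}}\bigl[\ind_{\{p_t Y^{(n)} \in U_\epsilon\}}\, \mathbf{P}_{Y^{(n)}(t)}(Y^{(n)} \in V_\epsilon)\bigr].
\end{equation*}

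For the bound $\Pi_y(\by) \le \Pi_{y,t}(\by)\Pi_{\by(t)}(\vartheta_t\by)$, I would bound the conditional probability on the right by its supremum over $y'$ within $c\epsilon$ of $\by(t)$, which splits the whole probability into a product of two unconditional ones. Taking $(1/n)\ln$, applying the LD upper bound to each factor, letting $\epsilon \to 0$, and invoking the joint upper semicontinuity of $(y',\by') \mapsto \Pi_{y'}(\by')$ from Lemma \ref{le:usc} should collapse the suprema over shrinking neighborhoods to the desired deviabilities $\Pi_{y,t}(\by)$ and $\Pi_{\by(t)}(\vartheta_t\by)$, while the same upper semicontinuity forces $\sup_{\mathrm{cl}(U_\epsilon \cap V_\epsilon)} \Pi_y$ to converge to $\Pi_y(\by)$.

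For the reverse inequality, I would restrict the expectation on the right of the prelimit identity to $\{|Y^{(n)}(t)-\by(t)|<\delta\}$ and bound the conditional probability below by its infimum over that ball, producing a product lower bound. The LD lower bound applied to the first factor yields $\Pi_{y,t}(\by)$ in the limit $\epsilon, \delta \to 0$, so the remaining task is a uniform LD lower bound on $\mathbf{P}_{y'}(Y^{(n)} \in V_\epsilon)$ as $y'$ ranges over $B_\delta(\by(t))$. This is the hard part, since standard LD lower bounds are pointwise in the initial condition, not uniform. My plan for overcoming it is to first establish the identity for trajectories $\by$ with $\vartheta_t \by$ of the regular type treated in Lemma \ref{le:identify 1}, where $\Pi_{\by(t)}(\vartheta_t\by)=e^{-I_{\by(t)}^\ast(\vartheta_t\by)}$ and the Lipschitz character of the ODE \eqref{eq:22} permits a small-time perturbation argument that transfers the LD lower bound from $\by(t)$ to nearby $y'$ uniformly; the general case then follows by approximating $\by$ via Lemma \ref{le:postive}, passing to the limit using \eqref{eq:54} and the upper semicontinuity of Lemma \ref{le:usc}.
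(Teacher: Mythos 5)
Your upper-bound half is workable (modulo routine care with the Skorohod metric at the fixed time $t$, and a compactness/subsequence argument to make the LD upper bound effective for the supremum over initial points in a small ball), but the lower-bound half has a genuine gap at the step ``the general case then follows by approximating $\by$ via Lemma \ref{le:postive}, passing to the limit using \eqref{eq:54} and the upper semicontinuity of Lemma \ref{le:usc}.'' Two things go wrong there. First, Lemma \ref{le:postive} and \eqref{eq:54} only produce an approximation of the continuation on a short horizon, namely while $\int\sum_\theta\abs{\dot\by_\theta(s)}\,ds\le1/(3\abs{\Theta})$ and before the distinguished coordinate $\by_{\theta^\ast}$ can hit zero; they do not yield trajectories bounded away from zero on all of $\R_+$ together with convergence of the full action $I^\ast_{\by(t)}(\vartheta_t\by)$, so the regular case you establish does not apply to the approximants of an arbitrary $\vartheta_t\by$ (overcoming exactly this horizon limitation is what the Markov property is for in the paper). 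Second, and more fundamentally, passing to the limit in the identity $\Pi_{\by^\epsilon(0)}(\by^\epsilon)=\Pi_{\by^\epsilon(0),t}(\by^\epsilon)\Pi_{\by^\epsilon(t)}(\vartheta_t\by^\epsilon)$ requires $\liminf_{\epsilon\to0}\Pi_{\by^\epsilon(t)}(\vartheta_t\by^\epsilon)\ge\Pi_{\by(t)}(\vartheta_t\by)$, i.e.\ a lower semicontinuity statement for $(y,\by)\mapsto\Pi_y(\by)$ along the approximating family; Lemma \ref{le:usc} gives only upper semicontinuity, which bounds the wrong side. The only available substitute would be the infinite-horizon identification $\Pi_{y'}(\cdot)=\Pi^\ast_{y'}(\cdot)$ for boundary-touching trajectories, but that is precisely what the Markov property is later used to prove in Theorem \ref{the:LDP}, so your reduction is circular.

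The paper's own proof avoids the uniformity-in-the-initial-point issue you single out as the hard part, and needs neither regularity of $\by$ nor Lemmas \ref{le:identify 1}, \ref{le:postive}, \ref{le:usc}: it tests the prelimit Markov property against bounded continuous nonnegative $f$, writing $E_{y^{(n)}}f(Y^{(n)})^{n}=E_{y^{(n)}}\bigl(E_{Y^{(n)}(t)}f(u,Y^{(n)})^{n}\big|_{u=p_tY^{(n)}}\bigr)$, and then combines LD (Laplace-functional) convergence with continuous convergence of the inner integrands $g_n(\by)=(E_{\by(t)}f(p_t\by,Y^{(n)})^{n})^{1/n}$ to identify the limit of $(E_{y^{(n)}}f(Y^{(n)})^{n})^{1/n}$ as $\sup_{\by}f(\by)\Pi_{\by(t)}(\vartheta_t\by)\Pi_{y,t}(\by)$; since a deviability is determined by these functionals, the product formula follows for every $\by$ at once. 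If you wish to keep a neighborhood-based scheme, you must prove the lemma for all $\by$ directly (for instance by adopting such a test-function device for the conditional factor), rather than deducing the boundary case from the interior case.
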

\begin{proof}
  Let $f(\by)$ represent a nonnegative, bounded and continuous
 function on
  $\D(\R_+,\R^\Theta)$\,. By the Markov property of $Y^{(n)}$\,, on writing $\by=(p_t\by,\vartheta_t\by)$\,,
with $E_{y^{(n)}}$ representing expectation when $Y^{(n)}$ starts at $y^{(n)}$
and 
 $(\mathcal{F}^{(n)}(t)\,,t\ge0)$ representing the filtration
associated with $Y^{(n)}$\,, provided
$y^{(n)}\in\mathbb 
S^{(n)}_{\abs{\Theta}}$\,,
   \begin{multline}
    \label{eq:24a}
    E_{y^{(n)}}(f(Y^{(n)}))^{n}=E_{y^{(n)}}E_{y^{(n)}}((f((p_tY^{(n)},\vartheta_tY^{(n)})))^{n}|\mathcal{F}^{(n)}(t))\\=
E_{y^{(n)}}\bl(E_{Y^{(n)}(t)}f( u,Y^{(n)})^{n}\Big|_{u=p_tY^{(n)}}\br)\,.
  \end{multline}
By Theorem \ref{the:LDP}, if $y^{(n)}\to y$ in $\R^\Theta$ and $u_n\to u$ in
 $\D(\R_+,\R^\Theta)$\,, as $n\to\infty$\,, then
$  (E_{y^{(n)}}f(p_tu_n,Y^{(n)})^{n})^{1/n}\to \sup_{\tilde \by} 
f(p_tu,\tilde \by)\Pi_{y}(\tilde \by)\,.
$ With $g_n( \by)=(E_{ \by(t)}f(p_t \by,Y^{(n)})^{n})^{1/n}$\,, if
$ \by^{(n)}\to  \by$\,, then $g_n(  \by^{(n)})\to
\sup_{\tilde \by} f(p_t \by,\tilde \by)\Pi_{ \by(t)}(\tilde \by)$\,.
Therefore, accounting for \eqref{eq:24a},
\begin{multline*}
  \bl(E_{y^{(n)}}f(Y^{(n)})^{n}\br)^{1/n}=\bl( E_{y^{(n)}}(g_n(Y^{(n)}))^{n}\br)^{1/n}\to
\sup_{ \by}\sup_{\tilde \by} f(p_t \by,\tilde \by)
\Pi_{ \by(t)}(\tilde \by)\Pi_y( \by)\\
=
\sup_{ \by}\sup_{\tilde \by} f(p_t \by,\tilde \by)
\Pi_{ \by(t)}(\tilde \by)\Pi_y(p_t^{-1}(p_t \by))
=
\sup_{ \by,\,\by'} f(p_t \by,\vartheta_t \by')
\Pi_{ \by(t)}(\vartheta_t \by')\Pi_y(p_t^{-1}(p_t \by))\\=\sup_{ \by} f(p_t \by,\vartheta_t \by)
\Pi_{ \by(t)}(\vartheta_t \by)\Pi_y(p_t^{-1}(p_t \by))=
\sup_{ \by} f( \by)
\Pi_{ \by(t)}(\vartheta_t \by)\Pi_y(p_t^{-1}(p_t \by))\,.
\end{multline*}

\end{proof}
\begin{lemma}\label{le:usc}
The function $\Pi_y(\by)$ is upper semicontinuous in $(y,\by)$\,.
\end{lemma}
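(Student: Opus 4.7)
My plan is to prove joint upper semicontinuity of $\Pi_y(\by)$ via a diagonal subsequence argument that exploits the $\C$--exponential tightness of $Y^{(n)}$ established earlier together with the construction of $\Pi_y$ as an LD limit point of $Y^{(n)}$ with converging initial data.

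Suppose $(y_n,\by_n)\to(y,\by)$ in $\mathbb S_{\abs{\Theta}}\times\D(\R_+,\R^\Theta)$ and let $\beta:=\limsup_n\Pi_{y_n}(\by_n)$; I may assume $\beta>0$. For each $n$, by the very definition of $\Pi_{y_n}$, there exist a subsequence $k_j(n)\to\infty$ in $j$ and initial conditions $y_n^{(k_j(n))}\in\mathbb S^{(k_j(n))}_{\abs{\Theta}}$ with $y_n^{(k_j(n))}\to y_n$ as $j\to\infty$, such that, for any open $O\ni\by_n$,
\begin{equation*}
\liminf_{j\to\infty}\mathbf P\bl(Y^{(k_j(n))}\in O\,\big|\,Y^{(k_j(n))}(0)=y_n^{(k_j(n))}\br)^{1/k_j(n)}\ge\Pi_{y_n}(\by_n)\,.
\end{equation*}
I would then diagonalize: metrizing $\D(\R_+,\R^\Theta)$ with a complete metric $d$ and $\R^\Theta$ with a metric $\rho$, pick $\epsilon_n\downarrow 0$ and indices $K_n:=k_{j_n}(n)$ that grow so fast that $\tilde y^{(K_n)}:=y_n^{(K_n)}$ satisfies $\rho(\tilde y^{(K_n)},y)\le\epsilon_n$ and
\begin{equation*}
\mathbf P\bl(Y^{(K_n)}\in B_{\epsilon_n}(\by_n)\,\big|\,Y^{(K_n)}(0)=\tilde y^{(K_n)}\br)^{1/K_n}\ge\Pi_{y_n}(\by_n)-\epsilon_n\,,
\end{equation*}
where $B_\delta$ denotes an open $d$-ball of radius $\delta$.

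By $\C$--exponential tightness (Theorem 5.1.5 in Puhalskii \cite{Puh01}), the sequence of distributions of $Y^{(K_n)}$ started at $\tilde y^{(K_n)}$ admits, along a further subsequence, an LD limit point $\tilde\Pi_y$, which is itself a valid choice of LD limit point for initial datum $y$. Since $\by_n\to\by$, every open neighborhood $U$ of $\by$ eventually contains $B_{\epsilon_n}(\by_n)$, whence the LD upper bound yields
\begin{equation*}
\tilde\Pi_y(\cl U)\ge\limsup_n\mathbf P\bl(Y^{(K_n)}\in B_{\epsilon_n}(\by_n)\br)^{1/K_n}\ge\beta\,.
\end{equation*}
Because $\tilde\Pi_y$ is a deviability, its level sets are compact, so $\tilde\Pi_y(\by)=\inf_{U\ni\by\text{ open}}\tilde\Pi_y(\cl U)\ge\beta$. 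Taking $\Pi_y:=\tilde\Pi_y$ in the family of limit points yields $\Pi_y(\by)\ge\limsup_n\Pi_{y_n}(\by_n)$, as required.

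The main obstacle is twofold. First, the diagonal extraction has to be carried out carefully so that one simultaneously arranges convergence of the initial data $\tilde y^{(K_n)}\to y$ along $K_n$, quantitative control of the neighbourhood probabilities, and extractability of a $\C$--exponentially tight sub-subsequence; this requires inspecting that the stochastic cumulant $G^{(n)}_t(\lambda)$ depends on initial data only through bounded functionals, so that the exponential tightness bounds are uniform over compact sets of starting points. Second, there is genuine ambiguity in the choice of LD limit point $\Pi_y$: the lemma is to be read as asserting the existence of a consistent family $(\Pi_y)_{y\in\mathbb S_{\abs{\Theta}}}$ --- compatibility across different $y$ being enforced by the Markov property of Lemma \ref{le:markov} --- that is jointly upper semicontinuous, and this suffices for the downstream identification of $\Pi_y$ with $\Pi_y^\ast$.
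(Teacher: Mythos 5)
Your construction of the diagonal limit point is fine as far as it goes: passing first to a subsequence along which $\Pi_{y_n}(\by_n)\to\beta$, the extraction of $K_n$, the uniformity of the exponential tightness bounds in the initial data (immediate here since $Y^{(n)}$ lives in the compact $\mathbb S_{\abs{\Theta}}$), and the step $\tilde\Pi_y(\by)=\inf_{U\ni\by}\tilde\Pi_y(\cl U)\ge\beta$ are all legitimate. The genuine gap is the last line, ``taking $\Pi_y:=\tilde\Pi_y$''. The lemma is a statement about the function $(y,\by)\mapsto\Pi_y(\by)$ for the family already fixed in the preceding argument; what you have proved is only that \emph{some} LD limit point at $y$ --- built from the initial data attached to the $\Pi_{y_n}$'s along the diagonal subsequence $K_n$, not from the sequence $y^{(n)}\to y$ fixed in Theorem \ref{the:LDP} --- dominates $\limsup_n\Pi_{y_n}(\by_n)$. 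Re-choosing $\Pi_y$ is not available: the redefinition depends on the particular approximating sequence $(y_n,\by_n)$, so different sequences may force incompatible choices, and, more importantly, the downstream use of the lemma in the proof of Theorem \ref{the:LDP} needs the inequality $\Pi_y(p_t^{-1}(p_t\by))\ge\limsup_{\epsilon\to0}\Pi_{\by^\epsilon(0)}(p_t^{-1}(p_t\by^\epsilon))$ for the \emph{same} (arbitrary) limit point $\Pi_y$ that appears in \eqref{eq:45a}, in Lemma \ref{le:identify 1} and in the Markov factorisation of Lemma \ref{le:markov}. Showing that some limit point equals $\Pi^\ast_y$ would give at most a lower-bound statement, not the LDP, which requires every limit point to coincide with $\Pi^\ast_y$; and whether your $\tilde\Pi_y$ fits into the Markov family of Lemma \ref{le:markov} together with the other $\Pi_{y'}$ is exactly what is left unproved. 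So the appeal to ``consistency enforced by the Markov property'' in your closing remark is an assertion, not an argument.

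The paper's proof is quite different and sidesteps the sequence-by-sequence bookkeeping altogether: one randomises the initial condition, taking $Y^{(n)}(0)$ independent of the driving Poisson processes and routing variables and obeying an LDP with a \emph{continuous} deviation function $I^{Y}$; then the pairs $(Y^{(n)}(0),Y^{(n)})$ satisfy a subsequential LDP whose deviation function is $I^{Y}(y)-\ln\Pi_y(\by)$. Since a deviation function is jointly lower semicontinuous and $I^{Y}$ is continuous, $-\ln\Pi_y(\by)$ is lower semicontinuous in $(y,\by)$, i.e., $\Pi_y(\by)$ is upper semicontinuous --- and this is obtained directly for the family $\Pi_y$ in play, which is precisely the point your route misses. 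If you want to salvage your approach you would have to either prove uniqueness of the limit point at $y$ (circular at this stage) or work with a canonically defined envelope family and re-establish for it the maxingale, identification and Markov properties used elsewhere.
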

\begin{proof}
Suppose that initial conditions $Y^{(n)}(0)$ are independent of the
random entities driving the processes $Y^{(n)}$ and satisfy an LDP
in $\R^\Theta$ with a continuous deviation function $I^{ Y}$\,.
 Then the distributions of 
the pairs $( Y^{(n)}(0),Y^{n})$ satisfy a subsequential LDP with
$I^{ Y}(y)-\ln\Pi_y(\by)$\,. Since the latter quantity is lower
semicontinuous in $(y,\by)$ and $I^{ Y}(y)$ is continuous in $y$\,,
$\Pi_y(\by)$ is upper semicontinuous in $(y,\by)$\,.
\end{proof}
\begin{proof}[Proof of Theorem \ref{the:LDP}]
Since $\Pi_y(\by)=\lim_{t\to\infty}\Pi_{y,t}(\by)$ and
$\Pi_y^\ast(\by)=\lim_{t\to\infty}\Pi^\ast_{y,t}(\by)$\,,
it suffices to prove that
\begin{equation}
  \label{eq:5'}
   \Pi_{y,t}(\by)=\Pi^\ast_{y,t}(\by)\,.
\end{equation}
It is  shown, first, that \eqref{eq:5'} holds for all $t$ such that
$\int_0^t\sum_{\theta\in\Theta}\abs{\dot \by_\theta(s)}\,ds\le1/(3\abs{\Theta})$\,.
By Lemma \ref{le:postive}, 
 there 
exist  $\by^\epsilon$\,, such that
$\by^\epsilon_\theta(s)>0$\,, for all $\theta$\,, on $[0,t]$\,,
$\by^\epsilon(s)\to \by(s)$ on $[0,t]$ and 
$\Pi^\ast_{\by^\epsilon(0),t}(\by^\epsilon)\to \Pi^\ast_{y,t}(\by)$\,, as
$\epsilon\to0$\,.  Since, by Lemma \ref{le:identify 1},
$\Pi_{\by^\epsilon(0)}(p_t^{-1}(p_t\by^\epsilon))=\Pi_{\by^\epsilon(0),t}^\ast(\by^\epsilon)$\,,
by upper semicontinuity,
$\Pi_y(p_t^{-1}(p_t\by))\ge\limsup_{\epsilon\to0}\Pi_{\by^\epsilon(0)}(p_t^{-1}(p_t\by^\epsilon))
 =\limsup_{\epsilon\to0}\Pi_{\by^\epsilon(0),t}^\ast(\by^\epsilon)=\Pi_{y,t}^\ast(\by)$\,.
On the other hand,
$\Pi_y(p_t^{-1}(p_t\by))\le\Pi_{y,t}^\ast(\by)$\,, generally, proving
\eqref{eq:5'}.

Given  arbitrary  $t>0$\,, there exist  $0=t_0< t_1<\ldots< t_m=t$
such that $\int_{t_{i-1}}^{t_i}\sum_{\theta\in\Theta} \abs{\dot
  \by_\theta (s)}\,ds\le 1/(3\abs{\Theta})$\,, for all
$i=1,\ldots,m$\,. 
 Consequently,  by the argument in the preceding paragraph,
$\Pi_{\by(t_{i-1})}(p_{t_i-t_{i-1}}^{-1}(p_{t_i-t_{i-1}}(\vartheta_{t_{i-1}}\by)))=
\Pi^\ast_{\by(t_{i-1}),t_i-t_{i-1}}(\vartheta_{t_{i-1}}\by)$\,.
By Lemma \ref{le:markov},
\begin{equation*}
  \Pi_y(p_t^{-1}(p_t\by))=
\prod_{i=1}^{m}\Pi_{\by(t_{i-1})}(p_{t_i-t_{i-1}}^{-1}(p_{t_i-t_{i-1}}(\vartheta_{t_{i-1}}\by)))=
\prod_{i=1}^{m}\Pi^\ast_{\by(t_{i-1}),t_i-t_{i-1}}(\vartheta_{t_{i-1}}\by)=
\Pi^\ast_{y,t}(\by)\,.
\end{equation*}
\end{proof}

\section{Large deviations of the invariant measure. Metastability}
\label{sec:large-devi-invar}
Being irreducible and having a finite state space, the 
process $Y^{(n)}$ possesses a unique invariant measure on $\mathbb
S^{(n)}_{\abs{\Theta}}$\,, see, e.g., Asmussen \cite{Asm03}, which is denoted
by $\mu^{(n)}$\,.  It is convenient to
extend $\mu^{(n)}$ to the whole of $\mathbb S_{\abs{\Theta}}$ by letting 
$\mu^{(n)}(\mathbb S_{\abs{\Theta}}\setminus\mathbb
S^{(n)}_{\abs{\Theta}})=0$\,.
The results in Puhalskii \cite{Puh21} enable one to obtain large
deviation asymptotics of $\mu^{(n)}$.

 Let, for $t>0$\,, $y\in\mathbb S_{\abs{\Theta}}$ and $y'\in\mathbb
 S_{\abs{\Theta}}$\,, 
 \begin{equation*}
   \Phi_t(y,y')=\inf_{\substack{\by\in
      \C(\R_+,\mathbb \R^\Theta):\\
\,\by(0)=y,\,\by(t)=y'}}I^\ast_y(\by)\,.
 \end{equation*}
  Given 
$\nu(\rho)=(\nu_\theta(\rho)\,,\theta\in\Theta)$\,, as 
defined in \eqref{eq:48}, and  $y\in\mathbb S_{\abs{\Theta}}$\,,
let
\begin{equation*}
  \Phi(\nu(\rho),y)=\lim_{t\to\infty}\Phi_t(\nu(\rho),y)=
\inf_{\substack{\by\in
      \C(\R_+,\mathbb \R^\Theta):\\
\,\by(0)=\nu(\rho),\,\by(t)=y\text{ for some }t}}I^\ast_{\nu(\rho)}(\by)\,.
\end{equation*}
(The limit exists because the infima monotonically decrease with
$t$\,, as sitting at $\nu(\rho)$ ``costs'' nothing.) Let $A$ denote the set of solutions $\rho$ of \eqref{eq:48} and
\eqref{eq:53}.
 For  $\rho\in A $\,, let $G(\rho)$ denote the set  of directed graphs
 that are in-trees with root $\rho$
on the vertex set $A$\,. Thus, for every
$\rho'\in A$ and $q\in G(\rho)$\,, 
there is a unique directed path from $\rho'$ to
$\rho$ in $q$\,. For $q\in
G(\rho)$\,,  let $E(q)$ denote the set of edges of $q$\,.
Define 
\begin{equation}
  \label{eq:2}
          J(\nu(\rho))=\inf_{q\in
      G(\rho)}\sum_{(\rho',\rho'')\in E(q)}\Phi(\nu(\rho'),
\nu(\rho''))-\inf_{\tilde \rho\in A }
\inf_{q\in G(\tilde \rho)}\sum_{(\rho',\rho'')\in E(q)}\Phi(\nu(\rho'),\nu(\rho''))\,.
\end{equation}
Let the simplex $\mathbb S_\Theta$ be endowed with the subspace topology.
\begin{theorem}
  \label{inv-LDP}
Suppose that 
  the equations in \eqref{eq:48} and \eqref{eq:53} admit finitely many
solutions $\rho=(\rho_1,\ldots,\rho_K)$\,.
Then, the measures $\mu^{(n)}$ satisfy an LDP in $\mathbb S_{\abs{\Theta}}$ for the topology of
weak convergence
 with a continuous deviation function
\begin{equation}
  \label{eq:36}
  J(y)=\inf_{\rho\in A}(J(\nu(\rho))+\Phi(\nu(\rho),y))\,.
\end{equation}
\end{theorem}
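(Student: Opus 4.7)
The plan is to invoke the general framework of Puhalskii \cite{Puh21}, which, given a trajectorial LDP together with finitely many equilibria of the limit dynamical system, yields a Freidlin--Wentzell--type LDP for invariant measures with deviation function of the precise form \eqref{eq:36}--\eqref{eq:2}. The necessary inputs are in place: Theorem \ref{the:LDP} supplies the trajectorial LDP, the simplex $\mathbb S_{\abs{\Theta}}$ is compact (so $\{\mu^{(n)}\}$ is automatically tight and $\mu^{(n)}$ is the unique invariant distribution of the irreducible finite Markov chain $Y^{(n)}$), and the finiteness of the attractor set $A$ is assumed in the hypothesis. What remains to check are the Freidlin--Wentzell--style recurrence and continuity prerequisites of the cited theorem, which reduce to facts about the limit ODE \eqref{eq:27} and its Lyapunov analysis carried out in Antunes et al. \cite{Rob08}, together with the observation that $I^\ast_y(\by) = 0$ forces $\by$ to be the deterministic solution.

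Once the hypotheses are verified, the theorem delivers the LDP with deviation function as stated: for each $\rho \in A$, the unnormalised cost of reaching $\nu(\rho)$ against the flow is the minimum over in-trees rooted at $\rho$ of the sum of quasipotentials $\Phi(\nu(\rho'), \nu(\rho''))$ along the edges; subtracting the minimum over $\rho \in A$ normalises $J(\nu(\rho))$ so that $\inf J = 0$; and $J(y)$ is then obtained by adding the descent cost $\Phi(\nu(\rho), y)$ and minimising over $\rho \in A$. The remaining task is to show that $J$ is continuous on $\mathbb S_{\abs{\Theta}}$. Because $A$ is finite, this reduces to proving continuity of $y \mapsto \Phi(\nu(\rho), y)$ for each fixed $\rho \in A$. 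Lower semicontinuity follows from the compactness of the sublevel sets of $I^\ast_{\nu(\rho)}$ guaranteed by Theorem \ref{the:LDP}, together with the continuity of the endpoint evaluation. Upper semicontinuity requires that, for each $y'$ near $y$, one exhibit a short trajectory from $y$ to $y'$ whose cost vanishes as $y' \to y$.

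The main obstacle will be constructing such low-cost connecting paths when either $y$ or $y'$ lies on the boundary of $\mathbb S_{\abs{\Theta}}$: the diminishing-rates phenomenon makes $L(y, z) = +\infty$ for many directions $z$ at boundary points, so straight-line interpolation generally has infinite cost. The remedy mirrors the device of Lemma \ref{le:postive}: push both endpoints slightly into the interior by adding $\epsilon > 0$ at each non-maximal coordinate (compensating at $\theta^\ast$), connect the perturbed endpoints inside the interior where Lemma \ref{le:bound} bounds the optimisers in \eqref{eq:11} and hence controls the cost, then interpolate back to the original endpoints. A careful bookkeeping of the three contributions, using the majoration from Lemma \ref{le:bound} to pass to the limit under the integral as in the proof of Theorem \ref{the:LDP}, will show that the total connection cost tends to zero as $|y - y'| \to 0$. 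With continuity of $J$ established, every Borel set $W$ automatically satisfies $\inf_{\text{int}\,W} J = \inf_{\text{cl}\,W} J$ on the points that matter, yielding the LDP in the standard sense for the weak topology.
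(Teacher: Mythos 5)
Your overall strategy coincides with the paper's: Theorem \ref{inv-LDP} is proved by verifying the hypotheses of Theorem 2.1 in Puhalskii \cite{Puh21}, with Theorem \ref{the:LDP} supplying the trajectorial LDP, compactness of $\mathbb S_{\abs{\Theta}}$ giving exponential tightness, and Proposition 4 of Antunes et al. \cite{Rob08} handling attraction to $A$. What you present as ``continuity of $y\mapsto\Phi(\nu(\rho),y)$'' is, in the paper, the verification of the local connectivity hypotheses of the cited theorem (its parts 4(c)--(e)): nearby points must be joined, in both directions, by trajectories of arbitrarily small cost, and continuity of the deviation function then comes out of the framework. You also leave tacit two further hypotheses that the paper checks explicitly and that do not reduce to the Lyapunov analysis of \cite{Rob08}: joint lower semicontinuity of $I^\ast_y(\by)$ in $(y,\by)$ together with compactness of the union of sublevel sets over all initial points (done via a Young-inequality equicontinuity bound and Arzel\`a--Ascoli), and the additivity in time of the rate function. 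These are routine but belong in the verification.

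The genuine gap is in the decisive estimate, which you assert rather than prove: that the cost of your three-piece connection tends to zero as $\abs{y-y'}\to0$. The proposed mechanism --- ``the majoration from Lemma \ref{le:bound} to pass to the limit under the integral as in the proof of Theorem \ref{the:LDP}'' --- does not deliver this as stated. Lemma \ref{le:bound} bounds products such as $e^{\lambda_{\theta+e_k}-\lambda_\theta}y_\theta$ at the optimiser of \eqref{eq:11}; to bound $L(\by(s),\dot\by(s))$ itself one must convert this into a bound on the increments of $\lambda$, which degenerates like $\ln(1/\min_\theta\by_\theta(s))$ on the segments that touch the boundary, and one must then show that this logarithmic singularity is integrable and that the resulting integral vanishes with $\abs{y-y'}$ (this also forces a coupling of $\epsilon$ to $\abs{y-y'}$, since the interior segment costs on the order of $\abs{y-y'}\ln(1/\epsilon)$). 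Moreover, the dominated-convergence comparison of perturbed and unperturbed trajectories used in Lemma \ref{le:postive} is not the relevant mechanism here: there is no reference trajectory of known finite cost to compare with, and what is needed is an absolute bound on the connection cost. This is precisely the computation the paper carries out: it connects $y$ to $\nu(\rho)$ (and back) by a straight segment --- no interior perturbation is needed because $\nu(\rho)$ is strictly positive entrywise, so along the segment every coordinate grows at least linearly in $t$ --- bounds $H$ from below as in \eqref{eq:43}, applies Jensen's inequality along a path in $\Theta$, and optimises over $\lambda_{\hat\theta}-\lambda_{\tilde\theta}$ to obtain the integrable majorant $d_1+d_2\ln(d(\nu(\rho),y)/t)$, whose integral vanishes as $d(\nu(\rho),y)\to0$; the analogous bound handles arbitrary nearby pairs in part 4(e). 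Your endpoint-perturbation scheme can very likely be pushed through with an explicit estimate of this type, but as written the key quantitative step is missing and its attribution to Lemma \ref{le:bound} alone is not correct.
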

\begin{proof}
  The proof is done by applying Theorem 2.1 in Puhalskii \cite{Puh21}.
Since the set $\mathbb S_{\abs{\Theta}}$ is compact so that the
measures $\mu^{(n)}$ are exponentially tight and $I^\ast(\by)=\infty$
unless $\by\in\C(\R_+,\mathbb S_{\abs{\Theta}})$\,, one needs 
to check the following
  requirements:
  \begin{enumerate}
  \item if $y^{(n)}\to y$\,, then the distributions of $Y^{(n)}$ satisfy
    an LDP with $I^\ast_y$\,,
\item the function $I^\ast_y(\by)$ is lower semicontinuous in
  $(y,\by)$ and the set $\cup_{y\in\mathbb S_{\abs{\Theta}} }
\{\by\in\C(\R_+,\mathbb S_{\abs{\Theta}}):\,I^\ast_y(\by)\le\eta\}$
  is compact\,, for all $\eta\ge0$\,,
\item for all $\by\in\C(\R_+,\mathbb S_{\abs{\Theta}})$\,,
  \begin{equation*}
    I^\ast_y(\by)=\inf_{\by'\in p_s^{-1}(p_s \by)}I^\ast_y(\by')
+I^\ast_{\by(s)}(\vartheta_s\by)\,,
  \end{equation*}
\item\begin{enumerate}
\item if $I^\ast_y(\by)=0$\,, then $\inf_{t\ge0}d(\by(t),A)=0$\,,
where $d$ is a metric on $\mathbb S_{\abs{\Theta}}$\,,
\item if $\by(t)=\nu(\rho)$\,, for all $t\ge0$\,, then
  $I^\ast_{\nu(\rho)}(\by)=0$\,, where $\rho\in A$\,,
\item for any $\rho,\tilde\rho\in A$\,, there exists $t>0$ such that
  $\Phi_t(\nu(\rho),\nu(\tilde\rho))<\infty$\,,
\item for any $\epsilon>0$\,, there exists $\eta>0$ such that if
  $d(y,A)<\eta$\,, then 
$\Phi_{s_0}(y,\nu(\rho))<\epsilon$ and 
$\Phi_{s_1}(\nu(\rho),y)<\epsilon$\,, for some $s_0>0$\,, $s_1>0$ and
$\rho\in A$\,,
\item for any $y\in\mathbb S_{\abs{\Theta}}$ and 
$\epsilon>0$\,, there exist $\eta>0$\,, $t_0$ and $t_1$ such that 
   $\Phi_{t_0}(y,\tilde y)<\epsilon$ and 
$\Phi_{t_1}(\tilde y,y)<\epsilon$ provided  $d(y,\tilde y)<\eta$\,.
\end{enumerate}
  \end{enumerate}
Part 1 holds by Theorem \ref{the:LDP}.
 Part 2 is a consequence of Young's product inequality:
by \eqref{eq:11}\,, for  $s\le t$\,,  $\lambda\in\R^\Theta$\,, 
and $\epsilon>0$\,,
\begin{equation*}
  \lambda\cdot (\by(t)-\by(s))\le\epsilon \int_s^t L(\by(u),\dot\by(u))\,du
+\epsilon\int_s^tH(\by(u),\frac{\lambda}{\epsilon})\,du\,.
\end{equation*}
As $t-s\to0$\,, with $s$ and $t$
being bounded,
the second term on the righthand side goes to $0$ uniformly over $\by$
and over $\lambda$ from a bounded set.
The first term is 
bounded above by $\epsilon\int_0^\infty
L(\by(u),\dot\by(u))\,du\le\epsilon\eta$\,, so, it can be made small
uniformly over $s$ and $t$\,. The needed property holds by
  Arzela--Ascoli's theorem. For part 3,  note that, by \eqref{eq:15},
$\inf_{\by'\in p_s^{-1}(p_s \by)}I^\ast_y(\by')=\int_0^s
L(\by(t),\dot\by(t))\,dt$ and
$I^\ast_{\by(s)}(\vartheta_s\by)=
\int_0^\infty
L(\vartheta_s\by(t),d/dt\,(\vartheta_s\by(t)))\,dt=
\int_s^\infty
L(\by(t),\dot\by(t))\,dt\,.$

As for part 4, Proposition 4 in
Antunes et al. \cite{Rob08} implies that if $\by$ satisfies \eqref{eq:27}, then $\by(t)$
converges, as $t\to\infty$\,, to the set $A$\,,
which verifies the requirement of part 4(a). Part 4(b), essentially,
is about the definition of $\nu(\rho)$\,. For part 4(c), one  can take $t=1$ and
$\by(s)=(1-s\wedge1)\nu(\rho)+s\wedge1\, \nu(\tilde\rho)$\,. Part
4(d) is addressed next. 
Given $\rho$ such that $0<d(y,\nu(\rho))<\eta$\,, one lets
$\by(t)=y+t(\nu(\rho)-y)/d(\nu(\rho),y)$\,. Then, $\by(0)=y$\,, 
$\by(d(\nu(\rho),y))=\nu(\rho)$ and 
\begin{equation}
  \label{eq:42}
\Phi_{d(\nu(\rho),y)}(y,\nu(\rho))\le
\int_0^{d(\nu(\rho),y)}L(\by(t),\dot\by(t))\,dt\,.
\end{equation}
If $t\le d(\nu(\rho),y)$\,, then
 $\by(t)\ge t\nu(\rho)/d(\nu(\rho),y)$ entrywise, so that, on
 recalling that the set of $\rho$ is finite, there exists $\kappa>0$
 such that
$\by_\theta(t)\ge t\kappa/d(\nu(\rho),y) $\,, for  all
$\theta\in\Theta$\,.
By the definition of $H(y,\lambda)$ in \eqref{eq:13},
\begin{multline}
  \label{eq:43}
       H(\by(t),\lambda)\ge
\min_{k}(\alpha_k\wedge\delta_k)\sum_{k=1}^K(\sum_{\theta\in \Theta^+_k}
e^{\lambda_{\theta+ e_k}-\lambda_{\theta}}
+\sum_{\theta\in \Theta^-_k}
e^{\lambda_{\theta- e_k}-\lambda_{\theta}}\theta_k)
\frac{\kappa}{d(\nu(\rho),y)}\,
t\\
-\sum_{k=1}^K
\bl(\alpha_k+
\abs{\Theta}(\delta_k+
\gamma_k) \br)\,.
\end{multline}
Let  $\lambda_{\tilde\theta}=\min_{\theta\in\Theta}\lambda_\theta$ and
$\lambda_{\hat\theta}=\max_{\theta\in\Theta}\lambda_\theta$\,.
Let
$\tilde\theta=\theta_0,\theta_1,\ldots,\theta_\ell=\hat\theta$ represent a
path from $\tilde\theta$ to $\hat\theta$\,.
By Jensen's inequality,
\begin{equation*}
  \sum_{k=1}^K\sum_{\theta\in
  \Theta^{\pm}_k}
e^{\lambda_{\theta\pm e_k}-\lambda_{\theta}}\ge
\sum_{i=1}^\ell
e^{\lambda_{\theta_i}-\lambda_{\theta_{i-1}}}
\ge\ell
e^{\sum_{i=1}^\ell(\lambda_{\theta_i}-\lambda_{\theta_{i-1}})/\ell}
=\ell
e^{(\lambda_{\hat\theta}-\lambda_{\tilde\theta})/\ell}\,.
\end{equation*}
Assuming that $\lambda_{\tilde\theta}<\lambda_{\hat\theta}$ so that $\ell\ge1$
obtains that
\begin{multline*}
  \sum_{\theta\in\Theta}\lambda_\theta\dot{\by}_\theta(t)-H(\by(t),\lambda)
\le\abs{\Theta}( \lambda_{\hat\theta}-\lambda_{\tilde\theta})
\frac{\max_{\theta\in\Theta}\abs{\nu_{\theta}(\rho)-y_\theta}}{d(\nu(\rho),y)}\\-
\min_{k}(\alpha_k\wedge\delta_k)
e^{(\lambda_{\hat\theta}-\lambda_{\tilde\theta})/\abs{\Theta}}
\frac{\kappa}{d(\nu(\rho),y)}\, t+\sum_{k=1}^K
\bl(\alpha_k+
\abs{\Theta}(\delta_k+
\gamma_k) \br)\,.
\end{multline*}
A similar inequality holds if all the $\lambda_\theta$ in \eqref{eq:43}
are the same. Maximisation over
$\lambda_{\hat\theta}-\lambda_{\tilde\theta}$ shows that
 the latter righthand side is bounded above by 
$d_1+d_2\ln(d(\nu(\rho),y)/t)$\,,
for suitable constants $d_1$ and $d_2$\,. Hence,
 the integral on the right of
 \eqref{eq:42} converges to zero as
$\eta\to0$\,. The argument for $\Phi_{s_1}(\nu(\rho),y)$ is
similar: one introduces
$\by(t)=\nu(\rho)+t(y-\nu(\rho))/d(\nu(\rho),y)$\,, notes that
$\by_\theta(t)\ge(1-t)\nu_\theta(\rho)/d(\nu(\rho),y)$  and uses a similar
bound to \eqref{eq:43}.
The checking of part 4(e) is done analogously.
\end{proof}
\begin{remark}
  \label{re:ustkomp}
It is noteworthy that if $\Phi(\nu(\rho'),\nu(\rho''))=0$\,, for some $\rho'$\,,
$\rho''$\,, then $\rho'$ may be omitted in \eqref{eq:36}.
\end{remark}
\begin{remark}
  Interestingly enough,  the quantities $J(\nu(\rho))\,, \rho\in A\,,$ are unique
solutions to the  system of the balance equations that,
for any partition $\{A',A''\}$ of $A$\,,
\begin{equation*}
\inf_{\rho'\in A'}\inf_{\rho''\in A''}
\bl(J(\nu(\rho'))+\Phi(\nu(\rho'),\nu(\rho''))\br)  
=\inf_{\rho'\in A'}\inf_{\rho''\in A''}
\bl(J(\nu(\rho''))+\Phi(\nu(\rho''),\nu(\rho'))\br)  
\end{equation*}
subject to the normalisation condition that 
$\inf_{\rho\in A}J(\nu(\rho))=0$\,, see Puhalskii \cite{Puh21}.
\end{remark}
The next result concerns metastability. It 
is in the spirit of Freidlin and Wentzell \cite{wf2},
see also Shwartz and Weiss \cite{SchWei95}.
 It is also similar to
 Corollary 3.1 in Tibi \cite{Tib10}, where a proof is
outlined assuming a trajectorial LDP. 
As the argument in Tibi \cite{Tib10}  depends
 on certain
contentions  in Freidlin and Wentzell \cite{wf2} being true whose proofs are not
available in the literature, a self--contained proof of
  Theorem \ref{le:exit} is provided  in the appendix.
As before, $P_y$ and 
 $E_y$ denote probability and  expectation, respectively,
 that correspond to the initial
condition
$Y^{(n)}(0)=y$\,. \begin{theorem}
  \label{le:exit}
 Let $\nu(\rho)$ be an  equilibrium of \eqref{eq:27} and let
$D$ be an open   subset of $\mathbb S_\Theta$\,,
  which contains $\nu(\rho)$\,.
Suppose that the solutions of
\eqref{eq:27} with initial conditions in some
 neighbourhood of $ D$ 
converge to $\nu(\rho)$  and stay in $D$ when started in $D$\,. Let 
$\tau^{(n)}=\inf\{t\ge0:\,Y^{(n)}(t)\not\in D\}$\,.
Let $y^{(n)}\in  D\cap S^{(n)}_\Theta$\,.
If  $y^{(n)}\to y\in D$\,, as $n\to\infty$\,, then
\begin{equation*}
  P_{y^{(n)}}\bl(\abs{\frac{1}{n}\,\ln\tau^{(n)}- U}>\kappa\br)\to0
 \end{equation*}
 and
\begin{equation*}
  \frac{1}{n}\,\ln E_{y^{(n)}}(\tau^{(n)})^m\to
mU\,,
\end{equation*}
where $m\in\N$\,, 
\begin{equation*}
   U=\inf_{y'\not\in D}\Phi(\nu(\rho),y')
\end{equation*}
and $\kappa>0$ is otherwise arbitrary.
\end{theorem}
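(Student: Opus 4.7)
The plan is to adapt the Freidlin--Wentzell exit-time analysis, using the trajectorial LDP of Theorem \ref{the:LDP} as the substitute for their diffusion-based input and the strong Markov property of $Y^{(n)}$ to iterate local estimates. The two halves of the logarithmic asymptotic for $\tau^{(n)}$ are proved separately, and the moment bounds are then extracted from exponential tail estimates implicit in the probability bounds.

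For the upper bound $P_{y^{(n)}}(\tau^{(n)}>\exp(n(U+\kappa)))\to 0$, I first pick, by the definition of $U$, a neighbourhood $V$ of $\nu(\rho)$ with $\overline V\subset D$, a time horizon $T$ and an absolutely continuous path $\by^\star$ with $\by^\star(0)=\nu(\rho)$, $\by^\star(T)\notin\overline D$ and $I^\ast_{\nu(\rho)}(\by^\star)\le U+\kappa/4$. The LDP lower bound of Theorem \ref{the:LDP} on a small tube around $\by^\star$, together with the upper semicontinuity of $\Pi_y(\by)$ in $(y,\by)$ supplied by Lemma \ref{le:usc}, yields uniformly over $y'\in\overline V$ that $P_{y'}(\tau^{(n)}\le T)\ge \exp(-n(U+\kappa/3))$ for all large $n$. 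Since every deterministic solution of \eqref{eq:27} starting in a neighbourhood of $D$ enters $V$ in some uniform time $T_0$, the law of large numbers and exponential tightness guarantee that $Y^{(n)}$ visits $\overline V$ within $T_0+1$ with probability tending to one. Concatenating cycles of length $T_0+T+1$ via the strong Markov property, the number of cycles needed is stochastically dominated by a geometric random variable with success probability at least $\exp(-n(U+\kappa/2))$, which forces $\tau^{(n)}\le \exp(n(U+\kappa))$ with probability tending to one and, after iterating once more, gives a tail estimate $P_{y^{(n)}}(\tau^{(n)}>k\exp(n(U+\kappa)))\le \exp(-k)$ for all large $n$.

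For the lower bound $P_{y^{(n)}}(\tau^{(n)}<\exp(n(U-\kappa)))\to 0$, I would use the standard cycle decomposition. Fix $V\subset W$, two open neighbourhoods of $\nu(\rho)$ with $\overline W\subset D$, and inductively define stopping times $\tau_k$ at which $Y^{(n)}$ hits $\overline V$ after having previously exited $W$, interlacing them with exits from $W$. The first key input is that, by lower semicontinuity of $I^\ast_{y}(\by)$ in $(y,\by)$ and compactness of its sublevel sets, $\inf_{y'\in\overline V,\,t\ge 0}\Phi_t(y',D^c)\to U$ as $V$ shrinks to $\{\nu(\rho)\}$. Applying the trajectorial LDP upper bound uniformly over $y'\in\overline V$ therefore produces $P_{y'}(\tau^{(n)}\le T_1)\le \exp(-n(U-\kappa/4))$ on any fixed window $[0,T_1]$ for all large $n$. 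The second input is that, by attractivity of $\nu(\rho)$ and Theorem \ref{the:LDP}, each return from $\partial W$ to $\overline V$ takes time at most $\exp(n\kappa/8)$ with exponentially high probability, so the number of cycles completed by time $\exp(n(U-\kappa))$ is at most $\exp(n(U-7\kappa/8))$ with high probability, and a union bound over cycles closes the argument.

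The moment asymptotics follow from the probability asymptotics. The upper bound on $(1/n)\ln E_{y^{(n)}}(\tau^{(n)})^m$ reduces to the tail estimate displayed at the end of the second paragraph, after which $E_{y^{(n)}}(\tau^{(n)})^m=m\int_0^\infty t^{m-1}P_{y^{(n)}}(\tau^{(n)}>t)\,dt$ is routinely controlled; the lower bound is immediate from Markov's inequality applied to $\tau^{(n)}\ge \exp(n(U-\kappa))$ on the event from the previous paragraph. The main obstacle, and the place where the diminishing rates assert themselves, is in producing the uniform LDP upper bound across $y'\in\overline V$ together with the continuity statement $\inf_{y'\in\overline V,\,t\ge 0}\Phi_t(y',D^c)\to U$: one must rule out pathological behaviour of near-optimal exit paths when $\nu(\rho)$ or portions of $\partial D$ approach the boundary of $\mathbb S_{\abs{\Theta}}$, where $L$ is singular. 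The optimiser bound in Lemma \ref{le:bound}, together with the Markov-type decomposition afforded by Lemma \ref{le:markov}, is what should make this uniform control feasible, precisely patching the gap referred to in the discussion of Tibi \cite{Tib10}.
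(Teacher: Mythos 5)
Your overall architecture is the same as the paper's (reach a small ball around $\nu(\rho)$, get a per-window exit probability of order $e^{-n(U+\beta)}$ uniformly in the starting point and iterate geometrically for the upper half and the moments; cycle decomposition plus a per-excursion LDP upper bound of order $e^{-n(U-\kappa)}$ for the lower half), but two of your key steps do not work as stated. First, in the lower bound your cycle count is backwards: an upper bound on the duration of each return from $\partial W$ to $\overline V$ cannot bound from above the number of cycles completed by time $e^{n(U-\kappa)}$; for that you need a \emph{lower} bound on excursion durations. This is exactly where the paper expends effort: it truncates the durations $(\tau^{(n)}_i-\sigma^{(n)}_i)\wedge 1$, shows their expectations are bounded below via the deterministic minimal travel time $T'$ from $B_{3\eta}\setminus B_{2\eta}$ to $B_\eta$, and then uses independence and Chebyshev's inequality to show that $\lfloor e^{n(U-2\kappa)}\rfloor$ cycles cannot fit into time $e^{n(U-3\kappa)}$. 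Without some version of this argument the union bound over cycles does not close.

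Second, the mechanism you invoke for uniformity of the LDP lower bound over starting points is the wrong one. Upper semicontinuity of $\Pi_y(\by)$ (Lemma \ref{le:usc}) gives $\limsup_n\Pi_{y_n}(\cdot)\le\Pi_y(\cdot)$, i.e.\ it controls probabilities from above along converging initial points, and cannot produce the uniform \emph{lower} bound $P_{y'}(\tau^{(n)}\le T)\ge e^{-n(U+\kappa/3)}$. What is actually needed, and what the paper constructs, is for every starting point (in fact every $y'\in D$, since the geometric iteration restarts from an arbitrary point of $D$) a path exiting a fixed neighbourhood $D_\eta$ of $D$ with cost at most $U+\beta$ \emph{within a uniformly bounded time}: the cheap initial segment comes from following the flow of \eqref{eq:27} plus a short straight-line bridge (cost controlled as in part 4(d) of the proof of Theorem \ref{inv-LDP}, together with continuity of $\Phi(\nu(\rho),\cdot)$ to pass from $D$ to $D_\eta$), and the boundedness of the exit time is nontrivial because $\Phi$ is a $t\to\infty$ limit: the paper proves it by showing that any stretch of length $T$ spent in $\cl D_\eta\setminus B_\eta$ carries a fixed positive cost, so a near-optimal path can linger only a bounded number of such stretches, and then replaces the initial loop by a straight segment to also bound the last entrance time into $B_\eta$. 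Your attribution of this uniform control to Lemma \ref{le:bound} and Lemma \ref{le:markov} misses the point; those lemmas are used inside the proof of Theorem \ref{the:LDP}, not here. With these two repairs (minimal excursion durations via the truncation/Chebyshev argument, and the explicit bounded-time near-optimal path construction in place of the semicontinuity appeal), your outline matches the paper's proof; the remaining differences (Markov inequality instead of Jensen--Fatou for the moment lower bound, stating the upper half as a probability bound before the moment bound) are cosmetic.
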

In Antunes et al. \cite{Rob08}   stability of equilibria is
 tackled  via the
Lyapunov function 
\begin{equation*}
  g(y)=\sum_{\theta\in\Theta}y_\theta\ln(\prod_{k=1}^K\theta_k!\,y_\theta)-
\sum_{k=1}^K
\frac{\delta_k+\gamma_k}{\gamma_k}\,
\bl(u\ln u-u)\bigg|^{u=(\alpha_k+\gamma_k\sum_{\theta\in\Theta}\theta_ky_\theta)/(\delta_k+\gamma_k)}_{u=\alpha_k/(\delta_k+\gamma_k)}\,.
\end{equation*}
In the interior of
$\mathbb S_\Theta$\,, see  Antunes et al. \cite{Rob08},
\begin{equation*}
  \nabla g(y)\cdot
V(y)=\sum_{k=1}^K\sum_{\theta\in\Theta}
((\delta_k+\gamma_k)\theta_ky_\theta-b_k(y))
\ln\frac{b_k(y)}{(\delta_k+\gamma_k)\theta_ky_\theta}\,,
\end{equation*}
where
\begin{equation*}
b_k(y)=\alpha_k+\gamma_k\sum_{\theta\in\Theta}\theta_ky_{\theta}\,.
\end{equation*}
Hence, $\nabla g(y)\cdot
V(y)\le0$ so that $g(\by(t))$ is nonincreasing with $t$ along 
solutions of \eqref{eq:27} and $\nabla g(y)\cdot
V(y)<0$ provided $y $ is not an equilibrium  of \eqref{eq:27}.
Furthermore,  $y$ is an equilibrium  of \eqref{eq:27}
if and only if the differential of
$g$\,, as  a function on $\mathbb S_{\abs{\Theta}}$\,, is zero at $y$:
$dg_{_{\mathbb S_{_{\abs{\Theta}}}}}(y)=0$\,.
If $y$ is a local minimum of $g$\,, it is an asymptotically stable equilibrium. 
In order  ``to reduce dimension'', Antunes et al. \cite{Rob08}  introduce the function 
\begin{equation*}
  \phi(\rho)=-\ln
  Z(\rho)+\sum_{k=1}^K\bl(\frac{\gamma_k+\delta_k}{\gamma_k}\,\rho_k-
\frac{\alpha_k}{\gamma_k}\,\ln\rho_k\br)\,.
\end{equation*}
By Theorem 3 in Antunes et al. \cite{Rob08},  $\rho\in\R_+^K$ is 
a local minimum of $\phi$ if and only if $\nu(\rho)$ is a local
minimum of $g$\,; if $\rho$ is 
a saddle point of $\phi$\,, then $\nu(\rho)$ is 
a saddle point of $g$\,.
Besides, a calculation shows that if $\nu(\rho)$ is an equilibrium, then
\begin{equation*}
  g(\nu(\rho))=\phi(\rho)+\sum_{k=1}^K\frac{\alpha_k}{\gamma_k}\bl(\ln\frac{\alpha_k}{\gamma_k+\delta_k}-1\br)\,.
\end{equation*}

An example of bistability along the lines of the one 
in Antunes et al. \cite{Rob08} is analysed next. 
Let $K=2$\,. The polynomial equations for $(\rho_1,\rho_2)$ in
\eqref{eq:48} and
\eqref{eq:53} have finitely many solutions by B\'ezout's theorem as
the polynomials  in the two equations are coprime, see, e.g., Cox, Little
and O'Shea \cite{CoxLittOsh92}.  
Antunes et al. \cite{Rob08} show that, for a
certain choice of parameters
 there are at least two stable equilibria.  Suppose that class 1
 customers require one unit of capacity, so, $A_1=1$ whereas class 2
 customers require the whole capacity, so, $A_2=C$\,. Accordingly, 
class 1 and class 2 customers cannot coexist at the same node.
 It stands to reason that  there could be two 
stable states  where
  class 1 customers are prevalent or   class 2 customers
 are prevalent, respectively. This is substantiated next.

By hypotheses, $\theta_1$ takes values in the set
$\{0,1,\ldots,C\}$ and $\theta_2$ takes values in $\{0,1\}$\,.
Then, with $\rho=(\rho_1,\rho_2)$\,, 
\begin{equation}
  \label{eq:55}
  Z(\rho)=\sum_{i=0}^C \frac{\rho_1^i}{i!}+\rho_2
\end{equation}
and
\begin{equation*}
  \phi(\rho)=-\ln\bl(\sum_{i=0}^C\frac{\rho_1^i}{i!}+\rho_2\br)+
\frac{\gamma_1+\delta_1}{\gamma_1}\,\rho_1+\frac{\gamma_2+\delta_2}{\gamma_2}\,
\rho_2-\frac{\alpha_1}{\gamma_1}\,\ln\rho_1-\frac{\alpha_2}{\gamma_2}\,\ln\rho_2\,.
\end{equation*}
By \eqref{eq:48} and \eqref{eq:53} with $k=1$\,,
\begin{equation}
  \label{eq:1}
  Z(\rho)=\frac{\gamma_1\rho_1(\rho_2+\rho_1^C/C!)}{\alpha_1-
\delta_1\rho_1}
\end{equation}
(as $\sum_{\theta\in\Theta}\theta_1\nu_\theta(\rho)<\rho_1$\,,
$\alpha_1>\delta_1\rho_1$). By \eqref{eq:48} and \eqref{eq:53} with $k=2$\,, $\rho_2$ satisfies the
quadratic equation
\begin{multline*}
  \gamma_1(\gamma_2+\delta_2)\rho_2^2+
(\gamma_1(\gamma_2+\delta_2)\frac{\rho_1^{C}}{C!}-\alpha_2\gamma_1
-\gamma_2(\frac{\alpha_1}{\rho_1}-\delta_1))\rho_2-
\alpha_2\gamma_1\,\frac{\rho_1^{C}}{C!}=0\,.
\end{multline*}

Hence,
\begin{multline}
  \label{eq:18}
  \rho_2(\rho_1)=\frac{1}{2\gamma_1(\gamma_2+\delta_2)}
\Bl(-\gamma_1(\gamma_2+\delta_2)\frac{\rho_1^{C}}{C!}+\alpha_2\gamma_1
+\gamma_2(\frac{\alpha_1}{\rho_1}-\delta_1)\\+
\sqrt{\bl(\gamma_1(\gamma_2+\delta_2)
\frac{\rho_1^{C}}{C!}-\alpha_2\gamma_1
-\gamma_2(\frac{\alpha_1}{\rho_1}-\delta_1)\br)^2
+4\gamma^2_1(\gamma_2+\delta_2)\alpha_2\,\frac{\rho_1^{C}}{C!}}\;\Br)\,.
\end{multline}
Equating  the righthand sides of \eqref{eq:55} and
\eqref{eq:1} 
yields
\begin{equation*}
  h(\rho)=0\,,
\end{equation*}
where
\begin{equation}  \label{eq:59}
  h(\rho)=\sum_{i=0}^C \frac{\rho_1^i}{i!}+\rho_2-
\frac{\gamma_1\rho_1}{\alpha_1-\delta_1\rho_1}
\bl(\frac{\rho_1^{C}}{C!}+\rho_2\br)\,.
\end{equation}
Let $C=20$\,,  $\alpha_1=.5$\,, $\alpha_2=9$\,,
$\gamma_1=\gamma_2=1$ and $\delta_1=\delta_2=.01$\,.
(The parameters for which Antunes et al. \cite{Rob08} show the
existence of two stable equilibria are $C=20$\,, $\alpha_1=.68$\,,
$\alpha_2=9$\,, $\gamma_1=\gamma_2=1$ and $\delta_1=\delta_2=0$\,.
It is of interest to allow nonzero $\delta_1$ and (or)
$\delta_2$\,.) For $\rho_1$ close to zero, the  term $\alpha_1/\rho_1$
on the right of
  \eqref{eq:18} dominates,  $\rho_2$
decreases rapidly as $\rho_1$ increases,
 so does the righthand side of \eqref{eq:59}, its first zero being 
 $\rho_1\approx.5966$\,. 
The righthand side of \eqref{eq:59} keeps decreasing as the leftmost sum  starts
 taking over until it reaches a
 minimum of approximately $-23.556$ at $\rho_1\approx2.8861$
and begins to increase and crosses the zero level for a
 second time for $\rho_1\approx4.1786$\,. It keeps growing reaching a
 maximum of approximately $1.5794\cdot 10^5$ at 
$\rho_1\approx12.896$ until another
 change of a dominating term when 
$-\gamma_1\rho_1/(\alpha_1-\rho_1\delta_1)\,\rho_1^{C}/C!$
 takes over. The righthand side of
\eqref{eq:59} then plunges to $-\infty$\,,  crossing  the zero
level for a third time at  $\rho_1\approx13.72715$ in the process.
Graphs in Fig.1 and Fig.2 provide an illustration.
\begin{figure}[h!]
  \centering
  \includegraphics[scale=.7]{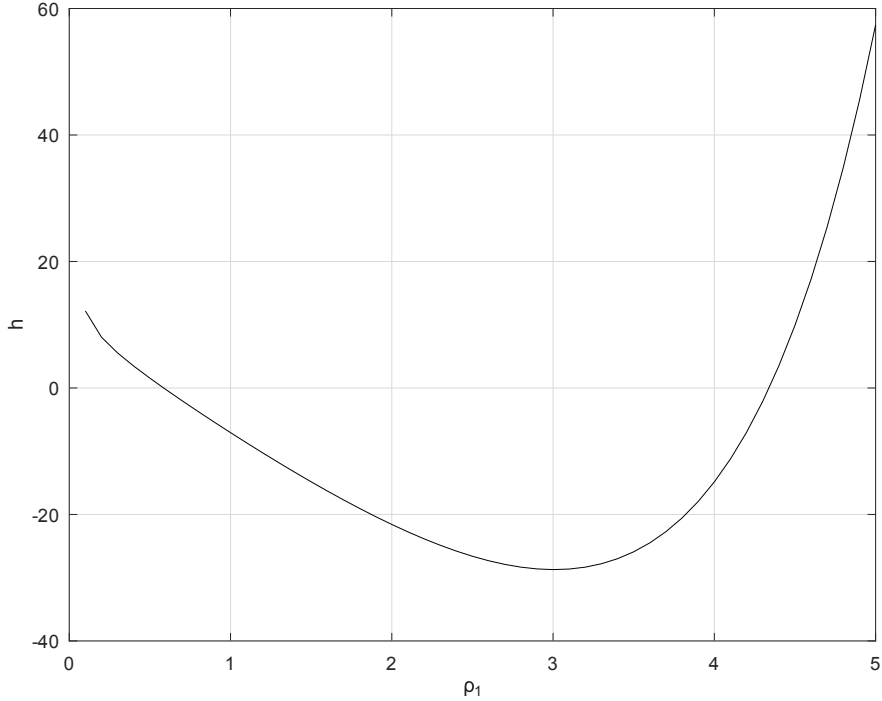}
  \caption{Function $h(\rho_1,\rho_2(\rho_1))$ for $\rho_1$ small\,}
      \label{fig:1}
\end{figure}
\begin{figure}[h!]
  \centering
  \includegraphics[scale=.7]{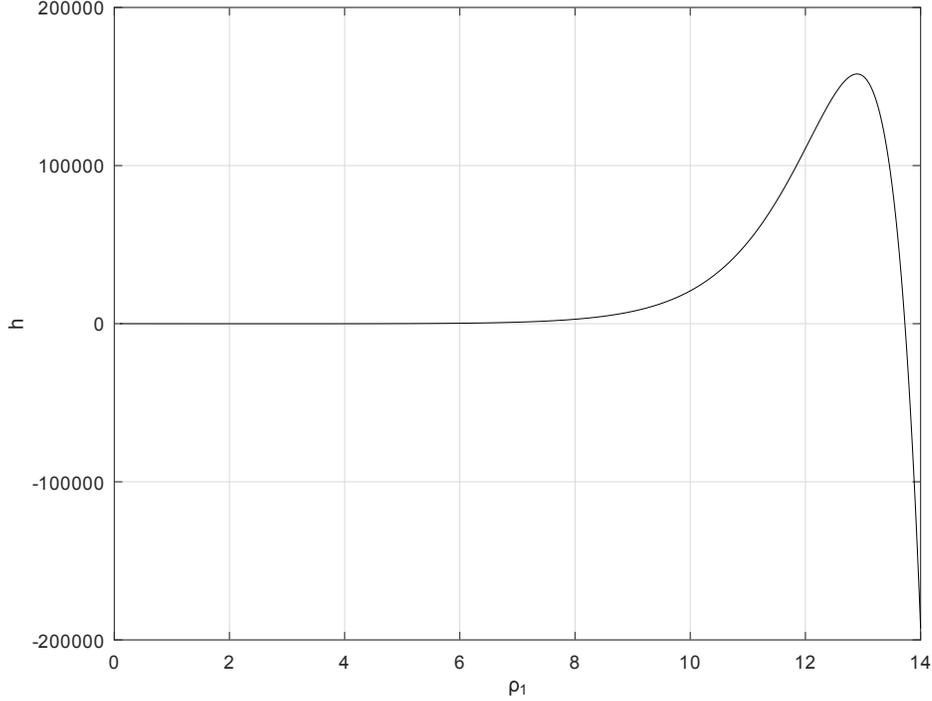}
  \caption{Function $h(\rho_1,\rho_2(\rho_1))$ globally}
      \label{fig:2}
\end{figure}
 Thus, all in all, there are three equilibria:
$\rho^{(1)}\approx(.5966,8.8293)$\,, $\rho^{(2)}\approx
(4.1786,8.1115)$\,, and  $\rho^{(3)}\approx(13.72715,8.9906)$\,.
  The  second derivatives of $\phi$  are
$\nabla^2\phi(\rho^{(1)})\approx\Bl(\begin{array}{cc}
       1.2633&0.016025\\0.016025&0.1243\end{array}\Br)$\,,
 $\nabla^2\phi(\rho^{(3)})\approx\Bl(\begin{array}{cc}
       0.015412&1.1085\cdot 10^{-6}\\1.1085
           \cdot 10^{-6}&0.1113 \end{array}\Br)$\,, and
$ \nabla^2\phi(\rho^{(2)})\approx\Bl(\begin{array}{cc}
       -0.069679&0.0121200\\0.012120&0.1370\end{array}\Br)$\,,
the eigenvalues in the latter case being
  $-0.070387$ and $0.137708$ approximately.
With $\nabla^2\phi(\rho^{(1)})$ and $\nabla^2\phi(\rho^{(3)})$ being
positive definite, $\rho^{(1)}$ and $\rho^{(3)}$ are local minima, whereas 
$\rho^{(2)}$ is a saddle point.
A 3D mesh plot of $\phi(\rho)$ with a contour plot underneath
 is in Fig.\ref{fig:6}.
\begin{figure}[h!]
  \centering
  \includegraphics[scale=.7]{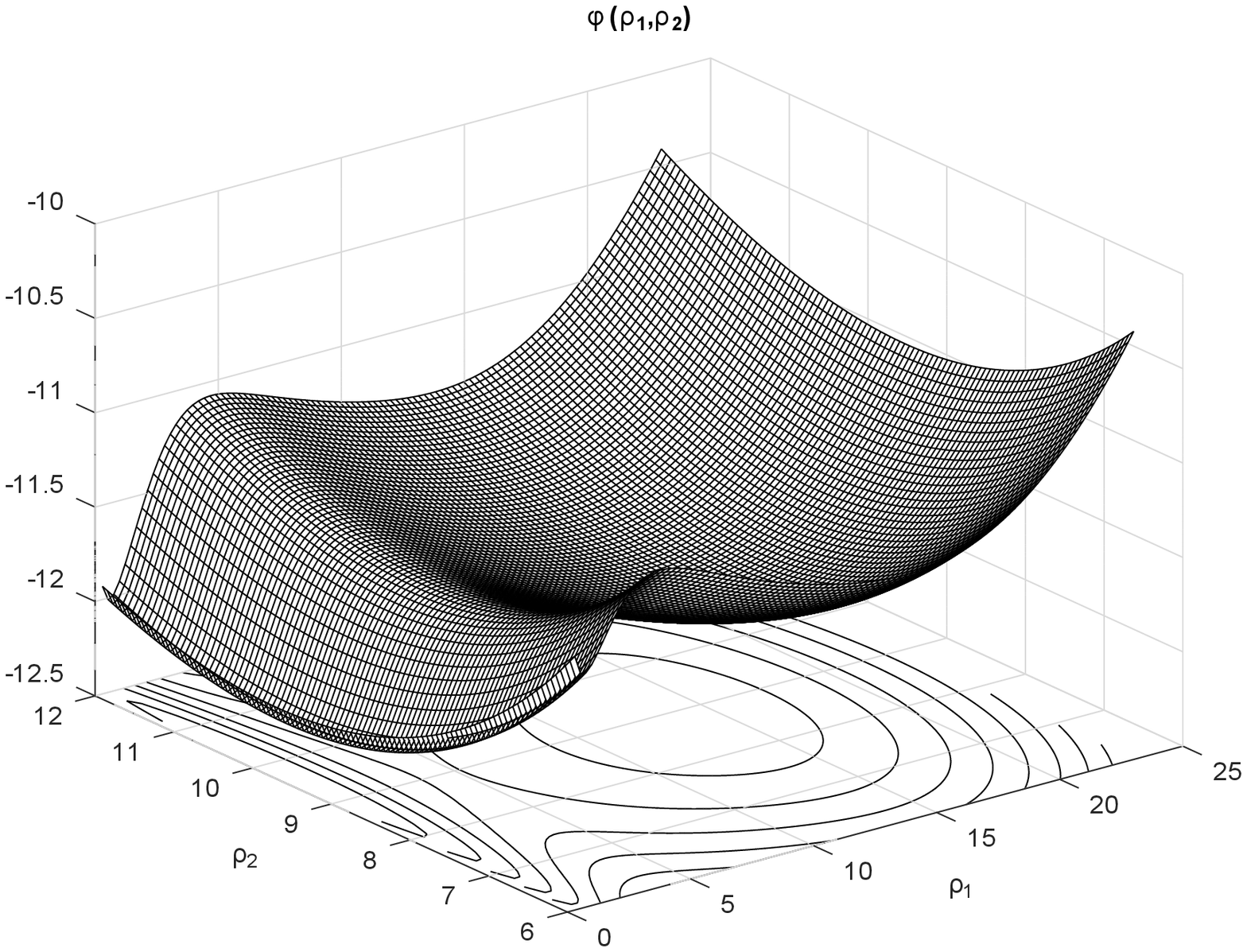}
  \caption{3D mesh plot and  contour plot of $\phi(\rho_1,\rho_2)$\,}
      \label{fig:6}
\end{figure}
Therefore, $\nu(\rho^{(1)}))$ and $\nu(\rho^{(3)})$ are asymptotically
stable
equilibria 
 so that
 the network process spends  exponentially long periods of time
   in the neighbourhoods of those equilibria, while 
$\nu(\rho^{(2)})$ is an unstable equilibrium.

The expected number of class $k$ customers being
\begin{equation*}
  EQ_k=\rho_k(1-\sum_{\theta:\,\sum_{k'} A_{k'}\theta_{k'}>C- A_k}\nu_\theta(\rho))
\end{equation*}
implies that the average numbers of class 1 and class 2 customers are
\begin{equation*}
  EQ_1=\rho_1(1-\frac{1}{Z(\rho)}\frac{\rho_1^C}{C!})
\end{equation*}
and
\begin{equation*}
  EQ_2=\rho_2(1-\frac{1}{Z(\rho)}
\bl(\sum_{\theta_1=1}^C\frac{\rho_1^{\theta_1}}{\theta_1!}+\rho_2\br))=\frac{\rho_2}{Z(\rho)}\,,
\end{equation*}
respectively.
For the stable equilibria, 
calculations yield $(EQ_1^{(1)},EQ_2^{(1)})\approx(.5966,.8281)$
and $(EQ_1^{(3)},EQ_2^{(3)})\approx(13.365,1.0235\cdot 10^{-5})$\,.
Thus, for $\rho=\rho^{(1)}$\,, class 2 customers are prevalent and, for 
$\rho=\rho^{(3)}$\,, class 1 customers are prevalent.
(The pattern of $h(\rho)$
first decreasing, then increasing and
decreasing again is sensitive to the values of $\delta_1$ and
$\delta_2$\,. When $\delta_1=\delta_2=.1$\,, only a downward trend is
present, so, there is only one equilibrium.)

Also, calculations yield 
$\phi(\rho^{(1)})=-12.284$\,, $\phi(\rho^{(2)})=-11.560$\,,
$\phi(\rho^{(3)})=-12.043$\,
so that $g(\nu(\rho^{(2)}))>g(\nu(\rho^{(1)}))\vee g(\nu(\rho^{(3)}))$\,.Thus, if  $\by(0)=\nu(\rho^{(2)})$ experiences a small displacement $\Delta$
at time zero in a
direction collinear with the direction of
 the eigenvector with a negative eigenvalue, then
$g(y+\Delta)<g(y)$\,, so that the associated trajectory $\by(t)$ will
end up in one of the equilibria $\nu(\rho^{(1)})$  or
$\nu(\rho^{(3)})$\,.
Hence, either $\Phi(\nu(\rho^{(2)}),\nu(\rho^{(1)})=0$
or $\Phi(\nu(\rho^{(2)}),\nu(\rho^{(3)}))=0$\,.

On denoting $\nu^{(i)}=\nu(\rho^{(i)})$\,, by \eqref{eq:2},
\begin{align*}
    J(\nu^{(1)})=(\Phi(\nu^{(2)},\nu^{(1)})+\Phi(\nu^{(3)},\nu^{(2)}))
\wedge (\Phi(\nu^{(2)},\nu^{(1)})+\Phi(\nu^{(3)},\nu^{(1)}))\\\wedge
(\Phi(\nu^{(3)},\nu^{(1)})+\Phi(\nu^{(2)},\nu^{(3)}))-\Psi\,,\\
  J(\nu^{(2)})=(\Phi(\nu^{(1)},\nu^{(2)})+\Phi(\nu^{(3)},\nu^{(1)}))
\wedge (\Phi(\nu^{(1)},\nu^{(2)})+\Phi(\nu^{(3)},\nu^{(2)}))\\\wedge
(\Phi(\nu^{(3)},\nu^{(2)})+\Phi(\nu^{(1)},\nu^{(3)}))-\Psi\,,\\
 J(\nu^{(3)})=(\Phi(\nu^{(2)},\nu^{(3)})+\Phi(\nu^{(1)},\nu^{(2)}))
\wedge (\Phi(\nu^{(2)},\nu^{(3)})+\Phi(\nu^{(1)},\nu^{(3)}))\\\wedge
(\Phi(\nu^{(1)},\nu^{(3)})+\Phi(\nu^{(2)},\nu^{(1)}))-\Psi\,,
\end{align*}
where
\begin{align*}
  \Psi=(\Phi(\nu^{(2)},\nu^{(1)})+\Phi(\nu^{(3)},\nu^{(2)}))
\wedge (\Phi(\nu^{(2)},\nu^{(1)})+\Phi(\nu^{(3)},\nu^{(1)}))\wedge
(\Phi(\nu^{(3)},\nu^{(1)})+\Phi(\nu^{(2)},\nu^{(3)}))\\\wedge
(\Phi(\nu^{(1)},\nu^{(2)})+\Phi(\nu^{(3)},\nu^{(1)}))
\wedge (\Phi(\nu^{(1)},\nu^{(2)})+\Phi(\nu^{(3)},\nu^{(2)}))\wedge
(\Phi(\nu^{(3)},\nu^{(2)})+\Phi(\nu^{(1)},\nu^{(3)}))\\\wedge
(\Phi(\nu^{(2)},\nu^{(3)})+\Phi(\nu^{(1)},\nu^{(2)}))
\wedge (\Phi(\nu^{(2)},\nu^{(3)})+\Phi(\nu^{(1)},\nu^{(3)}))\wedge
(\Phi(\nu^{(1)},\nu^{(3)})+\Phi(\nu^{(2)},\nu^{(1)}))\,.
\end{align*}
As a consequence, if $\Phi(\nu^{(2)},\nu^{(1)})=0$\,, then
\begin{align*}
    J(\nu^{(1)})=\Phi(\nu^{(3)},\nu^{(2)})
\wedge \Phi(\nu^{(3)},\nu^{(1)})-\Psi\,,\\
  J(\nu^{(2)})=(\Phi(\nu^{(1)},\nu^{(2)})+\Phi(\nu^{(3)},\nu^{(1)}))
\wedge (\Phi(\nu^{(1)},\nu^{(2)})+\Phi(\nu^{(3)},\nu^{(2)}))\\\wedge
(\Phi(\nu^{(3)},\nu^{(2)})+\Phi(\nu^{(1)},\nu^{(3)}))-\Psi\,,\\
 J(\nu^{(3)})=(\Phi(\nu^{(2)},\nu^{(3)})+\Phi(\nu^{(1)},\nu^{(2)}))
\wedge 
\Phi(\nu^{(1)},\nu^{(3)})-\Psi
\end{align*}
and
\begin{align*}
  \Psi=\Phi(\nu^{(3)},\nu^{(2)})
\wedge \Phi(\nu^{(3)},\nu^{(1)})
\wedge
(\Phi(\nu^{(2)},\nu^{(3)})+\Phi(\nu^{(1)},\nu^{(2)}))
\wedge
\Phi(\nu^{(1)},\nu^{(3)})\,.
\end{align*}
It is being conjectured that, furthermore,
$\Phi(\nu^{(2)},\nu^{(1)})=\Phi(\nu^{(2)},\nu^{(3)})=0$
and that $\Phi(\nu^{(3)},\nu^{(2)})\le
\Phi(\nu^{(3)},\nu^{(1)})$
and $\Phi(\nu^{(1)},\nu^{(2)})\le
\Phi(\nu^{(1)},\nu^{(3)})$\,, in which case the expressions above
simplify. \appendix
\section{Proof of Theorem \ref{le:exit} }
\label{sec:appendix}
The proof is along the lines of the developments in Freidlin and Wentzell
\cite{wf2} and Shwartz and Weiss \cite{SchWei95}.
Suppose, it has been proved that
\begin{align}
  \label{eq:9}
  \limsup_{n\to\infty}\frac{1}{n }\,
\ln E_{y^{(n)}}(\tau^{(n)})^m\le m
U\intertext{ and }
\lim_{n\to\infty}P_{y^{(n)}}\bl(\frac{1}{n}\,\ln\tau^{(n)}\le
  U-3\kappa\br)=0\,.
\label{eq:9'}
\end{align}
By Markov's inequality and \eqref{eq:9} with $m=1$\,,
\begin{equation*}
\limsup_{n\to\infty}  P_{y^{(n)}}\bl(\frac{1}{n}\,\ln\tau^{(n)}\ge
  U+\kappa\br)^{1/n}\le
\limsup_{n\to\infty}( E \tau^{(n)})^{1/n}e^{-(U+\kappa)}\le e^{-\kappa}
\end{equation*}
so that 
\begin{equation*}
    \lim_{n\to\infty}  P_{y^{(n)}}\bl(\frac{1}{n}\,\ln\tau^{(n)}\ge
  U+\kappa\br)=0\,.
\end{equation*}
By Jensen's inequality, for $\epsilon>0$\,,
\begin{equation*}
  \frac{1}{n }\,
\ln E_{y^{(n)}}(\tau^{(n)}\vee\epsilon)^m\ge \frac{m}{n }\,
E_{y^{(n)}}(\ln \tau^{(n)}\vee\ln\epsilon)\,.
\end{equation*}
By \eqref{eq:9'} and Fatou's lemma, 
\begin{equation*}
  \liminf_{n\to\infty}
E_{y^{(n)}}\frac{1}{n }\,(\ln \tau^{(n)}\vee\ln\epsilon)
\ge U\,.
\end{equation*}
On the other hand,
\begin{equation*}
  \ln E_{y^{(n)}}(\tau^{(n)}\vee\epsilon)^m\le 
\ln(E_{y^{(n)}}(\tau^{(n)})^m+\epsilon^m)\le
%\ln(2(E_{y^{(n)}}(\tau^{(n)})^m\vee\epsilon^m))=
\ln2 +\ln( E_{y^{(n)}}(\tau^{(n)})^m)\vee\ln\epsilon^m\,.
\end{equation*}
Hence,
\begin{equation*}
  \liminf_{n\to\infty}\frac{1}{n}\,\ln E_{y^{(n)}}(\tau^{(n)})^m\ge m U\,.
\end{equation*}
Next, \eqref{eq:9} and \eqref{eq:9'} are proved.

For $r>0$\,, let $B_r$ denote the
open ball of radius $r$ about $\nu(\rho)$\,. 
Let $\eta>0$ be small enough for $B_{3\eta}$ to belong to $D$\,.
Let $T(y')$ denote the length of time that it takes
the solution $\by$ of \eqref{eq:27} with $y'$ as an initial point to reach
$\cl\! B_{\eta/2}$\,, where $y'\in\cl\! D$\,. The
function $T(y')$ is upper semicontinuous. Since $D$ is bounded, so is
$T(y')$\,. Let $T_1=\max_{y'\in\cl D}T(y')$\,. Let
$\sigma^{(n)}$ represent the first time when
$Y^{(n)}$ reaches  the closed  ball $\cl\!B_\eta$\,.
Let $\by(t)$ solve \eqref{eq:27} with $\by(0)=y$\,.
Since
 $y^{(n)}\to y$ and $Y^n(t)\to \by(t)$ uniformly on $[0,T_1]$ in
$P_{y^{(n)}}$--probability,
it may be assumed that 
\begin{equation}
\label{eq:23}  P_{y^{(n)}}(\sigma^{(n)}\le T_1)\ge \frac{1}{2}\,,
\end{equation}
provided $n$ is great enough.

Lemma 3.1 in Puhalskii \cite{Puh21} implies that $\Phi(\nu(\rho),y')$ is a
continuous function of $y'$\,.
Therefore,
\begin{equation}
  \label{eq:30}
  U=\inf_{y'\not\in \cl D}\Phi(\nu(\rho),y')\,.
\end{equation}
Let $D_\eta$ represent the open $\eta$--neighbourhood of $D$ and let
\begin{equation*}
  U_\eta=\inf_{y\not\in D_\eta}\Phi(\nu(\rho),y)\,.
\end{equation*}
By \eqref{eq:30}, given $\beta>0$\,, 
one may assume that $\eta$ is small enough so that
$U_\eta\le U+\beta$\,. (One can
assume that $U<\infty$\,.) Furthermore, given $y'^{(n)}\in B_\eta$\,,
there exist $\by^{(n)}$ and $t^{(n)}$
 such that $\by^{(n)}(0)=y'^{(n)}$\,, $\by^{(n)}(t^{(n)})\not\in D_\eta$ and
 $I^\ast_{y'^{(n)}}(\by^{(n)})\le U_\eta+\beta$\,.
(Note that one can get from $y'^{(n)}$ to $\nu(\rho)$ at an arbitrarily small
cost by following the solution of \eqref{eq:27}.)  

It is shown next that the sequence $t^{(n)}$ can be chosen bounded.
Firstly, $t^{(n)}$ can be chosen as the smallest $t$ with
$\by^{(n)}(t)\not\in D_\eta$\,.
 Let $s^{(n)}$ be the last time $t$
such that $\by^{(n)}(t)\in \cl\! B_\eta$\,. Then,
for $t\in[s^{(n)},t^{(n)}]$\,, the function
$\by^{(n)}(t)$ takes values in
the closed set $\cl\! D_\eta\setminus B_\eta$\,.  Let $T$ denote 
the maximal length of
time it takes a solution of \eqref{eq:27} with an initial point in
$\cl\! D_\eta\setminus B_\eta$ to get to
$\cl\!B_\eta$\,. If $t^{(n)}- s^{(n)}\le NT$\,, for some $N$\,, the
proof is over. Otherwise,  on denoting, for $W\subset
\C(\R_+,\R^{\abs{\Theta}})$\,,  $I^\ast(W)=\inf_{\by\in W} I^\ast(\by)$\,,
 by \eqref{eq:15},
\begin{multline*}
  I^\ast_{y'^{(n)}}(\by^{(n)})\ge
\inf_{y'\in \cl\!D_\eta\setminus B_\eta} I_{y'}^\ast(\vartheta_{s^{(n)}}\by^{(n)})=
\inf_{y'\in \cl\!D_\eta\setminus B_\eta}
\bl(I^\ast_{y'}( p_T^{-1}(p_T(\vartheta_{s^{(n)}}\by^{(n)}))) +
I^\ast_{\by ^{(n)}(s^{(n)}+T)}(\vartheta_{s^{(n)}+T}\by^{(n)})\br)
\\\ge \inf_{y'\in \cl\!D_\eta\setminus B_\eta}
I^\ast_{y'}(p_T^{-1}(p_T(\vartheta_{s^{(n)}}\by^{(n)}))) +
\inf_{y'\in \cl\!D_\eta\setminus B_\eta}
I^\ast_{y'}(\vartheta_{s^{(n)}+T}\by^{(n)})\\
=\inf_{y'\in \cl\!D_\eta\setminus B_\eta}
I^\ast_{y'}(p_T^{-1}(p_T(\vartheta_{s^{(n)}}\by^{(n)}))) +
\inf_{y'\in \cl\!D_\eta\setminus B_\eta}
(I^\ast_{y'}(p_T^{-1}(p_T\vartheta_{s^{(n)}+T}\by^{(n)}))+
I^\ast_{\by ^{(n)}(s^{(n)}+2T)}(\vartheta_{s^{(n)}+2T}\by^{(n)}))\\\ge
\inf_{y'\in \cl\!D_\eta\setminus B_\eta}
I^\ast_{y'}(p_T^{-1}(p_T(\vartheta_{s^{(n)}}\by^{(n)}))) +
\inf_{y'\in \cl\!D_\eta\setminus B_\eta}
I^\ast_{y'}(p_T^{-1}(p_T\vartheta_{s^{(n)}+T}\by^{(n)}))\\
+\inf_{y'\in \cl\!D_\eta\setminus B_\eta}
I^\ast_{y'}(\vartheta_{s^{(n)}+2T}\by^{(n)})\,.
\end{multline*}
Continuing on yields, for arbitrary $N\in\N$ such that
$t^{(n)}-s^{(n)}\ge NT$\,,
\begin{multline*}
    I^\ast_{y'^{(n)}}(\by^{(n)})\ge\sum_{m=0}^N
\inf_{y'\in\cl\! D_\eta\setminus B_\eta}
I^\ast_{y'}(p_T^{-1}(p_T\vartheta_{s^{(n)}+mT}\by^{(n)}))\ge
N
\inf_{y'\in\cl\! D_\eta\setminus B_\eta}
\inf_{\substack{\by':\,\by'(t)\in\cl\! D_\eta\setminus B_\eta\\
\text{ for all }t\in[0,T]}}
I^\ast_{y'}(\by')\,.
\end{multline*}
The latter infimum is positive because no solution of \eqref{eq:27}
 belongs to the set
$\{\by':\,\by'(t)\in\cl\! D_\eta\setminus B_\eta
\text{ for all }t\in[0,T]\}$\,.
It follows that the values of $N$ have to be bounded, so, the
$t^{(n)}-s^{(n)}$ have to be bounded. 

The sequence $s^{(n)}$ can be assumed bounded. Indeed, one can change $\by^{(n)}$ by
replacing the piece of $\by^{(n)}$ on 
$[0,s^{(n)}]$ with a straight line segment connecting $y'^{(n)}$ and
$\by^{(n)}(s^{(n)})$\,. The modified trajectory
is given by $\tilde \by^{(n)}(t)=y'^{(n)}+
t(\by^{(n)}(s^{(n)})-y'^{(n)})/\abs{\by^{(n)}(s^{(n)})-y'^{(n)}}$\,, for
$t\in[0,
\abs{\by^{(n)}(s^{(n)})-y'^{(n)}}]$ and 
$\tilde \by^{(n)}(t)= \by^{(n)}(t-\abs{\by^{(n)}(s^{(n)})-y'^{(n)}}+
s^{(n)})$\,, for
$t\ge
\abs{\by^{(n)}(s^{(n)})-y'^{(n)}}$\,.
The last time $\tilde\by^{(n)}$ visits $\cl\!B_\eta$ is
 $\tilde s^{(n)}=\abs{\by^{(n)}(s^{(n)})-y'^{(n)}}$\,. The
$\tilde s^{(n)}$ are thus bounded.
With $\tilde t^{(n)}=t^{(n)}-
s^{(n)}+\abs{\by^{(n)}(s^{(n)})-y'^{(n)}}$\,,
$\tilde t^{(n)}$ is the smallest $t$ with
$\tilde\by^{(n)}(t)\not\in D_\eta$\,.
 In addition, it is possible to choose $\eta$ small enough
to ensure that 
 $I^\ast_{y'^{(n)}}(\tilde\by^{(n)})\le
 I^\ast_{y'^{(n)}}(\by^{(n)})+\beta$\,.

Since $y'^{(n)}(t^{(n)})\notin D_\eta$\,,
 $Y^{(n)}(t^{(n)})\notin D$ provided 
$\abs{Y^{(n)}(t^{(n)})-\by^{(n)}(t^{(n)})}<\eta$ so that
 \begin{equation*}
   P_{y'^{(n)}}(\tau^{(n)}\le t^{(n)})\ge P_{y'^{(n)}}(\sup_{s\le t^{(n)}}\abs{Y^{(n)}(s)-\by^{(n)}(s)}<\eta)\,.
 \end{equation*}
Denote $D^{(n)}=D\cap\mathbb S^{(n)}_\Theta$ and
$B^{(n)}_r=B_r\cap\mathbb S^{(n)}_\Theta$\,. 
Assuming that $t^{(n)}\le T_2$\,, that $y'^{(n)}\in B^{(n)}_\eta$\,,
 that $y'^{(n)}\to y'$\,,  that
$t^{(n)}\to \hat t$  and that $\by^{(n)}\to\hat\by$ yield
\begin{multline*}
  \liminf_{n\to\infty}\frac{1}{n}\,\ln P_{y'^{(n)}}(\tau^{(n)}\le T_2)
\ge\liminf_{n\to\infty}\frac{1}{n}\,\ln  P_{y'^{(n)}}(\tau^{(n)}\le
  t^{(n)})\\
\ge \liminf_{n\to\infty}\frac{1}{n}\,\ln
P_{y'^{(n)}}(\sup_{s\le t^{(n)}}\abs{Y^{(n)}(s)-\by^{(n)}(s)}<\eta)
\ge-\inf\{I^\ast_{y',\hat t}( \by):\,\sup_{s\le
  \hat t}\abs{\by(s)-\hat\by(s)}<\eta\} \\
\ge -I^\ast_{y',\hat t}(\hat \by)\ge -I^\ast_{y'}(\hat \by)\ge
-U_\eta-2\beta
\ge -U-3\beta\,.
\end{multline*}
Since, for arbitrary $y'\in D^{(n)}$\,, 
$P_{y'}(\tau^{(n)}\le T_1+ T_2)\ge
P_{y'}(\sigma^{(n)}\le T_1)\inf_{y''\in
  B^{(n)}_\eta}P_{y''}(\tau^{(n)}\le T_2)$\,,  the argument of the proof
of \eqref{eq:23} yields
\begin{equation*}
  \liminf_{n\to\infty}\frac{1}{n}\,\ln\inf_{y'\in D^{(n)}}
 P_{y'}(\tau^{(n)}\le T_3)\ge -U\,,
\end{equation*}
where 
 $T_3=T_1+T_2$\,.
Thus, for  all $\beta>0$\,, 
\begin{equation}
  \label{eq:76}
\sup_{y'\in D^{(n)}}
  P_{y'}(\tau^{(n)}> T_3)\le 1-e^{n(-U-\beta)}\,,
\end{equation}
 provided $n$ is great enough.
By the Markov property, for $\ell \in \N$\,,
\begin{multline}
  \label{eq:81}
    P_{y'}(\tau^{(n)}>\ell T_3|\tau^{(n)}>(\ell-1)T_3)\\
=P_{y'}(Y^{(n)}(t)\in D\,, t\in[(\ell-1)T_3,\ell T_3]
|Y^{(n)}(t)\in D\,,t\in[0,(\ell-1)T_3])
\\=P_{y'}(Y^{(n)}(t)\in D\,, t\in[0, T_3]
|Y^{(n)}(0)\in D)\le \sup_{y''\in D^{(n)}}P_{y''}(\tau^{(n)}>T_3)\,,
\end{multline}
which implies that
\begin{equation*}
  \sup_{y'\in D^{(n)}}P_{y'}(\tau^{(n)}>\ell T_3)\le
\sup_{y'\in  D^{(n)}}P_{y'}(\tau^{(n)}> T_3)^\ell\,.
\end{equation*}
By \eqref{eq:76},
\begin{equation*}
  \sup_{y'\in D^{(n)}}  P_{y'}(\tau^{(n)}>\ell T_3)\le (1-e^{n(-U-\beta)})^\ell\,.
\end{equation*}
Therefore,
\begin{multline*}
    E_{y^{(n)}}(\tau^{(n)})^m=
m\int_0^\infty u^{m-1}P_{y^{(n)}}(\tau^{(n)}>u)\,du \le 
mT_3\sum_{\ell=0}^\infty((\ell+1)T_3)^{m-1}
P_{y^{(n)}}(\tau^{(n)}>\ell T_3)\\
\le mT_3^m\sum_{\ell=0}^\infty(\ell+1)^{m-1} (1-e^{n(-U-\beta)})^\ell
%= mT_3^m(1-e^{n(-U-\beta)})^{-2}\sum_{\ell=1}^\infty\ell^{m-1}
%(1-e^{n(-U-\beta)})^{\ell+1}\le\\
\le mT_3^m(1-e^{n(-U-\beta)})^{-2}
\int_0^\infty u^{m-1} (1-e^{n(-U-\beta)})^u\,du
\\
= \frac{m!T_3^m(1-e^{n(-U-\beta)})^{-2}}{(-\ln(1-e^{n(-U-\beta)}))^m}
\le m!T_3^m(1-e^{n(-U-\beta)})^{-2}e^{mn(U+\beta)}
\end{multline*}
proving \eqref{eq:9}.

Let $\sigma^{(n)}_0=0$ and, for $i\in \Z_+$\,,
\begin{align*}
  \tau^{(n)}_{i}=\inf\{t>\sigma^{(n)}_{i}:\,Y^{(n)}(t)\in B_{\eta}\}
\intertext{ and }
  \sigma^{(n)}_{i+1}=\inf\{t>\tau^{(n)}_{i}:\,Y^{(n)}(t)\not\in B_{2\eta}\}\,.
\end{align*}
One has that
\begin{multline}
  \label{eq:73}
  P_{y^{(n)}}(\tau^{(n)}<e^{n(U-3\kappa)})=\sum_{i=0}^\infty
  P_{y^{(n)}}(\sigma^{(n)}_{i}\le\tau^{(n)}<\tau^{(n)}_{i}\,,
  \tau^{(n)}<e^{n(U-3\kappa)})\\
\le \sum_{i=0}^{\lfloor e^{n(U-2\kappa)}\rfloor}
  P_{y^{(n)}}(\sigma^{(n)}_{i}\le\tau^{(n)}<\tau^{(n)}_{i})
+P_{y^{(n)}}(  \tau^{(n)}_{\lfloor
  e^{n(U-2\kappa)}\rfloor}<e^{n(U-3\kappa)})\\
=P_{y^{(n)}}(\tau^{(n)}<\tau^{(n)}_{0})+
 \sum_{i=1}^{\lfloor e^{n(U-2\kappa)}\rfloor}
  E_{y^{(n)}}(\ind_{\{\sigma^{(n)}_{i}\le\tau^{(n)}\}}P_{y^{(n)}}(
\tau^{(n)}<\tau^{(n)}_{i}|\mathcal{F}^{(n)}(\sigma^{(n)}_{i})))\\
+P_{y^{(n)}}\bl( \frac{ \tau^{(n)}_{\lfloor
    e^{n(U-2\kappa)}\rfloor}}{e^{n(U-2\kappa)}}
<e^{-n\kappa}\br)\,.
\end{multline}
Since 
$\tau^{(n)}=\inf\{t\ge \sigma^{(n)}_{i}:\,Y^{(n)}(t)\not\in D\}$
and $\tau_i^{(n)}=\inf\{t\ge \sigma^{(n)}_{i}:\,Y^{(n)}(t)\in
B_\eta\}$
on the event $\{\sigma^{(n)}_{i}\le\tau^{(n)}\}$\,,
by the strong Markov property, for $i\in\N$\,, $\ell\in \N$\,, and $n$
great enough,
\begin{multline}
  \label{eq:77}
  \ind_{\{\sigma^{(n)}_{i}\le\tau^{(n)}\}}
P_{y^{(n)}}(
\tau^{(n)}<\tau^{(n)}_{i}|\mathcal{F}^{(n)}(\sigma^{(n)}_{i}))=
  \ind_{\{\sigma^{(n)}_{i}\le\tau^{(n)}\}}P_{Y^{(n)}(\sigma^{(n)}_{i})}(
\tau^{(n)}<\tau^{(n)}_{0})\\
\le \sup_{y'\in B^{(n)}_{3\eta}}P_{y'}(
\tau^{(n)}<\tau^{(n)}_{0})
\le \sup_{y'\in B^{(n)}_{3\eta}}P_{y'}(
\tau^{(n)}_{0}>\ell T_1)+\sup_{y'\in B^{(n)}_{3\eta}}P_{y'}(\tau^{(n)}\le\ell T_1)\,.
\end{multline}
For arbitrary $\tilde y^{(n)}\in B^{(n)}_{3\eta}$ converging to some $\tilde y$\,, 
$P_{\tilde y^{(n)}}(\tau_0^{(n)}\le T_1)\to1 $\,, as $n\to\infty$\,. 
Moreover, with $\tilde \by$
solving \eqref{eq:27}  for $\tilde\by(0)=\tilde y$
and with $\tilde t_1$ being the length of   time it takes $\tilde\by$
to reach $\cl B_{\eta/2}$\,,
\begin{equation*}
    P_{\tilde y^{(n)}}(\tau_0^{(n)}> T_1)\le
P_{\tilde y^{(n)}}(\tau_0^{(n)}> \tilde t_1)\le
P_{\tilde y^{(n)}}(Y^{(n)}(\tilde t_1)\not\in B_{\eta}) 
\le P_{\tilde y^{(n)}}(\abs{Y^{(n)}(\tilde t_1)-\tilde \by(\tilde t_1)}>\frac{\eta}{2})
\end{equation*}
so that
\begin{equation}
  \label{eq:83}
  \limsup_{n\to\infty}\frac{1}{n}\,\ln
P_{\tilde y^{(n)}}(\tau_0^{(n)}> T_1)\le -\inf (I^\ast_{\tilde y}(\by'):\,
\abs{\by'(\tilde t_1)-\tilde \by(\tilde t_1)}\ge\frac{\eta}{2}
)<0\,.
\end{equation}
It follows that, for some $\chi>0$ and  all $n$ great enough,
\begin{equation*}
  \sup_{y'\in B^{(n)}_{3\eta}}
P_{y'}(\tau_0^{(n)}> T_1)\le e^{-n\chi}\,.
\end{equation*}
For $\ell\in\N$\,, in analogy with \eqref{eq:81},
\begin{equation*}
 \sup_{y'\in B_{3\eta}^{(n)}} P_{y'}(\tau_0^{(n)}> \ell T_1|\tau_1^{(n)}> (\ell-1) T_1)
\le\sup_{y'\in B_{3\eta}^{(n)}}  P_{y'}(\tau_0^{(n)}> T_1)
\end{equation*}
so that
\begin{equation}
  \label{eq:87}
  \sup_{y'\in B^{(n)}_{3\eta}} P_{y'}(\tau_0^{(n)}> \ell T_1)\le e^{-n\ell \chi}\,.
\end{equation}
The second term on the rightmost side  of \eqref{eq:77} is dealt with next.
Let $\breve y^{(n)}\in B^{(n)}_{3\eta}$ converge to $\breve y\in \cl B_{3\eta}$\,.
Then, for some $\breve\by\in\{\by:\,\by(0)=
\breve y\,,\by(t)\not \in D\text{ for some
}t\in[0,\ell T_1]\}$\,, the latter set being closed and denoted by $F$\,,
\begin{multline}
  \label{eq:82}
  \limsup_{n\to\infty}
\frac{1}{n}\,\ln P_{\breve y^{(n)}}(\tau^{(n)}\le\ell T_1)=
  \limsup_{n\to\infty}
\frac{1}{n}\,\ln P_{\breve y^{(n)}}(Y^{(n)}(t)\not \in D\text{ for some
}t\le\ell T_1)\\
\le-\inf_{\by\in F}I^\ast_{\breve y}(\by)=-I^\ast_{\breve y}(\breve\by)\,.
\end{multline}
Now, if $\check\by$ with $\check \by(0)=\nu(\rho)$
is obtained from $\breve\by$ by inserting a straight line segment joining
points $\nu(\rho)$ and $\breve\by(0)$\,, then, provided $\eta$ is
small enough, $I^\ast_{\nu(\rho)}(\check\by)\le I^\ast_{\breve
  y}(\breve\by)+\kappa/2$ so that, for all $n$ great enough,
\begin{equation}
  \label{eq:3}
      \limsup_{n\to\infty}
\frac{1}{n}\,\ln P_{\breve y^{(n)}}(\tau^{(n)}\le\ell T_1)
  \le
-\inf_{y' \notin D}\Phi(\nu(\rho),y')+\frac{\kappa}{2}\,.
\end{equation}
 Thus, for all $n$ great enough,
\begin{equation}
  \label{eq:88}
  \sup_{y'\in B^{(n)}_{3\eta}}P_{y'}(\tau^{(n)}\le\ell T_1)
\le e^{-n(U-\kappa)}\,.
\end{equation}
 By \eqref{eq:77}, \eqref{eq:87} and \eqref{eq:88}, for $i\in\N$\,,
 choosing $\ell$
judiciously, for $n$ great,
\begin{equation}
  \label{eq:89}
  \ind_{\{\sigma^{(n)}_{i}\le\tau^{(n)}\}}    P_{y^{(n)}}(
\tau^{(n)}<\tau^{(n)}_{i}|\mathcal{F}^{(n)}(\sigma^{(n)}_{i}))\le 2e^{-n(U-\kappa)}\,.
\end{equation}
The first term on the rightmost side of \eqref{eq:73} is tackled similarly.
In analogy with \eqref{eq:83}, 
\begin{equation*}
    \limsup_{n\to\infty}\frac{1}{n}\,\ln
P_{y^{(n)}}(\tau_0^{(n)}>T_1)\le -\inf (I^\ast_{ y}(\by'):\,
\abs{\by'(\overline t_1)-\overline \by(\overline t_1)}\ge\frac{\eta}{2}
)<0\,,
\end{equation*}
where $\overline \by$
solves \eqref{eq:27}  with $\overline\by(0)= y$ and
$\overline t_1$ is the length of time  it takes $\overline\by$ to hit
$\cl B_{\eta/2}$\,.
In analogy with \eqref{eq:82} and \eqref{eq:3},
with $F'=\{\by:\,\by(t)\not \in  D\text{ for some
}t\le T_1\}$\,,
\begin{equation*}
    \limsup_{n\to\infty}
\frac{1}{n}\,\ln P_{y^{(n)}}(\tau^{(n)}\le T_1)=
  \limsup_{n\to\infty}
\frac{1}{n}\,\ln P_{y^{(n)}}(Y^{(n)}(t)\not \in D\text{ for some
}t\le T_1)
\le-\inf_{\by'\in F'}I^\ast_{ y}(\by')\,.
\end{equation*}
The set $F'$ being closed, the latter infimum is attained. On the
other hand, the solution of \eqref{eq:27} started at $y$ does not
belong to $F'$ as it does not leave $D$\,, so, the infimum is less
than zero.
Hence, $\chi>0$ can be chosen to satisfy the inequality, for all $n$
great enough,
\begin{equation*}
    P_{y^{(n)}}(\tau_0^{(n)}> T_1)\vee P_{y^{(n)}}(\tau^{(n)}\le T_1)\le e^{-n\chi}\,.
\end{equation*}
Therefore,
\begin{equation}
  \label{eq:94}
    P_{y^{(n)}}(
\tau^{(n)}<\tau^{(n)}_{0})
\le P_{y^{(n)}}(
\tau^{(n)}_{0}> T_1)+P_{y^{(n)}}(\tau^{(n)}\le T_1)\le 2e^{-n\chi}\,.
\end{equation}

By \eqref{eq:73},  \eqref{eq:89}, and \eqref{eq:94},
\begin{equation}
  \label{eq:90}
    P_{y^{(n)}}(\tau^{(n)}<e^{n(U-3\kappa)})\le
4 e^{-n\kappa}+P_{y^{(n)}}\bl(e^{-n(U-2\kappa)} \tau^{(n)}_{\lfloor
    e^{n(U-2\kappa)}\rfloor})
<e^{-n\kappa}\br)\,.
\end{equation}
Since
\begin{equation*}
  \tau^{(n)}_{\lfloor
    e^{n(U-2\kappa)}\rfloor}\ge\sum_{i=1}^{\lfloor
    e^{n(U-2\kappa)}\rfloor}(\tau^{(n)}_i-\sigma^{(n)}_{i})\wedge1\,,
\end{equation*}
by the strong Markov property,
$E_{y^{(n)}}((\tau^{(n)}_i-\sigma^{(n)}_{i})\wedge1|\mathcal{F}^{(n)}(\sigma^{(n)}_{i}))
=E_{Y^{(n)}(\sigma^{(n)}_{i})}(\tau^{(n)}_0\wedge1)\ge
\inf_{y'\in B^{(n)}_{3\eta}\setminus B^{(n)}_{2\eta}}E_{y'}(\tau^{(n)}_0\wedge1)$\,,
so, assuming $n$ is great enough,
 \begin{multline*}
  P_{y^{(n)}}\bl(e^{-n(U-2\kappa)}  \tau^{(n)}_{\lfloor
    e^{n(U-2\kappa)}\rfloor}\le e^{-n\kappa}\br)\le
P_{y^{(n)}}\bl( e^{-n(U-2\kappa)} \sum_{i=1}^{\lfloor
    e^{n(U-2\kappa)}\rfloor}(\tau^{(n)}_i-\sigma^{(n)}_{i})\wedge1\le
  e^{-n\kappa}\br)
\\=
P_{y^{(n)}}\bl( e^{-n(U-2\kappa)} \sum_{i=1}^{\lfloor
    e^{n(U-2\kappa)}\rfloor}\bl((\tau^{(n)}_i-\sigma^{(n)}_{i})\wedge1
-E_{y^{(n)}}((\tau^{(n)}_i-\sigma^{(n)}_{i})\wedge1)\br)\\\le e^{-n\kappa}-
e^{-n(U-2\kappa)}\sum_{i=1}^{\lfloor
    e^{n(U-2\kappa)}\rfloor}E_{y^{(n)}}((\tau^{(n)}_i-\sigma^{(n)}_{i})\wedge1)\br)
\\\le 
P_{y^{(n)}}\bl( e^{-n(U-2\kappa)} \sum_{i=1}^{\lfloor
    e^{n(U-2\kappa)}\rfloor}\bl((\tau^{(n)}_i-\sigma^{(n)}_{i})\wedge1
-E_{y^{(n)}}((\tau^{(n)}_i-\sigma^{(n)}_{i})\wedge1)\br)\\\le e^{-n\kappa}-
\frac{1}{2}\inf_{y'\in B^{(n)}_{3\eta}\setminus B^{(n)}_{2\eta}}E_{y'}(\tau^{(n)}_0\wedge1)
\br)\,.
\end{multline*}
Let $T'>0$ denote the infimum of time lengths it takes a solution of
\eqref{eq:27} to get from a point in $B_{3\eta}\setminus B_{2\eta}$ to a point in
$B_{\eta}$\,. Since $Y^{(n)}$ started at point $y'^{(n)}\in
B_{3\eta}^{(n)}\setminus B_{2\eta}^{(n)}$ such that $y'^{(n)}\to y'$ converges to the solution of \eqref{eq:27}
started at $y'$ locally uniformly, $\tau^{(n)}_0$ is greater than
$T'/2$ with great $P_{y'^{(n)}}$--probability\,, 
for $n$ great enough. Therefore,
$\liminf_{n\to\infty}E_{y'^{(n)}}(\tau^{(n)}_0\wedge1)\ge (T'/2)\wedge1$\,.
Hence, $e^{-n\kappa}-
(1/2)\inf_{y'\in B^{(n)}_{3\eta}\setminus B^{(n)}_{2\eta}}E_{y'}(\tau^{(n)}_0\wedge1)<0$\,, for all
$n$ great enough, so that, on recalling that
 the $\tau^{(n)}_i-\sigma^{(n)}_{i}$\,, for $i\in\N\,,$ are
independent, by Chebyshev's inequality,
\begin{multline*}
  P_{y^{(n)}}\bl( e^{-n(U-2\kappa)} \sum_{i=1}^{\lfloor
    e^{n(U-2\kappa)}\rfloor}\bl((\tau^{(n)}_i-\sigma^{(n)}_{i})\wedge1
-E_{y^{(n)}}((\tau^{(n)}_i-\sigma^{(n)}_{i})\wedge1)\br)\le e^{-n\kappa}-
\frac{1}{2}\,\inf_{y'\in B^{(n)}_{3\eta}\setminus B^{(n)}_{2\eta}}E_{y'}(\tau^{(n)}_0\wedge1)
\br)\\\le
P_{y^{(n)}}\bl( e^{-n(U-2\kappa)}\abs{ \sum_{i=1}^{\lfloor
    e^{n(U-2\kappa)}\rfloor}\bl((\tau^{(n)}_i-\sigma^{(n)}_{i})\wedge1
-E_{y^{(n)}}((\tau^{(n)}_i-\sigma^{(n)}_{i})\wedge1)\br)}\ge \frac{(T'/2)\wedge1}{2}- e^{-n\kappa}
\br)\\\le \frac{e^{-2n(U-2\kappa)}}{(((T'/2)\wedge1)/2- e^{-n\kappa})^2}\,
\sum_{i=1}^{\lfloor
    e^{n(U-2\kappa)}\rfloor}\text{Var}_{y^{(n)}}((\tau^{(n)}_i-\sigma^{(n)}_{i})\wedge1)
\le \frac{e^{-n(U-2\kappa)}}{(((T'/2)\wedge1)/2- e^{-n\kappa})^2}\,.
\end{multline*}
Thus, the righthand side of \eqref{eq:90} converges to $0$ so that
\eqref{eq:9'} has been proved.

\def\cprime{$'$} \def\cprime{$'$} \def\cprime{$'$} \def\cprime{$'$}
  \def\cprime{$'$} \def\polhk#1{\setbox0=\hbox{#1}{\ooalign{\hidewidth
  \lower1.5ex\hbox{`}\hidewidth\crcr\unhbox0}}} \def\cprime{$'$}
  \def\cprime{$'$} \def\cprime{$'$} \def\cprime{$'$} \def\cprime{$'$}
  \def\cprime{$'$}

\end{document}